\documentclass[a4paper, 11pt]{amsart}

\pdfpageheight\paperheight 

\usepackage[english]{babel} 
\usepackage[utf8]{inputenc} 
\usepackage[T1]{fontenc}
\usepackage[a4paper, margin=1.2in]{geometry}
\usepackage{xcolor}
\usepackage[pdftex]{hyperref}
\hypersetup{
    colorlinks,
    linkcolor={red!50!black},
    citecolor={blue!50!black},
    urlcolor={blue!80!black},
   }
\usepackage[textsize=tiny]{todonotes}
\usepackage{tikz,pgfplots}
\usepackage{graphicx}
\usepackage{subfig}
\usepackage{float}
\usepackage{lmodern,amsmath,amssymb,graphicx,amsthm,mathrsfs,mathtools}
\usepackage{enumitem}
\usepackage{varwidth}
\usepackage{tasks}
\usepackage{nicematrix}
\usepackage{amsthm}

\mathtoolsset{showonlyrefs}

\DeclareMathOperator{\rank}{rank}
\DeclareMathOperator{\spn}{span}

\newcommand{\R}{\mathbb{R}}
\newcommand{\N}{\mathbb{N}}

\newcommand{\h}{\mathbf{h}}

\newcommand{\Vect}{\operatorname{Vec}}

\newcommand{\tnr}{\operatorname{TNR}}

\newcommand{\scal}[2]{\langle #1, #2 \rangle}

\newtheorem{theorem}{Theorem}[section]
\newtheorem{lemma}[theorem]{Lemma}
\newtheorem{proposition}[theorem]{Proposition}
\newtheorem{corollary}[theorem]{Corollary}

\theoremstyle{theorem}
\newtheorem*{question}{Question}

\theoremstyle{definition}
\newtheorem{definition}[theorem]{Definition}

\newtheorem*{notation}{Notation}

\theoremstyle{remark}
\newtheorem{remark}[theorem]{Remark}

\usepackage[numbers]{natbib}

\begin{document}

\title[Branching of regular abnormal geodesics]{Branching of regular abnormal geodesics in sub-Riemannian manifolds of rank two}

\author{Luca Rizzi}
\address[Rizzi]{SISSA, via Bonomea, 265 - 34136 Trieste - Italy}
\email{\href{mailto:lrizzi@sissa.it}{lrizzi@sissa.it}}

\author{Mario Sigalotti}
\address[Sigalotti]{Sorbonne Université, Université Paris Cité, CNRS, Inria, Laboratoire Jacques Louis-Lions, Paris, France}
\email{\href{mailto:mario.sigalotti@inria.fr}{mario.sigalotti@inria.fr}}

\author{Nicolò Tedesco}
\address[Tedesco]{SISSA, via Bonomea, 265 - 34136 Trieste - Italy and Sorbonne Université, Inria, Université de Paris, Laboratoire Jacques-Louis Lions, Paris-France}
\email{\href{mailto:ntedesco@sissa.it}{ntedesco@sissa.it}}


\date{\today}

\begin{abstract}
 We study branching of regular abnormal geodesics in sub-Riemannian manifolds of rank two. As an application, we provide examples of two new phenomena: continuous families of strictly abnormal branching geodesics, and geodesics with an accumulation of countably many branching points.
\end{abstract}

\maketitle

\tableofcontents

\section{Introduction}
Recently, branching of geodesics has attracted growing attention, driven in part by the difficulties it introduces in optimal transport problems within metric spaces and spaces with synthetic curvature bounds (see \cite{Cav-Hues, Cav-Mon, gigli, Rajala-Sturm} and references therein).
Although there exist many classes of non-branching spaces, including Riemannian manifolds, it is possible to find examples of branching metric spaces,  such as graphs or $\R^n$ with non-strictly convex norms \cite{BBI}, sub-Finsler Lie groups \cite{donne2025} and sub-Riemannian manifolds, as we now discuss. The first example of branching geodesics in a sub-Riemannian manifold was given in \cite{Mietton-Rizzi}, where the authors proved that normal geodesics can branch if the structure is not real-analytic, providing a characterization of branching of normal geodesics. In particular, the branching occurs in families parametrized smoothly by finitely many parameters.
This excludes branching of finite or countable sets of geodesics. In \cite{Mietton-Rizzi}, the problem was also raised of finding examples of branching of strictly abnormal geodesics. A first answer to this problem was given in  \cite{rossi2026interior}, where the authors provided an example of branching into two 
(non-smooth) strictly abnormal geodesics in a real-analytic sub-Riemannian manifold.

In this paper, we study a relevant class of abnormal geodesics in rank-two sub-Rieman\-nian manifolds, called \emph{regular} \cite{liu-sus} or \emph{nice} \cite{ABB}. The core features of regular abnormal geodesics are that, analogously to normal ones, they are projections of integral lines of a Hamiltonian vector field, and that their  sufficiently short  arcs are length-minimizing \cite{liu-sus,ABB,Agr-Sar95}. 
 This permits us to adapt the arguments of \cite{Mietton-Rizzi}, obtaining a characterization of their branching times (Theorem~\ref{th:abn_br}). A consequence of our analysis is that branching of a regular abnormal geodesic occurs in \emph{continuous} families. This is then a distinct phenomenon from the \emph{discrete} branching of abnormal geodesics observed in \cite{rossi2026interior}, consequence of branching of the corresponding lifts, which is not possible for regular abnormal ones. Another related remark is that in the real-analytic case branching of regular abnormal curves cannot occur, as is the case for normal geodesics.

The characterization of branching times for regular abnormal geodesics given in Theorem~\ref{th:abn_br} is formally analogous to the one given in \cite{Mietton-Rizzi} for normal geodesics. Namely, we prove that a regular abnormal geodesic $\gamma:[0,T]\to M$ has a regular abnormal branching at a time $\tau$ if and only if the dimension of the space of initial covectors corresponding to abnormal lifts of $\gamma|_{[0,t]}$ loses dimension at $t=\tau$. When $\gamma$ is both normal and regular abnormal, this implies that branching into normal geodesics and into regular abnormal ones occur at the same times. 

In addition, we provide criteria to establish whether the regular abnormal branching geodesics are strictly abnormal, giving two examples that are different in nature. The first example  (Section~\ref{sec:exp1}) consists of a family of geodesics in which the common subsegment (i.e. the subsegment where they coincide) is itself strictly abnormal. In the second example (Section~\ref{sec:exp2}), we consider a \emph{reference} geodesic that is both normal and regular abnormal, that splits at the same time into two distinct families of branching geodesics that are, respectively, normal and regular abnormal, proving that the regular abnormal ones are also strictly abnormal.    

In Section \ref{sect:local_branching}, we give a more general notion of branching of geodesics. In this broader setting, we provide examples of geodesics with an arbitrarily large number of branchings, which can even accumulate at some point. This is a novel phenomenon, which was forbidden in the previous frameworks. 

In Section~\ref{sect:opt_arcs} we  prove a stability 
property for the length minimality of short arcs of 
regular abnormal geodesics, generalizing the classical result of \cite{liu-sus}. As an application, we prove that if a regular abnormal geodesic has a regular abnormal branching at time $\tau$, for any \emph{compact} family of branching geodesics there exists a common time interval $[\tau-\rho, \tau+\rho]$ such that the restrictions to such an interval of all the geodesics in that family are length minimizers. \\

\noindent
\textbf{Acknowledgments.}
This project has received funding from the European Research Council (ERC) under the European Union's Horizon 2020 research and innovation programme (grant agreement GEOSUB, No.\ 945655). 
The authors acknowledge the support of INdAM, and of the IRP project GEOSUBMAN by INSMI-CNRS. We also thank A.~Agrachev and D.~Barilari for helpful discussions.

\section{Preliminaries}
\label{sec:prel}
We introduce the setting and notations, referring to \cite{ABB} for details. Let $M$ be a smooth manifold of dimension $n$ and denote by $\Vect(M)$ the space of smooth vector fields on $M$. A sub-Riemannian structure on $M$ is a pair $(\Delta,g)$, where $\Delta$ is a totally nonholonomic smooth distribution of constant rank and $g$ is a Riemannian metric on $\Delta$. One can find a \emph{generating family} $\{X_1,\dots,X_m\} \subset \Vect(M)$, tangent to $\Delta$, such that 
\begin{equation}
    g_q(v,v)= \min \left \{ \sum_{i=1}^m u_i^2 \ \,\Bigg|\,\sum_{i=1}^m u_iX_i(q) = v\right \} \quad \forall v \in \Delta(q), \, q\in M.
\end{equation}

A \emph{horizontal curve} on $M$ is an absolutely continuous curve $\gamma:[0,T] \to M$ such that $\dot{\gamma}(t) \in \Delta(\gamma(t))$ for a.e. $t\in [0,T]$ and there exists $u\in L^2([0,T],\R^m)$ verifying
\begin{equation}
    \dot{\gamma}(t) = \sum_{i=1}^m u_i(t) X_i(\gamma(t)) \quad \text{for a.e. } t\in[0,T].
\end{equation}
The control $u$ may not be unique, but there exists a unique \emph{minimal} control that verifies
\begin{equation}
\label{eq:min_cont}
    g_{\gamma(t)}(\dot{\gamma}(t), \dot{\gamma}(t)) = |u(t)|^2 \quad \text{for a.e. } t \in [0,T].
\end{equation}
The length of a horizontal curve is defined as
\begin{equation}
    L(\gamma) = \int_0^T \sqrt{g_{\gamma(t)}(\dot{\gamma}(t),\dot{\gamma}(t))} \  dt.
\end{equation}
 Any horizontal curve is the reparametrization of a unit speed one, namely 
the one whose minimal control satisfies $|u(t)| = 1$ almost everywhere. The \emph{Carnot--Carathéodory} distance between $p,q\in M$ is defined as
\begin{equation}
    d(p,q)= \inf \ \{L(\gamma) \mid \gamma(0)=p, \ \gamma(T)=q, \ \gamma \ \text{horizontal} \}.
\end{equation}
A horizontal curve 
$\gamma:[0,T] \to M$ is a \emph{length minimizer} if it verifies $L(\gamma)=d(\gamma(0),\gamma(T))$, while it is called \emph{geodesic} if it is parametrized by constant speed and short arcs of $\gamma$ are length minimizers. This means that for every $t \in [0,T]$ there exists $\varepsilon>0$ such that $\gamma|_{[t-\varepsilon, t+\varepsilon] \cap [0,T]}$ is a length minimizer.

The \emph{energy} of a horizontal curve is defined as
\begin{equation}
    J(\gamma) = \frac{1}{2} \int_0^T g_{\gamma(t)}(\dot{\gamma}(t),\dot{\gamma}(t)) \  dt.
\end{equation}
A horizontal curve is an energy minimizer if and only if it is a length minimizer parametrized with constant speed. Let us now introduce some notation.

\begin{notation}
For any $X \in \Vect(M)$ we define the smooth function $h_X: T^*M \to \R$ as $h_X(\lambda)=\scal{\lambda}{X(\pi(\lambda))}$, where $\pi:T^*M \to M$ is the bundle projection and $\scal{\cdot}{\cdot}$ denotes the pairing between a covector and a tangent vector. Given a generating family $\{X_1,\dots,X_m\}$ as above, for all $i_1,\dots,i_k \in \{1,\dots,m\}, \ k \in \N$, we denote: 
\begin{equation}
    X_{i_1\dots i_k} = [X_{i_1},\dots,[X_{i_{k-1}},X_{i_k}]\dots], \quad  h_{i_1 \dots i_k}= h_{X_{i_1\dots i_k}}.
\end{equation}
Finally, for any smooth function $h:T^*M \to \R$ we denote its \emph{Hamiltonian vector field} as $\vec{h} \in \Vect(T^*M)$, which is defined by the equation $\omega(\cdot,\vec{h})=dh$, where $\omega$ is the canonical symplectic form on $T^*M$. 
\end{notation}

We recall necessary conditions for length minimizers. Fix $q \in M$, $T>0$. There exists an open set $\mathcal{U} \subset L^2([0,T], \R^m)$ such that, for all $u \in \mathcal{U}$, a solution to the Cauchy problem
\begin{equation}
    \dot{\gamma}_u(t) = \sum_{i=1}^m u_i(t) X_i(\gamma_u(t)), \quad \gamma_u(0)=q
\end{equation}
is defined on $[0,T]$. 

\begin{definition}
    The \emph{endpoint map} at $q\in M$ is the map $E_q:\mathcal{U} \to M$, $E_q(u) = \gamma_u(T)$.  
\end{definition}

If a curve $\gamma$ is a length minimizer parametrized with constant speed (equivalently, $\gamma$ is an energy minimizer) and $u$ is its minimal control, then by \eqref{eq:min_cont} $u$ is a minimizer of the $L^2$ norm on the level set $\{E_{\gamma(0)}(u)=\gamma(T)\}$. By the Lagrange multipliers rule, there exist $\lambda_T \in T_{\gamma(T)}^*M$ and $\nu \in \{0,1\}$ with $(\lambda_T,\nu) \neq 0$ such that
\begin{equation}
\label{eq:lagr}
    \lambda_T \circ D_uE_{\gamma(0)}(\cdot)= \nu \scal{u}{\cdot}. 
\end{equation}

By propagating $\lambda_T$ over $\gamma$, we obtain a lift $\lambda:[0,T] \to M$ of $\gamma$, that we call \emph{normal} $(N)$ if $\nu=1$ and \emph{abnormal} $(A)$ if $\nu=0$. More precisely, $\lambda(\cdot)$ is defined by
\begin{equation}\label{eq:extremallift}
    \lambda(t)=(P^u_{t,T})^*\lambda_T \quad \forall t \in [0,T],
\end{equation}
where $P^u_{t_1,t_2}$ denotes flow associated with the non-autonomous vector field $X_u:= u_1X_1 + \cdots+u_mX_m$ between times $t_1$ and $t_2$, and the star denotes the pull-back. Notice that the same path can admit multiple lifts, some verifying $(N)$ and some others $(A)$. 

In case $(N)$ it turns out that $u_i(t)=h_i(\lambda(t))$ for $i\in \{1,\dots,m\}$ and a.e.\ $t \in [0,T]$, and the lift is an integral line of the \emph{sub-Riemannian Hamiltonian}
\begin{equation}
    H(\lambda) := \frac{1}{2}\sum_{i=1}^m h_i(\lambda)^2,
\end{equation}
while in case $(A)$ the lift is simply a solution of \eqref{eq:extremallift}, namely
\begin{equation}
    \label{eq:pmp}
        \dot{\lambda}(t) = \sum_{i=1}^m u_i(t) \vec{h}_i(\lambda(t)) \quad \text{for a.e.} \ t \in [0,T],
\end{equation}
verifying also $h_i(\lambda(t))=0$ for $i\in \{1,\dots,m\}$ and a.e. $t \in [0,T]$.

\begin{definition}
    A horizontal path is called a \emph{normal} (resp.\ \emph{abnormal}) \emph{path} if it admits a normal (resp.\ abnormal) lift.
\end{definition}

\subsection{Branching of normal geodesics}
\label{sect:norm_branch}

We recall some results from \cite{Mietton-Rizzi}.

\begin{definition}
\label{def:n_br}
    We say that a normal geodesic $\gamma:[0,T]\to M$ \emph{has a normal branching} at time $\tau \in (0,T)$ if there exist $\varepsilon_0>0$ and a normal geodesic $\bar{\gamma}:[0,\tau+\varepsilon_0]\to M$ such that $\bar{\gamma} |_{[0,\tau]} = \gamma|_{[0,\tau]}$ and $\bar{\gamma} |_{[0,\tau+\varepsilon]} \neq \gamma|_{[0,\tau+\varepsilon]}$  for all $0<\varepsilon \le \varepsilon_0$.
\end{definition}

To recall the characterization of normal branching, let us introduce the definition of normal affine space and abnormal vector space associated with a horizontal path. 

\begin{definition}
Let $\gamma:[0,T] \to M$ be a horizontal path. For all $t\in (0,T]$, we define the (possibly empty) affine spaces
    \begin{equation}
    \label{eq:Pit}
       \mathcal{N}_t = \left\{\lambda(0) \in T^*_{\gamma(0)}{M} \;\Big|\;  \lambda(\cdot) \text{ normal lift of } \gamma|_{[0,t]} \right\},
    \end{equation}
and the vector spaces
\begin{equation}
\label{eq:defA}
    \mathcal{A}_t = \left\{ \lambda(0) \in T_{\gamma(0)}^*M \;\Big|\;  \lambda(\cdot) \text{  abnormal lift of  } \gamma|_{[0,t]} \right\}\sqcup\left\{0\right\}.
\end{equation}
If $\gamma$ is normal 
then each $\mathcal{N}_t$ is nonempty and its underlying vector space
is $\mathcal{A}_t$. We call $\mathcal{N}_t$ the \emph{normal affine space} and $\mathcal{A}_t$ the \emph{abnormal vector space} of $\gamma|_{[0,t]}$. 
\end{definition}

At each $t\in(0,T]$, $\mathcal{A}_t$ (resp.\ $\mathcal{N}_t$) is an isomorphic image via $(P_{0,t}^u)^*$ of the vector (resp.\ affine) space generated by the abnormal  (resp.\ normal) Lagrange multipliers of $\gamma|_{[0,t]}$. As such, the function $(0,T] \ni t \mapsto \dim(\mathcal{A}_t)$ is upper semi-continuous and non-increasing,  hence it is constant out of a finite set of points, where the function is left-continuous and jumps downwards.
 
Branching of normal geodesics is characterized as follows.  

\begin{theorem}[\cite{Mietton-Rizzi}]
\label{th:n_br}
    A normal geodesic $\gamma:[0,T]\to M$ has a normal branching at time $\tau \in (0,T)$ if and only if the function $t \mapsto \dim(\mathcal{A}_t)$ is discontinuous at $\tau$.
\end{theorem}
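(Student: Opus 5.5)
We prove the two implications separately. Throughout we use the Hamiltonian description of normal geodesics: every normal geodesic is $\pi\circ e^{t\vec H}(\lambda_0)$ for some initial covector $\lambda_0$, and short arcs of normal extremals are length minimizers.

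\emph{Necessity (branching implies a jump).} Suppose $\gamma$ has a normal branching at $\tau$, witnessed by $\bar\gamma$ and $\varepsilon_0$. We argue by contradiction and assume $\dim\mathcal{A}_t$ is continuous at $\tau$. Since the function is non-increasing, left-continuous and piecewise constant, this means $\mathcal{A}_{\tau}=\mathcal{A}_{\tau+\varepsilon}$ for all small $\varepsilon>0$; the inclusion $\mathcal{A}_{\tau+\varepsilon}\subset\mathcal{A}_\tau$ always holds, so equality of dimensions gives equality of spaces. The same then holds for the normal affine spaces, since $\mathcal{N}_{\tau+\varepsilon}\subset\mathcal{N}_\tau$ are nonempty affine spaces with these underlying vector spaces. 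Now take a normal lift $\bar\lambda$ of $\bar\gamma$. Its restriction to $[0,\tau]$ is a normal lift of $\gamma|_{[0,\tau]}$, so $\bar\lambda(0)\in\mathcal{N}_\tau=\mathcal{N}_{\tau+\varepsilon}$. Hence $\bar\lambda(0)$ is also the initial covector of a normal lift of $\gamma|_{[0,\tau+\varepsilon]}$. Because normal extremals are determined by their initial covector, $\bar\gamma=\pi\circ e^{t\vec H}(\bar\lambda(0))=\gamma$ on $[0,\tau+\varepsilon]$, which contradicts the definition of branching.

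\emph{Sufficiency (a jump implies branching).} Suppose $\dim\mathcal{A}_{\tau+\varepsilon}<\dim\mathcal{A}_\tau$ for all small $\varepsilon>0$.
\begin{enumerate}
\item Choose $\lambda_0\in\mathcal{N}_\tau$ with $\lambda_0\notin\mathcal{N}_{\tau+\varepsilon_1}$ for some $\varepsilon_1>0$. This is possible because, for small $\varepsilon$, $\mathcal{N}_{\tau+\varepsilon}$ is a constant proper affine subspace of $\mathcal{N}_\tau$.
\item Define $\bar\gamma(t)=\pi\circ e^{t\vec H}(\lambda_0)$. It is a normal extremal trajectory that coincides with $\gamma$ on $[0,\tau]$, since $\lambda_0$ generates a normal lift of $\gamma|_{[0,\tau]}$.
\item $\bar\gamma$ differs from $\gamma$ on every interval $[0,\tau+\varepsilon]$ with $0<\varepsilon\le\varepsilon_1$. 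Indeed, if they agreed there, then $\lambda_0$ would lie in $\mathcal{N}_{\tau+\varepsilon}$, and by the constancy of these spaces for small $\varepsilon$ also in $\mathcal{N}_{\tau+\varepsilon_1}$, contradicting the choice in step 1.
\item $\bar\gamma$ is a geodesic on $[0,\tau+\varepsilon_0]$ for some $\varepsilon_0>0$: short arcs of normal extremals are length minimizers, and $\bar\gamma$ has constant speed $\sqrt{2H(\lambda_0)}$. On $[0,\tau]$ this speed equals that of $\gamma$, which is positive.
\end{enumerate}

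\emph{Main obstacle.} The delicate points are the linear-algebraic identification of the spaces $\mathcal{N}_t$, $\mathcal{A}_t$ across different times, and the claim that constancy of dimension forces equality of spaces. The latter uses the nestedness $\mathcal{A}_{t'}\subset\mathcal{A}_t$ for $t'>t$: restricting a lift of $\gamma|_{[0,t']}$ to $[0,t]$ gives a lift of $\gamma|_{[0,t]}$ with the same initial covector.
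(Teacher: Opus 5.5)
Your proof is correct and takes essentially the same approach as the paper. The paper only cites this result from \cite{Mietton-Rizzi}, but its proof of the regular abnormal analogue, Theorem~\ref{th:abn_br}, is your argument with $e^{t\vec{H}}$ replaced by $e^{\pm t\vec{G}}$, resting on the same three facts: nestedness of the spaces of initial covectors, their constancy just to the right of $\tau$, and uniqueness of the Hamiltonian lift through a given initial covector.
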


We emphasize that, according to this characterization, branching 
of a normal geodesic corresponds to a loss of dimension of the space of abnormal lifts of its subsegments. This implies that a normal geodesic can have normal branching only a finite number of times.

\section{Regular abnormal geodesics}
\label{sect:regular}
Let $(\Delta,g)$ be a sub-Riemannian structure with $\rank(\Delta) =2$ on a smooth manifold $M$. To simplify the exposition, we suppose for now that $(\Delta,g)$ possesses a global orthonormal frame $\{X_1,X_2\} \subset \Vect(M)$, so that the control associated with any horizontal curve is unique. 
We will discuss in which sense the assumption is not restrictive 
in Section~\ref{sect:local_branching}.
At each $q \in M$, for all $m \in \N$ we define
\begin{equation}
    \Delta_m(q)=\spn\left\{ X_{i_1\dots i_m}(q) \mid i_1,\dots,i_m \in \{1,2\}\right\},
\end{equation}
and we denote by $\Delta_m^\perp(q) \subset T^*_qM$ the annihilator of $\Delta_m(q)$ for all $m \in \N$. Notice that, by definition, $\Delta_1(q)=\Delta(q)$ for all $q \in M$. We also set
\begin{equation}
    \Delta_m^\perp= \{ \lambda \in T^*M \mid \lambda \in \Delta_m^\perp(\pi(\lambda)) \} \quad \forall m \in \N.
\end{equation}
Observe that $\Delta^\perp$ is a codimension $2$ submanifold of $T^*M$, but in general $\Delta_m^\perp$ is not a submanifold for $m>1$. \\

Let $\gamma:[0,T] \to M$ be a nowhere constant abnormal path, with control $u \in L^2([0,T],\R^2)$ and let $\lambda(\cdot)$ be an abnormal lift of $\gamma$. Recall that, for all $i_1,\dots, i_m \in \{1,2\}$, it holds
\begin{equation}
    \frac{d}{dt}h_{i_1\dots i_m}(\lambda(t)) = u_1(t) h_{1i_1 \dots i_m}(\lambda(t)) + u_2(t) h_{2i_1\dots i_m}(\lambda(t)) \quad \text{for a.e. } t \in[0,T].
\end{equation}
Since $\gamma$ is abnormal, for all $t \in [0,T]$ it holds $h_1(\lambda(t))=h_2(\lambda(t) )= 0$; differentiating these equalities and using that $u(t) \neq 0$ for a.e.\ $t$, we deduce that $h_{12}(\lambda(t))\equiv 0$ (this is known as the \emph{Goh condition}), which can be rewritten as
\begin{equation}\label{eq:Goh}
    \lambda(t) \in \Delta_2^\perp \quad \forall t \in [0,T].
\end{equation}
Let us now describe the local structure of $\Delta_2^\perp$. This is, in general, only a closed subset of $\Delta^\perp$ without manifold structure, defined by the equations $h_1=h_2=h_{12}=0$. Notice that at $\lambda \in \Delta_2^\perp$ we have $\mathrm{span}\{\vec{h}_1(\lambda),\vec{h}_2(\lambda)\} \subset T_\lambda \Delta^\perp $. This implies that, as soon as $\lambda \in \Delta_2^\perp\setminus \Delta_3^\perp$, the differential $d_\lambda h_{12}$ restricted to $T_\lambda(\Delta^\perp)$ becomes nonzero, since $\scal{dh_{12}}{\vec{h}_i}=\omega(\vec{h}_i,\vec{h}_{12})=h_{i12}$ for $i=1,2$, and at least one between $h_{112}$ and $h_{212}$ does not vanish outside of $\Delta_3^\perp$. In particular, 
\begin{equation}
    \mathcal{R} := \Delta_2^\perp \setminus \Delta_3^\perp 
\end{equation}
is a submanifold of $\Delta^\perp$ of codimension $1$, that is called the \emph{regular abnormal set}.
This submanifold is naturally equipped with the smooth line bundle
\begin{equation}
    \mathrm{span}\{\vec{h}_1(\lambda),\vec{h}_2(\lambda)\}\cap T_\lambda \mathcal{R} \quad \forall \lambda \in \mathcal{R},
\end{equation}
which induces a foliation of $\mathcal{R}$. In particular, any lift of an abnormal path on $M$ that lies in $\mathcal{R}$ must be contained in one of the leaves of this foliation. Let us now define the Hamiltonian $G: \mathcal{R} \to \R$ by
\begin{equation}\label{eq:G}
    G(\lambda) = \frac{-h_{212}(\lambda)}{\sqrt{h_{212}(\lambda)^2+h_{112}(\lambda)^2}} \ h_1(\lambda) + \frac{h_{112}(\lambda)}{\sqrt{h_{212}(\lambda)^2+h_{112}(\lambda)^2}} \ h_2(\lambda),
\end{equation}
which verifies
\begin{equation}\label{eq:vecG}
    \vec{G}(\lambda) = \frac{-h_{212}(\lambda)}{\sqrt{h_{212}(\lambda)^2+h_{112}(\lambda)^2}} \ \vec{h}_1(\lambda) + \frac{h_{112}(\lambda)}{\sqrt{h_{212}(\lambda)^2+h_{112}(\lambda)^2}} \ \vec{h}_2(\lambda) \quad \forall \lambda \in \mathcal{R}.
\end{equation}
Notice that an absolutely continuous path $\lambda:[0,T] \to T^*M$ is an integral line of $\vec{G}$ if and only if $-\lambda(\cdot)$ is an integral line of $-\vec{G}$. The above defined foliation of $\mathcal{R}$ is given by the maximal integral lines of $\vec{G}$. Their projections on $M$ are smooth abnormal paths parametrized with unit speed. We refer to \cite{liu-sus} for further details on this construction. Notice that the flow of $\vec{G}$ is well-defined as long as the integral lines do not intersect $\Delta_3^\perp$. For a description of how an integral line of $\vec{G}$ can approach $\Delta_3^\perp$ in finite time, see \cite{sss5}.
\begin{definition}
\label{def:rt}
    For all $T\ge 0$ we set 
    \begin{equation}
        \mathcal{R}_T := \{ \lambda \in \mathcal{R} \mid  e^{t\vec{G}}(\lambda) \in \mathcal{R} \ \text{for all } t \in [0,T] \}.
    \end{equation}
    We also define 
    \begin{equation}
        \mathcal{D} := \bigsqcup_{T \ge 0} \{T\} \times \mathcal{R}_T.
            \end{equation}
\end{definition}
 By continuous dependence of the flow of $\vec{G}$ on initial data, $\mathcal{R}_T$ is an open subset of $\mathcal{R}$ for all $T\ge 0$, with $\mathcal{R}_0=\mathcal{R}$, and $\mathcal{D}$ is relatively open in $[0,+\infty) \times \mathcal{R}$. By definition, $\mathcal{D}$ is the largest subset of $[0,+\infty) \times \mathcal{R}$ where the map $(t,\lambda)~\mapsto~e^{t \vec{G}}(\lambda) \in \mathcal{R}$ is well-defined.

\begin{definition}
\label{def:reg_abn}
    An abnormal path $\gamma:[0,T] \to M$ is called \emph{regular} if it is the projection on $M$ of an integral line of $\vec{G}$ or $-\vec{G}$. Such lift is called a \emph{positively oriented} (resp.\ \emph{negatively oriented}) regular abnormal lift if it is an integral line of $\vec{G}$ (resp.\ $-\vec{G}$).
\end{definition}

Notice that if $\gamma$ is a regular abnormal path then $\Delta_2(\gamma(t)) \neq \Delta_3(\gamma(t))$ for all $t$.
For \emph{geodesics}, abnormal lifts must be regular as long as they remain out of $\Delta_3^\perp$, and are thus uniquely determined by their initial covector, as described in the next proposition.

\begin{proposition}\label{prop:Imax}
    Let $\gamma:[0,T]\to M$ be an abnormal geodesic parametrized with unit speed. Let $\lambda :[0,T]\to T^*M$ be an abnormal lift of $\gamma$.
    Assume that $\lambda(t_0)\in \mathcal{R}$ for some $t_0\in [0,T]$, and let $I\subset [0,T]$ be an interval containing $t_0$ such that $\lambda(t) \in \mathcal{R}$ for all $t\in I$. Then, $\lambda|_{I}$ is the unique 
abnormal lift of $\gamma|_{I}$ that passes through $\lambda(t_0)$. Furthermore, it is 
regular abnormal.
    
    \end{proposition}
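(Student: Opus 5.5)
The strategy is to show that the lift of $\gamma|_I$ solves a uniquely solvable ODE on $\mathcal{R}$, so that it is pinned down by $\lambda(t_0)$. Since the paper calls an abnormal path ``regular'' only when it is the projection of an integral line of $\vec{G}$ or $-\vec{G}$, I would first show that on $I$ the control $u$ is forced to equal $\pm$ the unit vector $(-h_{212},h_{112})/\sqrt{h_{212}^2+h_{112}^2}$ evaluated along $\lambda$, with a sign constant on $I$. Uniqueness will then follow from uniqueness of integral curves of the smooth vector fields $\pm\vec{G}$ on $\mathcal{R}$.

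\emph{Step 1 (second-order Goh).} The Goh condition \eqref{eq:Goh} gives $h_{12}(\lambda(t))\equiv0$. This function is absolutely continuous, and differentiating it gives, for a.e.\ $t\in I$,
\begin{equation}
0=\tfrac{d}{dt}h_{12}(\lambda(t)) = u_1(t)h_{112}(\lambda(t)) + u_2(t)h_{212}(\lambda(t)).
\end{equation}
Because $\lambda(t)\in\mathcal{R}$, the vector $(h_{112},h_{212})(\lambda(t))$ is nonzero. Since $|u|=1$ a.e., this forces
\begin{equation}
u(t)=\sigma(t)\,\frac{(-h_{212},h_{112})}{\sqrt{h_{112}^2+h_{212}^2}}(\lambda(t)),
\end{equation}
with $\sigma(t)\in\{\pm1\}$ for a.e.\ $t\in I$. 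Substituting into \eqref{eq:pmp} yields $\dot\lambda(t)=\sigma(t)\vec{G}(\lambda(t))$ a.e. on $I$.

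\emph{Step 2 (constancy of the sign).} This is the main obstacle: a priori $\sigma$ is only measurable and could switch sign, so that $\gamma$ runs back and forth along a leaf of the foliation. I would exclude this using minimality of $\gamma$. Both $t\mapsto\gamma(t)$ and the leaf are controlled by a single smooth unit-speed curve. Concretely, set $s(t)=\int_{t_0}^t\sigma$. Since $\vec{G}$ is smooth on $\mathcal{R}$, we get $\lambda(t)=e^{s(t)\vec{G}}(\lambda(t_0))$ by uniqueness for the autonomous ODE after reparametrization, so $\gamma(t)=\pi\circ e^{s(t)\vec{G}}(\lambda(t_0))$.

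Now suppose $\sigma$ is not a.e.\ constant on a subinterval. Then $s$ is not monotone there, and there exist $t_1<t_2$, arbitrarily close together near any point, with $s(t_1)=s(t_2)$. Hence $\gamma(t_1)=\gamma(t_2)$, while the length of $\gamma|_{[t_1,t_2]}$ is $t_2-t_1>0$. This contradicts the fact that short arcs of a geodesic are length minimizers. To get the arcs short enough, I would argue as follows. If $\sigma$ is not a.e.\ constant, then $s$ is not monotone on some interval inside any neighborhood of some point $\bar t$. Choosing $t_1<t_2$ within the minimizing neighborhood of $\bar t$ with $s(t_1)=s(t_2)$ then gives the contradiction, with the equality provided by the intermediate value theorem. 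Consequently $\sigma$ is a.e.\ constant, equal to $+1$ or $-1$, on $I$, and $\lambda|_I$ is an integral line of $\vec{G}$ or $-\vec{G}$.

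\emph{Step 3 (uniqueness and regularity).} Any other abnormal lift $\mu$ of $\gamma|_I$ with $\mu(t_0)=\lambda(t_0)$ stays in $\mathcal{R}$ near $t_0$, since $\mathcal{R}$ is relatively open in $\Delta_2^\perp$. On any subinterval containing $t_0$ where $\mu\in\mathcal{R}$, Steps 1 and 2 apply to $\mu$ with the same control $u$, so the sign is the same. Thus $\mu$ solves the same Cauchy problem for $\sigma\vec{G}$ and coincides with $\lambda$. A standard connectedness argument, using that the set where $\mu=\lambda$ is closed in $I$ and open by the previous reasoning (since $\lambda\in\mathcal{R}$ on $I$), gives $\mu=\lambda$ on all of $I$. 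Regularity is exactly the conclusion of Step 2.
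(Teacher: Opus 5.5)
Your proposal is correct and follows the paper's proof. Differentiating the Goh condition gives $u=\sigma\,\mathbf{h}(\lambda)/|\mathbf{h}(\lambda)|$ and $\dot\lambda=\sigma\vec{G}(\lambda)$ a.e.\ on $I$, and local length-minimality forces $\sigma$ to be constant. The paper delegates this last step to \cite[Lemma 22]{sss5}, whereas your argument via $\lambda(t)=e^{s(t)\vec{G}}(\lambda(t_0))$ and the resulting self-intersection $\gamma(t_1)=\gamma(t_2)$ of a short minimizing arc proves it directly.

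Your Step 3 is valid but more than is needed. Since the control $u$ of $\gamma$ is unique, every abnormal lift of $\gamma|_I$ solves the same Carath\'eodory ODE $\dot\mu=\sum_i u_i\vec{h}_i(\mu)$, so uniqueness through $\lambda(t_0)$ is automatic.
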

\begin{proof}
Let $u \in L^2([0,T],\mathbb{S}^1)$ be the control of $\gamma$. As we have shown in \eqref{eq:Goh}, if $\lambda(\cdot)$ is an abnormal lift, then $h_{12}(\lambda(t))=0$ for all $t\in[0,T]$. Differentiating this identity we find
\begin{equation}
    u_1(t)h_{112}(\lambda(t))+ u_2(t)h_{212}(\lambda(t))=0 \quad \text{ for a.e. } t \in [0,T].
\end{equation}
Let
\begin{equation}
    \h(\lambda):=(-h_{212}(\lambda),h_{112}(\lambda)) \quad \forall \lambda \in T^*M,
\end{equation} 
and note that $\lambda(t)\in \mathcal{R}$ if and only if $\lambda(t) \notin \Delta_3^\perp$, which, in turn, is equivalent to $\h(\lambda(t))\neq0$. 
At all such $t$, provided that $\gamma$ is differentiable, it holds
\begin{equation}
\label{eq:contr_ra}
    u(t)=\pm \frac{\h(\lambda(t))}{|\h(\lambda(t))|},
\end{equation}
where $\lvert \h(\lambda)\rvert = \sqrt{h_{212}(\lambda)^2 + h_{112}(\lambda)^2}$. In particular there exists a measurable function $\sigma:I \to \{-1,1\}$ such that 
\begin{equation}
    \dot{\lambda}(t) = \sigma(t) \vec{G}(\lambda(t)) \quad \text{for a.e. } t \in I.
\end{equation}
By the geodesic assumption, short arcs of $\gamma$ are length minimizers, which implies that $\sigma$ is constant (see, e.g., the argument in \cite[Lemma 22]{sss5}).
The fact that $\sigma$ is constant means that $\lambda|_{I}$ is an integral line of $\vec{G}$ or $-\vec{G}$.
\end{proof}

 \begin{remark}[Regular vs nice]
Abnormal paths having an abnormal lift lying in $\mathcal{R}$ are called \emph{nice} in \cite{ABB}. 
Accordingly, all regular abnormal paths are nice, but the converse does not hold. The above proposition implies that, for unit-speed geodesics, being nice is equivalent to being regular.
\end{remark}

The following result motivates the interest in the class of regular abnormal paths. 

\begin{theorem}[\cite{liu-sus,ABB}]
\label{pr:liu-sus}
    Every regular abnormal path is a geodesic. 
\end{theorem}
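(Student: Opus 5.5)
Given a regular abnormal path $\gamma:[0,T]\to M$, fix a time $t_0\in[0,T]$. The aim is to show that $\gamma$ restricted to a small interval $[t_0-\varepsilon,t_0+\varepsilon]\cap[0,T]$ is a length minimizer. Since $\gamma$ is parametrized with unit speed, this is what the geodesic property requires.

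\textbf{Step 1: local normal form.} Let $\lambda(\cdot)$ be the regular lift, an integral line of $\pm\vec{G}$ in $\mathcal{R}$. The key fact is $\Delta_2(\gamma(t))\neq\Delta_3(\gamma(t))$, that is, $(h_{112},h_{212})(\lambda(t))\neq 0$. By \eqref{eq:contr_ra} the control is $u=\pm\h/|\h|$, which is smooth and nonvanishing. Near $q_0=\gamma(t_0)$ I would rotate the orthonormal frame $\{X_1,X_2\}$ by a smooth angle, chosen so that $X_1$ is tangent to $\gamma$ along $\gamma$ and $\gamma$ is an integral curve of $X_1$. In this rotated frame the abnormal condition reads $u_2\equiv 0$, and the Goh/regularity condition becomes $h_{212}(\lambda)=0$ and $h_{112}(\lambda(t))\neq 0$ along the curve.

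\textbf{Step 2: second-order analysis of the endpoint map.} Work in coordinates adapted to $\gamma$. The image of $D_uE$ is the hyperplane $\ker\lambda(t_0+\varepsilon)$, and its corank is $1$ when the interval is short, because regular abnormals have abnormal corank one locally. Along the constraint set one then reduces to the quadratic form $\lambda\, D^2_uE$ restricted to $\ker D_uE$. Integrating by parts (the Goh transformation) turns this form, up to compact terms, into $\int h_{112}(\lambda(t))\,w(t)^2\,dt$, where $w=\int v_2$ is the primitive of the transverse control variation. Since $h_{112}\neq 0$ has a fixed sign, this leading part is definite. Replacing $\lambda$ by $-\lambda$ if necessary, we can take it positive definite. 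The compact remainders are of order $\varepsilon$ in the $L^2$ norm of $w$.

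\textbf{Step 3: conclusion.} Following Agrachev–Sarychev / Liu–Sussmann, the index-zero condition plus strong Legendre-type positivity on short intervals gives a rigidity statement. Every control $u+v$ with $E(u+v)=E(u)$ and $v$ small in $L^2$ is either a reparametrization of $u$ or has $\|u+v\|>\|u\|$; the latter uses that length is tangentially controlled by the $v_1$ component. To pass from $L^2$-close competitors to all competitors, I would use a compactness argument. Short minimizers between nearby points stay in a small neighborhood, and one shows their controls converge to $u$ after reparametrization. This works because in the normal form any curve shorter than $\gamma$ joining its endpoints must remain close to the $X_1$-integral line; this can be seen via a calibration-like function, namely the coordinate $x_1$ along $\gamma$, whose horizontal gradient has norm at most $1$. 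A cleaner route I would try first is exactly this: build coordinates $(x_1,\dots,x_n)$ in which the abnormal covector field is $\lambda=dx_n$-type, with $h_{112}>0$, and show directly that any horizontal curve from $\gamma(t_0)$ to $\gamma(t_0+\varepsilon)$ with length at most $\varepsilon$ must satisfy $\int x_2^2\,dt\le 0$, forcing it to coincide with $\gamma$.

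\textbf{Main obstacle.} The hard step is making this estimate rigorous, i.e.\ controlling the cubic remainder terms uniformly against the quadratic gain $\int h_{112}\,x_2^2$. I expect this to be done in the style of Liu–Sussmann and \cite{ABB}, with smallness of $\varepsilon$ absorbing the error terms.
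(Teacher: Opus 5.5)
Your proposal does not yet prove the statement. Its only nontrivial step is the one you list as the main obstacle, and the route you sketch for it would not close as written.

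\textbf{The second-order route (Steps 2--3).} After the Goh transformation, the second variation is coercive only in $\|w\|_{L^2}$, which is an $H^{-1}$-type norm of the transverse variation. The remainder in the expansion of $\lambda\bigl(E(u+v)-E(u)\bigr)$ on the constraint set is only small relative to $\|v\|_{L^2}^2$. So the naive Taylor argument fails for oscillating variations with small primitive. ``Positive leading part plus compact terms'' therefore does not make $u$ isolated (modulo reparametrization) among $L^2$-close admissible controls. That requires additional estimates specific to control-affine systems, which are the real content of the Agrachev--Sarychev approach. If you did have such $L^2$-rigidity on a fixed short interval, passing to arbitrary competitors would be easy: concatenate a competitor for a shorter subarc with the remaining pieces of $\gamma$, which gives an $L^2$-close control. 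No calibration is needed for that step.

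Three intermediate claims are also wrong:
\begin{itemize}
\item With $X_1$ tangent to $\gamma$, i.e.\ $u=(1,0)$, differentiating the Goh condition gives $h_{112}\equiv 0$ along the lift. It is $h_{212}$ that is nonzero and enters the Legendre term, so as written your leading coefficient vanishes identically.
\item The corank of $D_uE$ need not be one on short arcs. By Lemma~\ref{lem:decomp}, $\mathcal{A}_t=\spn\{\lambda_0\}\oplus(\Delta_3^\perp\cap\mathcal{A}_t)$, and in the example of Section~\ref{sec:exampledecomposition} one has $\dim\mathcal{A}_t=3$ for every $t>0$.
\item The tangential coordinate is not a calibration in general; in the paper's normal form its horizontal derivative is $Yx_2=1+x_1\psi_2$. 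In fact, if the arc is strictly abnormal (as in Section~\ref{sec:exp1}), no smooth $f$ with $|\nabla_H f|\le 1$ near $\gamma$ and $f(\gamma(t))=t$ can exist. Indeed, $\frac{d}{dt}f(\gamma(t))=1$ forces $\nabla_H f=\dot\gamma$ on $\gamma$, so $H(df)$ is maximal along $\gamma$. Then $d(H\circ df)=0$ there, and $t\mapsto d_{\gamma(t)}f$ is an integral curve of $\vec H$, i.e.\ a normal lift of $\gamma$.
\end{itemize}

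\textbf{The ``cleaner route''.} This is the route the paper actually follows, in Section~\ref{sect:opt_arcs} after Liu--Sussmann; Theorem~\ref{thm:opt_cpt} contains the statement as the case of a single lift. Both of its ingredients are missing from your proposal.

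First, the chart must encode the abnormal covector in a precise way. One extends $t\mapsto e^{t\vec G}(\lambda)$ to a one-form $\omega$ such that:
\begin{itemize}
\item $\omega$ annihilates $\Delta$ along $\gamma$;
\item $\omega$ avoids $\Delta_3^\perp$;
\item $\mathscr{L}_Y\omega=0$ along $\gamma$, where $Y$ is a unit horizontal extension of $\dot\gamma$ (Proposition~\ref{prop:nice}).
\end{itemize}
Then $\ker\omega$ is invariant under $Y$ along $\gamma$. Lemma~\ref{lem:Lemma3} yields coordinates with $X=\partial_1$, $Y=\partial_2+x_1\sum_i\psi_i\partial_i$, $\gamma(\tau+t)=(0,t,0,\dots,0)$, and $\psi_3=x_1\eta_1+x_3\eta_3+\dots+x_n\eta_n$ with $\eta_1\neq 0$. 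The absence of a term in $\psi_3$ depending on $x_2$ alone is what makes the constraint $y_3(s_2)=y_3(s_1)$ usable. A chart in which the covector is merely ``$dx_n$-type'' does not provide this.

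Second, one does not get $\int x_\perp^2\le 0$ directly. Take a competitor $y$ on an interval of length $\sigma\le 2\rho$, and set $\alpha=\sigma-\int v_y$ and $\beta=\max|y_1|$. Then:
\begin{itemize}
\item the $x_2$-equation gives $2\rho\le\sigma-\alpha+C_2\beta\sigma$;
\item the $x_3$-constraint gives $\int y_1^2\le 2\beta^2\alpha$;
\item $|\dot y_1|\le C_1$ gives $\int y_1^2\ge 2\beta^3/(3C_1)$.
\end{itemize}
Hence $\beta\le 3C_1\alpha$ and $2\rho\le\sigma-(1-3C_1C_2\sigma)\alpha$, which for short arcs forces $\alpha=0$ and $y$ to coincide with $\gamma$. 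The missing idea is this trade-off: any gain from leaving $\gamma$ is of order $\beta\sigma$, and it must be paid for by a tangential deficit $\alpha\ge\beta/(3C_1)$.
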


We conclude this section by stating a geometric property of regular abnormal lifts. 

\begin{proposition}
\label{prop:cones}
    Let $\gamma:[0,T] \to M$ be a regular abnormal path. The set of positively (resp.\ negatively) oriented regular abnormal lifts of $\gamma$ is a convex cone.
\end{proposition}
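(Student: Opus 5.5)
The plan is to characterize positively oriented lifts by linear conditions plus one open sign condition, and then check that both are preserved under nonnegative combinations. Fix the control $u$ of $\gamma$. Since $\gamma$ is the projection of an integral line of $\pm\vec G$, it is parametrized with unit speed and its control is smooth. Any lift $\lambda(\cdot)$ of $\gamma$ along the control $u$ has the form $\lambda(t)=(P^u_{t,0})^*\lambda(0)$, which is linear in $\lambda(0)$.

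First I show that a curve $\lambda(\cdot)$ is a positively oriented regular abnormal lift of $\gamma$ exactly when three things hold. (i) $\lambda(t)=(P^u_{t,0})^*\lambda(0)$, i.e.\ $\lambda$ solves \eqref{eq:pmp}. (ii) $h_1,h_2,h_{12}$ vanish along $\lambda$. (iii) $\h(\lambda(t))\neq 0$ and $u(t)=\h(\lambda(t))/|\h(\lambda(t))|$ for all $t$.

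For the forward direction, note that $\vec G$ is the combination of $\vec h_1,\vec h_2$ with the coefficients $\h/|\h|$ (by \eqref{eq:vecG}), and these coefficients equal $u$. For the converse, (iii) gives $\dot\lambda=u_1\vec h_1+u_2\vec h_2=\vec G(\lambda)$, and (ii) together with $\h\neq0$ gives $\lambda(t)\in\mathcal R$.

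Conditions (i) and (ii) are linear in the initial covector $\lambda(0)$. This is because $h_X$ is fiberwise linear and the pullback is linear.

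For condition (iii), the brackets $h_{112},h_{212}$ are also fiberwise linear, so $\h(\lambda(t))$ depends linearly on $\lambda(0)$. Condition (iii) then says that for every $t$ the vector $\h(\lambda(t))$ is a positive multiple of the fixed unit vector $u(t)$. Equivalently, $\h(\lambda(t))=c(t)\,u(t)$ with $c(t)>0$.

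Take two positively oriented lifts with initial covectors $\lambda^1(0),\lambda^2(0)$, and let $a,b\ge0$ with $a+b>0$. The lift starting at $a\lambda^1(0)+b\lambda^2(0)$ is $a\lambda^1+b\lambda^2$, which still satisfies (i) and (ii). Its $\h$ is $(ac_1+bc_2)\,u$ with $ac_1+bc_2>0$, so (iii) holds as well. Thus the set is closed under positive combinations, i.e.\ it is a convex cone. Whether the apex $0$ is included depends on convention, and I will remark on this.

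The negatively oriented case follows by the symmetry noted after \eqref{eq:vecG}: $\lambda$ is an integral line of $-\vec G$ iff $-\lambda$ is an integral line of $\vec G$. Hence the negatively oriented cone is minus the positive one.

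The only delicate point is making sure that the positivity condition is genuinely pointwise in $t$ with a fixed direction $u(t)$ independent of the lift. This is exactly what (iii) provides, and it is why the set is a cone rather than a vector space.
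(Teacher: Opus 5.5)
Your proof is correct and follows essentially the paper's argument. In both, the key point is that every positively oriented lift satisfies $\h(\lambda(t))=|\h(\lambda(t))|\,u(t)$ for the fixed control $u$ of $\gamma$; by fiberwise linearity, a positive combination of such lifts is again an abnormal lift whose $\h$ is a positive multiple of $u(t)$, hence an integral line of $\vec G$. The only differences are minor: the paper proves invariance under positive scaling separately, via the dilation $P_\alpha$ and $(P_\alpha)_*\vec h_i=\vec h_i$, whereas you get it as the case $b=0$ of your combination argument; you also state explicitly the linearity of conditions (i)--(ii), which the paper uses tacitly.
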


\begin{proof} It is sufficient to prove the statement for positively oriented lifts. We divide the proof into two claims. \\

    \emph{Claim 1}: if $\lambda(\cdot)$ is a positively oriented regular abnormal lift and $\alpha>0$ then also $\alpha \lambda(\cdot)$ is a regular abnormal lift with the same orientation. \\
    
    \emph{Proof of Claim 1.} For $\alpha \in \R\setminus\{0\}$ let  $P_\alpha : T^*M \to T^*M$ be the fiber-wise rescaling $P_\alpha(\mu) := \alpha \mu$. It holds $h_{i_1\dots i_j} \circ P_\alpha = \alpha h_{i_1 \dots i_j}$, and thus $(P_\alpha)_*\vec{h}_i = \vec{h}_i$, where the star denotes the push-forward. Omitting evaluation at $t$, it follows that
    \begin{align}
        \frac{d}{dt}(P_{\alpha}\circ\lambda) & = -\frac{h_{212}(\lambda)}{|\h(\lambda)|} \ \vec{h}_1(\alpha \lambda) + \frac{h_{112}(\lambda)}{|\h(\lambda)|} \ \vec{h}_2(\alpha \lambda)\\
        & = \mathrm{sign}(\alpha)\left(-\frac{h_{212}(\alpha\lambda)}{|\h(\alpha\lambda)|} \ \vec{h}_1(\alpha \lambda) + \frac{h_{112}(\alpha\lambda)}{|\h(\alpha\lambda)|} \ \vec{h}_2(\alpha \lambda)\right) = \mathrm{sign}(\alpha) \vec{G}(\alpha \lambda).
    \end{align}
    The claim follows. \\

    \emph{Claim 2:} if $\lambda_1(\cdot),\lambda_2(\cdot)$ are positively oriented regular abnormal lifts, then for all $\alpha_1,\alpha_2 >0$, the curve $\alpha_1 \lambda_1(\cdot) + \alpha_2 \lambda_2(\cdot)$ is a positively oriented regular abnormal lift. \\

    \emph{Proof of Claim 2.} 
    Let $u$ be the control associated with $\gamma$, which is smooth by definition of regular abnormal. Notice that $\h(\lambda_i(t))= |\h(\lambda_i(t))|u(t)$ for all $t \in [0,T]$, $i=1,2$. Therefore it holds
    \begin{equation}
        \h(\alpha_1\lambda_1(t)+\alpha_2\lambda_2(t))=(\alpha_1|\h(\lambda_1(t))|+\alpha_2|\h(\lambda_2(t))|)u(t) \neq 0 
    \end{equation}
    for all $t \in [0,T]$. Exploiting the fact that
    \begin{equation}
        \frac{\h(\lambda_1(t))}{|\h(\lambda_1(t))|}= \frac{\h(\lambda_2(t))}{|\h(\lambda_2(t))|}=u(t) \quad \forall t \in [0,T]
    \end{equation}
    we can deduce that 
    \begin{equation}
        |\h(\alpha_1 \lambda_1(t) + \alpha_2 \lambda_2(t))|=\alpha_1|\h(\lambda_1(t))|+\alpha_2|\h(\lambda_2(t))|,
    \end{equation}
    which means that \eqref{eq:contr_ra} holds along $\alpha_1 \lambda_1(\cdot) + \alpha_2 \lambda_2(\cdot)$ with constant positive sign, hence $\alpha_1 \lambda_1(\cdot) + \alpha_2 \lambda_2(\cdot)$ is an integral line of $\vec{G}$.
\end{proof}

\subsection{Branching of regular abnormal geodesics} 
Let $\gamma:[0,T]\to M$ be a regular abnormal geodesic. As we will show, the fact that $\gamma$ has a regular abnormal branching is characterized by a loss of dimension of the vector spaces $\mathcal{A}_t$, defined in \eqref{eq:defA}. Let us begin with a description of these sets. 
\begin{definition}
For any $t \in (0,T]$, we decompose $\mathcal{A}_t$ as the disjoint union
\begin{equation}
\label{eq:decomposition}
    \mathcal{A}_t= \mathrm{REG}(t) \sqcup \tnr(t) \sqcup \mathrm{L}(t)\sqcup\{0\},
\end{equation}
where:
\begin{itemize}
    \item $\mathrm{REG}(t)= \mathrm{REG}^+(t) \sqcup \mathrm{REG}^-(t)$ is the set of initial values of regular abnormal lifts of $\gamma|_{[0,t]}$, and $\mathrm{REG}^+(t)$ (resp. $\mathrm{REG}^-(t)$) are the initial values of the positively (resp. negatively) oriented ones;
    \item $\tnr(t)$ is the set of initial values of abnormal lifts of $\gamma|_{[0,t]}$ in $\Delta_3^\perp$. The notation stands for \emph{totally non-regular};
    \item $\mathrm{L}(t)$ is the set of initial values of all the remaining abnormal lifts of $\gamma|_{[0,t]}$.
\end{itemize}
\end{definition}

An example of the decomposition \eqref{eq:decomposition} can be found in Section \ref{sec:exampledecomposition} below.

\begin{remark}
\label{rk:+-G}  Positive or negative orientation of a regular abnormal lift is determined by its initial value: the positively oriented ones are the orbits $s \mapsto e^{s\vec{G}}(\lambda_0)$ with $\lambda_0 \in \mathrm{REG}^+(t)$, while the negative oriented ones are the orbits $s \mapsto e^{-s\vec{G}}(\lambda_0)$ with $\lambda_0 \in \mathrm{REG}^-(t)$. Multiplication by $-1$ yields a one-to-one correspondence between $\mathrm{REG}^+(t)$  and $\mathrm{REG}^-(t)$.
\end{remark}

Let us collect some properties of $\mathrm{REG}(t)$ and $\tnr(t)$. 
\begin{proposition}
\label{prop:decomp}
    For all $0<t \leq T$ and $0 < t_1 \le t_2 \le T$, the following hold:
    \begin{itemize}
        \item [(i)] $\mathrm{REG}(t_2) \subset \mathrm{REG}(t_1)$ and $\tnr(t_2) \subset \tnr(t_1)$;
        \item [(ii)] $\mathrm{TNR}(t) \sqcup \{0\}$ is a vector subspace of $\mathcal{A}_t$;  in particular, the function 
        \begin{equation}
            t \mapsto \dim(\tnr(t) \sqcup \{0\})
        \end{equation}is piecewise constant, non-increasing and left-continuous;
        \item [(iii)] $\mathrm{REG}^+(t)$ and $\mathrm{REG}^-(t)$ are open convex cones inside $\mathcal{A}_t$;
        \item [(iv)] $\mathrm{REG}^{\pm}(t)+\tnr(t)=\mathrm{REG}^{\pm}(t)$, i.e. for all $\lambda_0 \in \mathrm{REG}^{\pm}(t)$ and $\xi_0 \in \tnr(t)$ it holds $\lambda_0+\xi_0 \in \mathrm{REG}^{\pm}(t)$.
    \end{itemize}
\end{proposition}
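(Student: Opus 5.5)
The plan is to work throughout at the level of lifts. An element $\lambda_0\in\mathcal{A}_t$ determines the abnormal lift $\lambda(s)=(P^u_{0,s})^{-1*}\lambda_0$ of $\gamma|_{[0,t]}$. Since the flow $P^u$ depends only on the control $u$ of $\gamma$, and the control is unique, this lift is linear in $\lambda_0$. Membership of $\lambda_0$ in each of $\mathrm{REG}^\pm(t)$, $\tnr(t)$, $\mathrm{L}(t)$ is a property of this curve on $[0,t]$. We will use the function $\h=(-h_{212},h_{112})$, which is linear on fibers.

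\emph{Step (i).} Restricting a lift of $\gamma|_{[0,t_2]}$ to $[0,t_1]$ gives a lift of $\gamma|_{[0,t_1]}$ with the same initial covector. An integral line of $\pm\vec G$ stays one after restriction, and a lift lying in $\Delta_3^\perp$ on $[0,t_2]$ lies there on $[0,t_1]$. This gives both inclusions.

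\emph{Step (ii).} Set $V=\tnr(t)\sqcup\{0\}$. A covector belongs to $V$ iff its lift satisfies $h_{112}(\lambda(s))=h_{212}(\lambda(s))=0$ for all $s\in[0,t]$, together with the abnormality conditions $h_1=h_2=h_{12}=0$. These conditions are linear in $\lambda(s)$, and $\lambda(s)$ is linear in $\lambda_0$, so $V$ is a vector subspace. By (i), $t\mapsto\dim V_t$ is non-increasing and integer valued, hence piecewise constant with finitely many jumps.

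For left-continuity at $\tau$, take $\lambda_0\in V_t$ for every $t<\tau$. The lift is defined on $[0,\tau]$, because $\lambda_0\in\mathcal{A}_t$ for $t<\tau$ and, by the analogous left-continuity of $\mathcal A_t$, $\lambda_0\in\mathcal A_\tau$. The conditions $h_{i12}(\lambda(s))=0$ hold on $[0,\tau)$ and therefore on $[0,\tau]$ by continuity. Alternatively, $V_t$ is the intersection over $s\le t$ of closed linear conditions, so a decreasing family stabilizes to its limit.

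\emph{Step (iii).} Proposition~\ref{prop:cones} already gives that $\mathrm{REG}^\pm(t)$ are convex cones. The main obstacle is openness in $\mathcal{A}_t$.

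Take $\lambda_0\in\mathrm{REG}^+(t)$ with lift $\lambda(\cdot)$. Since $\lambda([0,t])$ is compact, $|\h(\lambda(s))|\ge c>0$ on $[0,t]$. For $\mu_0\in\mathcal{A}_t$ close to $\lambda_0$, linearity gives a lift $\mu(\cdot)$ uniformly close to $\lambda(\cdot)$, so $\h(\mu(s))\neq0$ for all $s$. Then $\mu(s)\in\mathcal R$ throughout, and Proposition~\ref{prop:Imax} applied with $I=[0,t]$ shows $\mu$ is regular abnormal. This uses that $\gamma$ is a geodesic by Theorem~\ref{pr:liu-sus}, after reparametrizing $\gamma$ to unit speed.

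To fix the orientation, use \eqref{eq:contr_ra}: $u(s)=\sigma\,\h(\mu(s))/|\h(\mu(s))|$. Since $\h(\mu(s))$ is close to $\h(\lambda(s))=|\h(\lambda(s))|u(s)$, the sign $\sigma$ must be $+$. Hence $\mu_0\in\mathrm{REG}^+(t)$.

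\emph{Step (iv).} Let $\lambda_0\in\mathrm{REG}^\pm(t)$ and $\xi_0\in\tnr(t)$, with lifts $\lambda,\xi$. The sum $\lambda+\xi$ is an abnormal lift by linearity. Since $\h$ is fiberwise linear and $\h(\xi(s))=0$, we get $\h(\lambda(s)+\xi(s))=\h(\lambda(s))=\pm|\h(\lambda(s))|u(s)\neq0$. So $\lambda+\xi$ lies in $\mathcal R$ on all of $[0,t]$. Applying \eqref{eq:contr_ra} and Proposition~\ref{prop:Imax} as in Step (iii), it is an integral line of $\pm\vec G$ with the same sign. Therefore $\lambda_0+\xi_0\in\mathrm{REG}^\pm(t)$.
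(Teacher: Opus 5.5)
Your proof is correct and follows the paper's argument. Parts (i)--(ii) go by restriction and linearity of the lift in $\lambda_0$, part (iii) by Proposition~\ref{prop:cones} plus a perturbation that keeps $|\h|$ bounded away from zero along the lift on $[0,t]$, and part (iv) from $\h(\xi(s))=0$. The only variation is in (iii): you get the neighborhood from continuity of the linear map $\mu_0\mapsto\mu(\cdot)$ and fix the orientation pointwise by closeness of $\h(\mu(s))$ to $\h(\lambda(s))$, instead of using the splitting of Lemma~\ref{lem:decomp} and the sign of $\alpha$; this is slightly more economical, and the pointwise sign argument makes the appeal to Proposition~\ref{prop:Imax} unnecessary.
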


Before proving this result, let us state a technical lemma. 

\begin{lemma}
\label{lem:decomp}
   Let $\gamma:[0,T]\to M$ be a regular abnormal path, and fix a regular abnormal lift $\lambda:[0,T] \to T^*M$ of $\gamma$. Set $\lambda_0 :=\lambda(0)$. For all $t \in (0,T]$ it holds
    \begin{equation}
        \mathcal{A}_t= \spn\{\lambda_0\} \oplus (\Delta_3^\perp\cap \mathcal{A}_t). 
    \end{equation}
\end{lemma}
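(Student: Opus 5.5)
The plan is to use the Goh-type identities along lifts and the linearity of the adjoint propagation. Let $u$ be the (smooth, unit) control of $\gamma$. Every element of $\mathcal{A}_t$ is the initial value of a lift $\mu(s)=(P^u_{0,s})^{-*}\mu_0$, $s\in[0,t]$, and this assignment is linear in $\mu_0$. Hence every function $h_X(\mu(s))$ is linear in $\mu_0$ for fixed $s$.

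The first step is to show that for any abnormal lift $\mu(\cdot)$ of $\gamma|_{[0,t]}$ we have $h_1=h_2=h_{12}=0$ along it, by the Goh argument recalled in \eqref{eq:Goh}. Differentiating $h_{12}(\mu(s))\equiv 0$ then gives
\[
u_1(s)h_{112}(\mu(s))+u_2(s)h_{212}(\mu(s))=0 .
\]
In other words, $\h(\mu(s))$ is orthogonal to $u(s)^\perp$, i.e.\ parallel to $u(s)$. Applying this to $\lambda$ gives $\h(\lambda(s))=|\h(\lambda(s))|\,u(s)$ with $|\h(\lambda(s))|>0$.

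Next, define the linear functional $\phi_s(\mu_0):=\scal{\h(\mu(s))}{u(s)}$ on $\mathcal{A}_t$. Since $\h(\mu(s))$ is parallel to $u(s)$, we have $\h(\mu(s))=\phi_s(\mu_0)\,u(s)$. Thus $\mu(s)\in\Delta_3^\perp$ if and only if $\phi_s(\mu_0)=0$. Indeed, on $\Delta_2^\perp$, lying in $\Delta_3^\perp$ means $h_{112}=h_{212}=0$, because the other brackets of length three vanish by antisymmetry. For $s=0$ this reads: $\mu_0\in\Delta_3^\perp$ if and only if $\phi_0(\mu_0)=0$. Since $\phi_0(\lambda_0)=|\h(\lambda_0)|\neq 0$, we get the splitting $\mathcal{A}_t=\spn\{\lambda_0\}\oplus\ker\phi_0$. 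It remains to show that $\ker\phi_0=\Delta_3^\perp\cap\mathcal{A}_t$, which is exactly the pointwise criterion just established, provided $\Delta_3^\perp\cap\mathcal{A}_t$ is interpreted as the set of initial covectors lying in $\Delta_3^\perp(\gamma(0))$.

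The main obstacle is interpretive. If $\Delta_3^\perp\cap\mathcal{A}_t$ is meant as initial values of lifts staying in $\Delta_3^\perp$ for all $s$ (i.e.\ $\tnr(t)\cup\{0\}$), then the statement is stronger. In that case I would show that $\phi_0(\mu_0)=0$ forces $\phi_s(\mu_0)=0$ for all $s$. The plan is to differentiate $h_{112}$ and $h_{212}$ along $\mu$, which gives a linear ODE, but this ODE is not closed in general.

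The alternative I would try first uses the fact that $\gamma$ is a geodesic (Theorem~\ref{pr:liu-sus}) together with Proposition~\ref{prop:Imax}. Take $\mu_0\in\ker\phi_0$ and consider $\lambda_0+\varepsilon\mu_0$ for small $\varepsilon$. Its lift $\lambda+\varepsilon\mu$ lies in $\mathcal{R}$ on all of $[0,t]$ by compactness, so by Proposition~\ref{prop:Imax} it is a positively oriented regular lift. Therefore $\phi_s(\lambda_0+\varepsilon\mu_0)>0$, i.e.\ $\phi_s(\lambda_0)+\varepsilon\phi_s(\mu_0)>0$, for all small $\varepsilon$ of either sign. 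This only yields positivity, not vanishing, so it does not close the gap.

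I therefore expect the lemma to be meant in the pointwise sense $\mu_0\in\Delta_3^\perp(\gamma(0))$. Under that reading the proof above is complete.
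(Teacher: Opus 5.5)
Your proof is correct and is essentially the paper's argument. Both rest on $\h(\mu_0)$ being parallel to $u(0)$ for every $\mu_0\in\mathcal{A}_t$, so that subtracting a suitable multiple of $\lambda_0$ (equivalently, projecting onto $\ker\phi_0$) lands in $\Delta_3^\perp\cap\mathcal{A}_t$, read pointwise as covectors in $\Delta_3^\perp(\gamma(0))$ exactly as you do. The paper obtains the parallelism by invoking Theorem~\ref{pr:liu-sus} and Proposition~\ref{prop:Imax}, whereas your direct differentiation of the Goh identity is more economical and makes linearity and directness automatic. That identity holds at $s=0$ by continuity, since $u$ is smooth.

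The stronger reading you worried about is actually false, so there is no gap to close. In the $\R^5$ example of Section~\ref{sec:exampledecomposition}, $\Delta_3^\perp\cap\mathcal{A}_t=\{\bar p_3=0\}$ is two-dimensional, while $\tnr(t)\sqcup\{0\}=\{\bar p_3=\bar p_5=0\}$ is only one-dimensional.
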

\begin{proof}
    The inclusion "$\supset$" is trival. Let us prove that also "$\subset$" holds true. Let $\mu(\cdot)$ be an abnormal lift of $\gamma|_{[0,t]}$, with $\mu_0:=\mu(0) \notin \Delta_3^\perp$. By Theorem~\ref{pr:liu-sus} and Proposition~\ref{prop:Imax} there exists $\varepsilon >0$ such that $\mu|_{[0,\varepsilon]}$ is a regular abnormal lift of $\gamma|_{[0,\varepsilon]}$. Let $u$ be the control associated with $\gamma$. Recalling the characterization of the controls of regular abnormal geodesics \eqref{eq:contr_ra},  without loss of generality we assume $\h(\mu_0)=| \h(\mu_0)|u(0)$ (i.e. $\mu|_{[0,\varepsilon]}$ is positively oriented by Remark \ref{rk:+-G}). Up to replacing the reference regular abnormal lift $\lambda(\cdot)$ with $-\lambda(\cdot)$,  we can assume that the same relation holds for $\lambda_0$. Then, the following holds:
    \begin{align}
        \h\left( \lambda_0- \frac{|\h(\lambda_0)|}{|\h(\mu_0)|}\mu_0\right ) 
        & = \h(\lambda_0) - \frac{|\h(\lambda_0)|}{|\h(\mu_0)|}\h(\mu_0) \\
        & = |\h(\lambda_0)|u(0) - \frac{|\h(\lambda_0)|}{|\h(\mu_0)|}| \h(\mu_0)|u(0) =0.
    \end{align}
    Thus $\xi(\cdot):=\lambda(\cdot) -\tfrac{|\h(\lambda_0)|}{|\h(\mu_0)|} \mu(\cdot)$, is an abnormal lift of $\gamma$ with $\xi(0)\in \Delta_3^\perp\cap \mathcal{A}_t$.
\end{proof}

Lemma \ref{lem:decomp} implies that if we fix a reference regular abnormal lift $\lambda(\cdot)$ of $\gamma$, any regular abnormal lift  $\mu(\cdot)$ of a subsegment $\gamma|_{[0,t]}$ is characterized, up to a factor $\alpha \neq0$, by a unique $\xi_0 \in \Delta_3^\perp \cap \mathcal{A}_t$, such that, letting $\lambda_0=\lambda(0)$, $\mu_0=\mu(0)$, it holds  $\mu_0=\alpha \lambda_0+\xi_0$. However, for fixed $\alpha \neq 0$ it is not true, for a general $\xi_0 \in \Delta_3^\perp \cap \mathcal{A}_t$, that $\alpha \lambda_0+\xi_0$ is the initial value of a regular abnormal lift of $\gamma|_{[0,t]}$, unless $\xi_0$ is \emph{small enough}, as we prove below.

\begin{proof}[Proof of Proposition \ref{prop:decomp}]
    Properties $(i),(ii)$ are straightforward.
    
    Concerning $(iii)$, it was proved in Proposition~\ref{prop:cones} that $\mathrm{REG}^{\pm}(t)$ are convex cones. Let us prove that $ \mathrm{REG}^+(t)$ is open.  Let $\lambda_0 \in \mathrm{REG}^+(t)$ be the initial value of a regular abnormal lift $\lambda(\cdot)$ of $\gamma|_{[0,t]}$. There exists $C>0$ such that $|{\h}(\lambda(s))| \ge C$ for all $s \in [0,t]$.  Thanks to Lemma~\ref{lem:decomp}, we can write any $\mu_0 \in \mathcal{A}_t$ as  $\mu_0=\alpha \lambda_0+\xi_0$ with $\xi_0 \in \Delta_3^\perp \cap \mathcal{A}_t$ and $\alpha \in \R$. Working in a coordinate chart centered at $\gamma(0)$, we can choose  $\rho>0$ such that, defining
    \begin{equation} 
        \mathcal{B} = \left\{ \xi_0 \in \Delta_3^\perp \cap \mathcal{A}_t \;\Big|\; |\xi_0| < \rho \right\},
    \end{equation}
    the quantity
    \begin{equation}
        M := \sup \left\{ |\h(\xi(s))| \;\big|\; s \in [0,t], \ \xi(\cdot) \ \text{abnormal lift of $\gamma|_{[0,t]}$ s.t. } \xi(0)=\xi_0 \in \mathcal{B} \right\}
    \end{equation}
    verifies $M < C$. Here $|\xi_0|$ denotes the Euclidean norm of $\xi_0$, identified with a vector in $\R^n$ via the coordinate chart. Now choose $\varepsilon>0$ so that $(1-\varepsilon)C>M$ and let  
    \begin{equation}
        \Omega = \{\alpha \lambda_0 + \xi_0 \mid \alpha > 1-\varepsilon, \ \xi_0 \in \mathcal{B} \} \subset \mathcal{A}_t.
    \end{equation}
    By Lemma~\ref{lem:decomp}, $\Omega$ is an open neighborhood of $\lambda_0$ in $\mathcal{A}_t$.  Let $\mu_0 = \alpha \lambda_0 + \xi_0\in \Omega$ with $\alpha, \xi_0$ as in the definition of $\Omega$. Denoting by $\xi(\cdot)$ the unique abnormal lift of $\gamma|_{[0,t]}$ such that $\xi(0)=\xi_0$, the abnormal lift of $\gamma|_{[0,t]}$ associated with $\mu_0$ is $\mu(\cdot):=\alpha\lambda(\cdot) + \xi(\cdot)$, which verifies
    \begin{align}
        |\h(\mu(s))|  \ge \alpha |\h(\lambda(s))| - |\h(\xi(s))|   
        \ge (1-\varepsilon) C - M > 0
    \end{align}
    for all $s \in [0,t]$, i.e. $\mu_0 \in \mathrm{REG}(t)$. Moreover, $\mu_0 \in  \mathrm{REG}^+(t)$ thanks to Remark~\ref{rk:+-G} and to the fact that $\alpha >0$.
    
    Finally, the fact that $\mathrm{REG}^{\pm}(t)+\tnr(t) \subset \mathrm{REG}^{\pm}(t)$ follows by observing that if $\lambda(\cdot)$ is a regular abnormal lift of $\gamma$ and $\xi(\cdot)$ is an abnormal lift of $\gamma$ in $\Delta_3^\perp$ then their sum is an abnormal lift of $\gamma$. Since it never intersects $\Delta_3^\perp$, it is regular by Proposition \ref{prop:Imax}. Moreover, it has the same orientation as $\lambda(\cdot)$.
\end{proof}

\begin{definition}
\label{def:ra_br}
    We say that a regular abnormal geodesic $\gamma:[0,T]\to M$ \emph{has a regular abnormal branching} at time $\tau \in (0,T)$ if there exist $\varepsilon_0>0$ and a regular abnormal geodesic $\bar{\gamma}:[0,\tau+\varepsilon_0]\to M$ such that $\bar{\gamma} |_{[0,\tau]} = \gamma|_{[0,\tau]}$ and $\bar{\gamma} |_{[0,\tau+\varepsilon]} \neq \gamma|_{[0,\tau+\varepsilon]}$  for all $0<\varepsilon\le\varepsilon_0$.
\end{definition}

The following observation is a consequence of the description of regular abnormal geodesics given in the previous section.

\begin{proposition}
\label{prop:analytic}
    In real-analytic sub-Riemannian manifolds of rank two, regular abnormal geodesics cannot have either normal or regular abnormal branching.
\end{proposition}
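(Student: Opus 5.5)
The plan is to use analyticity of the relevant Hamiltonian flows, together with uniqueness of analytic continuation, to show that any branching geodesic coincides with $\gamma$ on a neighbourhood of $\tau$. In the real-analytic setting we may, at least locally, take $X_1,X_2$ analytic. Then $h_1,h_2,h_{112},h_{212}$ are analytic on $T^*M$, the sub-Riemannian Hamiltonian $H$ is analytic, and on the open set where $\h\neq 0$ the Hamiltonian $G$ in \eqref{eq:G} is analytic as well, since the denominator $\sqrt{h_{212}^2+h_{112}^2}$ is analytic and nonvanishing there. Consequently, integral lines of $\vec G$, $-\vec G$ and $\vec H$ are real-analytic curves in $T^*M$, and their projections to $M$ are real-analytic.

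The first step treats regular abnormal branching. Suppose $\bar\gamma:[0,\tau+\varepsilon_0]\to M$ is a regular abnormal geodesic with $\bar\gamma|_{[0,\tau]}=\gamma|_{[0,\tau]}$. By Definition~\ref{def:reg_abn}, $\bar\gamma$ is the projection of an integral line of $\pm\vec G$, so $\bar\gamma$ is real-analytic on $[0,\tau+\varepsilon_0]$. The same holds for $\gamma$ on $[0,T]$. The two analytic curves agree on the nontrivial interval $[0,\tau]$, so by the identity principle they agree on $[0,\tau+\varepsilon_0]\cap[0,T]$. This contradicts $\bar\gamma|_{[0,\tau+\varepsilon]}\neq\gamma|_{[0,\tau+\varepsilon]}$.

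The second step treats normal branching. Here $\bar\gamma$ is a normal geodesic, hence the projection of an integral line of the analytic vector field $\vec H$, and so it is analytic. The reference $\gamma$ need not be normal, but it is analytic because it is regular abnormal. The same identity-principle argument therefore applies verbatim. If one insists on Definition~\ref{def:n_br}, which presupposes that $\gamma$ is normal, the argument is unchanged. Alternatively, for normal $\gamma$ one can invoke Theorem~\ref{th:n_br} with analytic dependence, but the direct argument is simpler.

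The only point needing care, and the one I expect to be the main obstacle, is that analyticity must be used on a neighbourhood of $[0,\tau+\varepsilon_0]$ in which a global analytic orthonormal frame may not exist. This is handled by working locally: the identity principle is local and propagates along connected intervals. Concretely, consider the supremum $s^*$ of the times up to which $\gamma$ and $\bar\gamma$ coincide. Near $s^*$, choose a local analytic orthonormal frame. In that chart both curves are analytic near $s^*$ and coincide on a left neighbourhood of $s^*$, hence they coincide on a full neighbourhood of $s^*$. This contradicts maximality of $s^*$ unless $s^*=\tau+\varepsilon_0$. It also requires noting that the lift stays in $\mathcal R$, where $G$ is analytic, which holds by definition of regular abnormal paths.
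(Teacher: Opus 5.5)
Your proposal is correct and takes the same approach as the paper: normal and regular abnormal geodesics are projections of integral lines of the real-analytic vector fields $\vec H$ and $\pm\vec G$, so they are real-analytic, and the identity principle rules out branching. Your extra treatment of local analytic frames and the supremum argument spells out details that the paper's one-line proof leaves implicit.
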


\begin{proof}
   Under the assumptions, normal and regular abnormal geodesics are real-analytic curves, since they are projections of integral lines of $\vec{H}$ and $\vec{G}$, respectively. Therefore, they cannot have normal nor regular abnormal branching. 
\end{proof}

We now prove the main theorem of this section.

\begin{theorem}
\label{th:abn_br}
    A regular abnormal geodesic $\gamma:[0,T] \to M$ has a regular abnormal branching at time $\tau\in (0,T)$ if and only if $t \mapsto \dim(\mathcal{A}_t)$ is discontinuous at $\tau$.
\end{theorem}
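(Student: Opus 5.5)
We prove the two implications separately, adapting the scheme of Theorem~\ref{th:n_br}. Throughout, fix a unit-speed parametrization and a reference regular abnormal lift $\lambda(\cdot)$ of $\gamma$ with $\lambda_0=\lambda(0)$. By Lemma~\ref{lem:decomp}, for every $t$ we have $\mathcal{A}_t=\spn\{\lambda_0\}\oplus(\Delta_3^\perp\cap\mathcal{A}_t)$. Recall that $t\mapsto\dim\mathcal{A}_t$ is non-increasing and left-continuous. Hence discontinuity at $\tau$ means $\dim\mathcal{A}_{\tau+\varepsilon}<\dim\mathcal{A}_\tau$ for all small $\varepsilon>0$.

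\emph{Branching implies discontinuity.} Let $\bar\gamma:[0,\tau+\varepsilon_0]\to M$ be a regular abnormal geodesic as in Definition~\ref{def:ra_br}, and let $\bar\lambda(\cdot)$ be a regular abnormal lift of it. Then $\bar\lambda|_{[0,\tau]}$ is an abnormal lift of $\gamma|_{[0,\tau]}$, so $\bar\lambda(0)\in\mathcal{A}_\tau$.
\begin{itemize}
\item Suppose, for contradiction, that $\dim\mathcal{A}_t$ is constant on $[\tau,\tau+\varepsilon]$. Then $\mathcal{A}_{\tau+\varepsilon}=\mathcal{A}_\tau$, since these spaces are nested.
\item Consequently $\bar\lambda(0)$ is the initial covector of an abnormal lift $\mu(\cdot)$ of $\gamma|_{[0,\tau+\varepsilon]}$. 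By uniqueness of solutions of \eqref{eq:extremallift} given the control on $[0,\tau]$, $\mu$ and $\bar\lambda$ coincide on $[0,\tau]$.
\item In particular $\mu(\tau)=\bar\lambda(\tau)\in\mathcal{R}$. By Proposition~\ref{prop:Imax}, $\mu$ is a regular abnormal lift of $\gamma$ near $\tau$, with the same orientation as $\bar\lambda$ (orientation is fixed on $[0,\tau]$).
\item Thus both $\mu$ and $\bar\lambda$ are integral curves of the same field $\pm\vec G$ through the same point $\bar\lambda(\tau)$. By uniqueness for the smooth ODE, they coincide on $[\tau,\tau+\varepsilon']$, and so do their projections. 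This contradicts $\bar\gamma\neq\gamma$ on $[0,\tau+\varepsilon']$.
\end{itemize}

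\emph{Discontinuity implies branching.} This is the main obstacle. Let $\xi_0\in\mathcal{A}_\tau\setminus\mathcal{A}_{\tau+\varepsilon}$ for all small $\varepsilon$. Such $\xi_0$ exists, because the $\mathcal{A}_{\tau+\varepsilon}$ increase as $\varepsilon\downarrow0$ and have constant dimension for small $\varepsilon$, strictly less than $\dim\mathcal{A}_\tau$.
\begin{itemize}
\item By the decomposition of Lemma~\ref{lem:decomp}, after subtracting a multiple of $\lambda_0$ we may take $\xi_0\in\Delta_3^\perp$. We may also take $\xi_0$ outside the span of $\lambda_0$ and $\mathcal{A}_{\tau+\varepsilon}$.
\item By the openness argument in Proposition~\ref{prop:decomp}(iii), for $s$ small the covector $\mu_0=\lambda_0+s\xi_0$ lies in $\mathrm{REG}^+(\tau)$. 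Let $\mu(\cdot)$ be its abnormal lift on $[0,\tau]$, with $\mu(\tau)\in\mathcal{R}$.
\item Define $\bar\gamma$ as the projection of $t\mapsto e^{(t-\tau)\vec G}(\mu(\tau))$ on $[\tau,\tau+\varepsilon_0]$, concatenated with $\gamma|_{[0,\tau]}$. Here $\varepsilon_0>0$ exists because $\mathcal{R}_T$ is open, i.e.\ the flow is defined for short time.
\item The concatenated lift is an integral line of $\vec G$, hence $\bar\gamma$ is regular abnormal, and a geodesic by Theorem~\ref{pr:liu-sus}.
\end{itemize}

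It remains to show $\bar\gamma\ne\gamma$ on every $[0,\tau+\varepsilon]$. Suppose $\bar\gamma=\gamma$ on $[0,\tau+\varepsilon]$. Then $\mu$ is an abnormal lift of $\gamma|_{[0,\tau+\varepsilon]}$, so $\mu_0\in\mathcal{A}_{\tau+\varepsilon}$. Since also $\lambda_0\in\mathcal{A}_{\tau+\varepsilon}$, we get $s\xi_0\in\mathcal{A}_{\tau+\varepsilon}$, contradicting the choice of $\xi_0$.

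The delicate points are two. First, the branching curve must be an honest regular abnormal geodesic. This is ensured because the perturbation is small enough to stay in $\mathrm{REG}^+(\tau)$, which is what the openness in Proposition~\ref{prop:decomp}(iii) provides. Second, the choice of $\xi_0$ must be uniform in $\varepsilon$. This follows from the stabilization of the dimensions of $\mathcal{A}_{\tau+\varepsilon}$ for small $\varepsilon$.
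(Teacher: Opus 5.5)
Your proof is correct and follows the paper's argument. The forward direction is the same uniqueness-of-lifts contradiction, and for the converse you use, as the paper does, the openness of $\mathrm{REG}^{\pm}(\tau)$ in $\mathcal{A}_\tau$ (Proposition~\ref{prop:decomp}(iii)) and the linear structure of $\mathcal{A}_t$ to get an initial covector in $\mathrm{REG}(\tau)\setminus\mathcal{A}_{\tau+\varepsilon}$, which you then flow by $\vec G$. The only differences are cosmetic: the paper takes this covector from a basis of $\mathcal{A}_\tau$ contained in $\mathrm{REG}(\tau)$, whereas you perturb $\lambda_0$ in a direction $\xi_0\notin\mathcal{A}_{\tau+\varepsilon}$, and your reduction to $\xi_0\in\Delta_3^\perp$ is not needed.
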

\begin{proof}
    Assume that $\gamma$ has a regular abnormal branching at time $\tau$, let $\varepsilon_0>0$ and $\bar{\gamma}:[0,\tau+\varepsilon_0]\to M$ be a regular abnormal geodesic that branches from $\gamma$ at time $\tau$. Let $\bar{\lambda}(\cdot)$ be a regular abnormal lift of $\bar{\gamma}$ with $\bar{\lambda}(0)=:\bar{\lambda}_0$. In particular, $\bar{\lambda}|_{[0,\tau]}$ is an abnormal lift of $\gamma|_{[0,\tau]}$, thus $\bar{\lambda}_0 \in \mathcal{A}_{\tau}$. 
     Let us show that $\bar{\lambda}_0 \not\in \mathcal{A}_{\tau+\varepsilon}$ for $\varepsilon>0$. Indeed, if by contradiction there existed an abnormal lift $\lambda(\cdot)$ of $\gamma|_{[0,\tau+\varepsilon]}$ starting from $\bar{\lambda}_0$,  it should, in addition, coincide with $\bar{\lambda}(\cdot)$ on $[0,\tau]$ (Proposition~\ref{prop:Imax}). 
    As a consequence, up to reducing $\varepsilon$, $\lambda|_{[0,\tau+\varepsilon]}$  should be a regular abnormal lift of $\gamma|_{[0,\tau+\varepsilon]}$ with the same orientation as $\bar\lambda$. This would imply that $\lambda|_{[0,\tau+\varepsilon]}$ and $\bar{\lambda}|_{[0,\tau+\varepsilon]}$ are two distinct integral lines of $\vec{G}$ (or $-\vec{G}$) that coincide on $[0,\tau]$, leading to a contradiction.
   %
   By construction $\mathcal{A}_t\supset \mathcal{A}_{t+\varepsilon}$, and the above argument shows that the inclusion must be strict, proving that $t \mapsto \dim(\mathcal{A}_t)$ is discontinuous at $\tau$.
    
    Conversely, assume that $t \mapsto \dim(\mathcal{A}_t)$ is discontinuous at $\tau$. Since by Proposition \ref{prop:decomp} $\mathrm{REG}(t)$ is open in $\mathcal{A}_t$, it contains a basis of $\mathcal{A}_t$ for all $t \in (0,T]$. Furthermore, $t \mapsto \dim(\mathcal{A}_t)$ is  constant when $t>\tau$ and $t-\tau$ is small. Hence, not all the elements of the basis of $\mathcal{A}_\tau$ contained in $\mathrm{REG}(\tau)$  also belong to $\mathcal{A}_t$ for $t>\tau$. As a consequence, 
    we can find $\lambda_0 \in \mathrm{REG}(\tau)$ such that $\lambda_0 \notin \mathcal{A}_{\tau+\varepsilon}$ for all $\varepsilon>0$. The fact that $\lambda_0 \in \mathrm{REG}(\tau)$ means that, replacing $\lambda_0$ with $-\lambda_0$ if necessary, the curve $[0,\tau] \ni t \mapsto e^{t\vec{G}}(\lambda_0)$ is a regular abnormal lift of $\gamma|_{[0,\tau]}$ and in particular $e^{\tau \vec{G}}(\lambda_0) \in \mathcal{R}$. Thus, there exists $\varepsilon_0>0$ such that the curve $\bar{\gamma}(t):= \pi \circ e^{t\vec{G}}(\lambda_0)$ is well-defined on the interval $[0,\tau+\varepsilon_0]$, and it is a regular abnormal geodesic. Since by construction $\lambda_0\notin \mathcal{A}_{\tau+\varepsilon}$ for all $\varepsilon>0$ we must have $\bar{\gamma}|_{[0,\tau+\varepsilon]}\neq \gamma|_{[0,\tau+\varepsilon]}$ for all $0<\varepsilon\le\varepsilon_0$.
    \end{proof}

\subsection{Explicit description of the branching}
\label{sect:br_descr}
 Let us give a more detailed description of the regular abnormal branching. Let $\gamma:[0,T] \to M$ be a regular abnormal geodesic that has regular abnormal branching at time $\tau \in (0,T)$. Fix a regular abnormal lift $\lambda(\cdot)$ of $\gamma$ and let $\lambda_0:=\lambda(0)$. We can assume that $\lambda(\cdot)$ is positively oriented and, by symmetry, restrict our attention to positively oriented lifts. Fix $\varepsilon>0$ such that $\mathcal{A}_t \equiv \mathcal{A}_{\tau+\varepsilon}$ for all $t \in(\tau,\tau+\varepsilon]$. The proof of Theorem~\ref{th:abn_br} implies that the initial values of the positively oriented regular abnormal lifts of the geodesics that branch from $\gamma$ at time $\tau$ are the covectors $\mu_0 \in \mathcal{R} \cap T^*_{\gamma(0)}M$ such that $\mu_0 \in \mathrm{REG}^+(\tau)$ and $\mu_0 \notin \mathcal{A}_{\tau+\varepsilon}$.
 Recalling Lemma~\ref{lem:decomp}, for all such $\mu_0$ there exist $\beta>0$ and a unique $\xi_0 \in \Delta_3^\perp \cap \mathcal{A}_\tau$ such that 
\begin{equation}
    \beta\mu_0= \lambda_0 +\xi_0.
\end{equation}
Thanks to Proposition \ref{prop:cones}, the family of ($\gamma$ and) the regular abnormal geodesics that branch from $\gamma$ at time $\tau$ is characterized by the open set
\begin{equation}
\label{eq:branching_curves}
    \mathrm{B} := \{ \xi_0 \in \Delta_3^\perp \cap \mathcal{A}_{\tau} \mid \lambda_0 + \xi_0 \in \mathrm{REG}^+(\tau) \setminus \mathcal{A}_{\tau+\varepsilon}\} \cup \{0\}.
\end{equation}
Indeed,  recalling the definition of $\mathcal{R}_T$ and $\mathcal{D}$ (Definition \ref{def:rt}), and letting 
\begin{equation}
    \mathcal{D}(\lambda_0) := \{(t,\xi_0) \in [0,\infty)\times \mathcal{R} \mid (t, \lambda_0+\xi_0) \in \mathcal{D \}},
\end{equation}
we can define the function
\begin{equation}
    \Phi: ([0,T]\times \mathrm{B}) \cap \mathcal{D}(\lambda_0) \to M, \quad \Phi(t,\xi_0):= \pi \circ e^{t \vec{G}}(\lambda_0 + \xi_0).
\end{equation}
By construction, $\Phi(\cdot,0)=\gamma(\cdot)$, while the maps $\Phi(\cdot,\xi_0)$ with $\xi_0 \neq 0$ are all the regular abnormal geodesics that branch from $\gamma$ at time $\tau$, restricted to the interval $[0,T]$.

\begin{remark} The following observation will be useful in a coming example. In the above setting, and assuming that $\mathcal{A}_t=\spn\{\lambda_0\} \oplus \mathrm{TNR}(t)$ for all $t \in (0,T]$, then $\Delta_3^\perp \cap \mathcal{A}_t = \mathrm{TNR}(t)$ for all $t \in (0,T]$ and 
\begin{equation}
    \mathrm{B}= \left(\tnr(\tau) \setminus \tnr(\tau+\varepsilon)\right) \cup \{0\}.
\end{equation} 
\end{remark}

\begin{remark}
\label{rk:Sard}
    The set $\mathrm{B}$ defined in \eqref{eq:branching_curves} is an open subset of $\Delta_3^\perp \cap \mathcal{A}_{\tau}$, whose dimension is at most $n-3$ (note that $\Delta_3(\gamma(0)) \neq \Delta_2(\gamma(0))$ since $\gamma$ is a regular abnormal path). As a consequence, the image of the smooth map $\Phi$ has Hausdorff dimension at most $n-2$. 
\end{remark}

\begin{subsection}{An example of the decomposition}\label{sec:exampledecomposition}
We illustrate the decomposition of $\mathcal{A}_t$ defined in \eqref{eq:decomposition}. On $\R^5$ we consider the rank-two sub-Riemannian structure generated by the orthonormal frame
\begin{equation}
    X_1=\partial_1, \quad X_2= \partial_2 + \frac{x_1^2}{2}\partial_3 + \frac{x_1^3}{6}\partial_4 + x_2x_3\partial_5.
\end{equation}
Fix $T>0$ and consider the horizontal curve
\begin{equation}
    \gamma:[0,T] \to \R^5, \quad \gamma(t) = (0,t,0,0,0).
\end{equation}
The abnormal lifts of $\gamma$ are solutions of $\dot{\lambda}(t)=\vec{h}_2(\lambda(t))$, which we write in coordinates $\lambda(t)=(x(t),p(t))$ with $\lambda(0)=((0,0,0,0,0),(\bar{p}_1,\bar{p}_2,\bar{p}_3,\bar{p}_4,\bar{p}_5))$. Writing $\vec{h}_2$ explicitly, we find that $p(\cdot)$ satisfies
\begin{align}
    \begin{cases}
        \dot{p}_1(t)=\dot{p}_2(t)=0, \\
        \dot{p}_3(t)=-tp_5(t), \\
        \dot{p}_4(t)=\dot{p}_5(t)=0,
    \end{cases}
\end{align}
hence  $p_j(t) \equiv \bar{p}_j$ for $j=1,2,4,5$ and $p_3(t)=\bar{p}_3 - \frac{t^2}{2}\bar{p}_5$. Since
\begin{equation}
    X_2(\gamma(t))  =\partial_2,\quad 
    X_{12}(\gamma(t))  =0, \quad     X_{112}(\gamma(t)) = \partial_3, \quad 
    X_{212}(\gamma(t)) =0,
\end{equation}
abnormal lifts must satisfy $\bar{p}_1=\bar{p}_2=0$. We conclude that the abnormal lifts of $\gamma$ are the $3$-parameter family
\begin{equation}
    (x(t),p(t))= \left((0,t,0,0,0),(0,0,\bar{p}_3 - \frac{t^2}{2} \bar{p}_5, \bar{p}_4,\bar{p}_5)\right), \quad (\bar{p}_3,\bar{p}_4,\bar{p}_5) \in \R^3 \setminus \{0\}.
\end{equation}
For all $t \in (0,T]$ we have
\begin{equation}
    \mathcal{A}_t  \cong \R^3_{(\bar{p}_3,\bar{p}_4,\bar{p}_5)},
\end{equation}
while $\mathrm{REG}(t)$ is  characterized by the condition $\bar{p}_3 - \frac{s^2}{2} \bar{p}_5 \neq 0$ for all $s \in [0,t]$, and it is then the union of the two  open convex cones
\begin{equation}
    \mathrm{REG}^+(t) = \{ \bar{p}_3>0, \ \bar{p}_5<2\bar{p}_3/t^2 \}, \qquad \mathrm{REG}^-(t) =  \{ \bar{p}_3<0, \ \bar{p}_5>2\bar{p}_3/t^2 \}
\end{equation}
that are decreasing in time (with respect to inclusion). Notice that $\Delta_3^\perp \cap \mathcal{A}_t$ is the two-dimensional subspace defined by the equation $\bar{p}_3=0$, while the set 
\begin{equation}
    \tnr(t)=\{\bar{p}_3=\bar{p}_5=0\} \setminus \{0\}
\end{equation}
of 
initial data of totally non-regular lifts is one-dimensional and independent of $t$.
Finally, 
\begin{equation}
    \mathrm{L}(t) = \left ( \{ \bar{p}_3 \ge 0, \ \bar{p}_5 \ge 2\bar{p}_3/t^2 \} \cup \{ \bar{p}_3\le0, \ \bar{p}_5\le 2\bar{p}_3/t^2\} \right ) \setminus \{\bar{p}_3=\bar{p}_5=0 \} .
\end{equation}
We note that $t\mapsto \dim(\mathcal{A}_t)$ is constant so that there is no regular abnormal branching in this example. For such examples, we refer to Section \ref{sec:examplesofbranching}.
\end{subsection}

\begin{figure}[H]
     \centering
     \includegraphics[width=220pt]{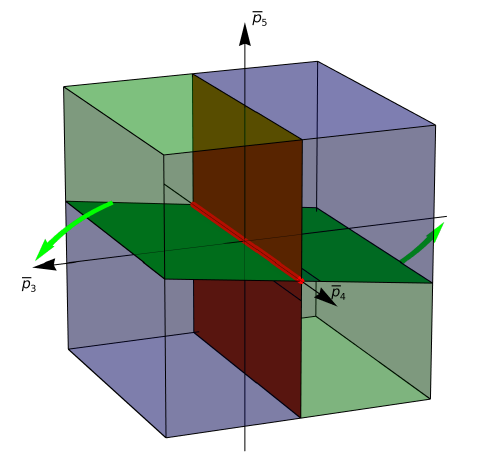}
     \caption{Plot of $\mathrm{REG}(t)$ (blue cone), $\mathrm{TNR}(t)$ (red line), $\mathrm{L}(t)$ (green cone), and $\Delta_3^\perp\cap\mathcal{A}_t$ (vertical plane). The green arrows indicate how the interface between $\mathrm{REG}(t)$ and $\mathrm{L}(t)$ evolves as $t$ increases.}
\end{figure}

\subsection{Branching and strict abnormality}

Let $\gamma:[0,T] \to M$ be a regular abnormal geodesic, and assume that it has regular abnormal branching at time $\tau \in (0,T)$. We provide additional conditions under which the regular abnormal geodesics branching from $\gamma$ are also strictly abnormal. First, we deal with the case when the \emph{common} subsegment $\gamma|_{[0,\tau]}$ itself is strictly abnormal. 

\begin{proposition}
\label{prop:ex1}
    Let $\gamma:[0,T] \to M$ be a regular abnormal geodesic  that branches at time $\tau \in (0,T)$. If $\gamma|_{[0,\tau]}$ is strictly abnormal, then $\gamma$ and all the curves branching from it are strictly abnormal geodesics.  
\end{proposition}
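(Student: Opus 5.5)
The argument is a direct restriction argument: normality of a curve passes to its subsegments, so a normal lift on a longer interval would contradict strict abnormality of the common piece. The only ingredients are the definition of normal lift via the Lagrange multiplier rule \eqref{eq:lagr}, and the propagation formula \eqref{eq:extremallift}.

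\emph{Step 1: restriction of normal lifts.} Let $\eta:[0,S]\to M$ be a horizontal curve with control $u$, and let $\lambda(\cdot)$ be a normal lift of $\eta$. Fix $s\in(0,S]$. We show that $\lambda|_{[0,s]}$ is a normal lift of $\eta|_{[0,s]}$. The characterization recalled in Section~\ref{sec:prel} says $\lambda$ is an integral curve of $\vec H$ with $u_i(t)=h_i(\lambda(t))$ for a.e.\ $t$. Both conditions are local in time, so they persist on $[0,s]$. Equivalently, in terms of the endpoint map, a variation of the control supported in $[0,s]$ gives $\lambda(s)\circ D_{u|_{[0,s]}}E = \langle u|_{[0,s]},\cdot\rangle$. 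In short, normal lifts restrict to normal lifts.

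\emph{Step 2: $\gamma$ is strictly abnormal.} Suppose $\gamma$ admitted a normal lift on $[0,T]$. By Step 1, $\gamma|_{[0,\tau]}$ would be normal, contradicting the hypothesis. Also, $\gamma$ is a regular abnormal geodesic by assumption (Theorem~\ref{pr:liu-sus}), so $\gamma$ is a strictly abnormal geodesic.

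\emph{Step 3: branching curves are strictly abnormal.} Let $\bar\gamma:[0,\tau+\varepsilon_0]\to M$ be any regular abnormal geodesic branching from $\gamma$ at $\tau$, as in Definition~\ref{def:ra_br}. In particular $\bar\gamma|_{[0,\tau]}=\gamma|_{[0,\tau]}$. Then:
\begin{itemize}
\item $\bar\gamma$ is abnormal, since it is regular abnormal.
\item $\bar\gamma$ is a geodesic, by definition or by Theorem~\ref{pr:liu-sus}.
\item If $\bar\gamma$ had a normal lift, Step 1 would give a normal lift of $\bar\gamma|_{[0,\tau]}=\gamma|_{[0,\tau]}$, which is again a contradiction.
\end{itemize}
The same applies to any curve $\Phi(\cdot,\xi_0)$, on whatever interval it is defined.

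\emph{Main subtlety.} The only point needing care is Step 1. One might worry that the normal/abnormal classification depends on the time interval, since the multiplier \eqref{eq:lagr} is taken at the final time. Propagation by $(P^u_{t,T})^*$ makes the lift independent of where we cut. Concretely, the Hamiltonian characterization ($\dot\lambda=\vec H(\lambda)$ with $u=h(\lambda)$) is clearly inherited by subintervals, and it implies \eqref{eq:lagr} on $[0,s]$ with $\nu=1$. I would state this as a one-line remark.
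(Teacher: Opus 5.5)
Your proposal is correct and is essentially the paper's own argument: the paper's proof is the one-line observation that if a subsegment of an abnormal path has no normal lift, then neither does the whole path. Your Step 1 (normal lifts restrict to normal lifts on subintervals) makes explicit the fact the paper leaves implicit, and Steps 2--3 apply it to $\gamma$ and to each branching curve exactly as the paper intends.
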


\begin{proof}
   If a subsegment of an abnormal path has no normal lifts, then also the whole path has no normal lifts.
\end{proof}

Let us now deal with the case when $\gamma$ is both normal and regular abnormal. Combining Theorems \ref{th:n_br} and \ref{th:abn_br} we deduce that $\gamma$ has a normal branching at time $t$ if and only if it has a regular abnormal branching at the same time. In general, some of the geodesics that branch from $\gamma$ might be both normal and regular abnormal (while others are either strictly normal or strictly abnormal). However, in the special case that we now describe, the jump of dimension of $\mathcal{A}_t$ gives rise to distinct family of curves. In this case, the regular abnormal geodesics that branch from $\gamma$ are strictly abnormal. 

\begin{proposition}
\label{prop:strictn}
    Let $\gamma:[0,T] \to M$ be a normal and  regular abnormal geodesic. Assume that the function $t \mapsto \dim(\mathcal{A}_t)$ is discontinuous at time $\tau \in (0,T)$, with   
    \begin{equation}
    \label{eq:ra&n}
        \dim(\mathcal{A}_{\tau})= k+1, \quad \dim(\mathcal{A}_t)=k \quad \forall t \in (\tau,T],
    \end{equation}
    for some $k \ge 1$.
    Let $\bar{\gamma}:[0,T] \to M$ be a normal geodesic that branches from $\gamma$ at time $\tau$. Then, $\bar{\gamma}$ does not admit a regular abnormal lift. In particular, the regular abnormal geodesics that branch from $\gamma$ at time $\tau$ are strictly abnormal.
\end{proposition}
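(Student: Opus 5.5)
The plan is to argue by contradiction. Suppose $\bar\gamma$ is a normal geodesic branching from $\gamma$ at time $\tau$ which also admits a regular abnormal lift $\bar\mu(\cdot)$. Since $\bar\gamma|_{[0,\tau]}=\gamma|_{[0,\tau]}$, the covector $\bar\mu_0:=\bar\mu(0)$ lies in $\mathrm{REG}(\tau)\subset\mathcal{A}_\tau$. By Lemma~\ref{lem:decomp} applied to $\gamma$ with a reference regular abnormal lift $\lambda(\cdot)$, $\lambda_0=\lambda(0)$, we have $\mathcal{A}_\tau=\spn\{\lambda_0\}\oplus(\Delta_3^\perp\cap\mathcal{A}_\tau)$. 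Since $\lambda_0\in\mathcal{A}_t$ for all $t$, the assumption \eqref{eq:ra&n} gives $\mathcal{A}_t\subset\mathcal{A}_\tau$ of codimension one for $t\in(\tau,T]$, with $\mathcal{A}_t\ni\lambda_0$.

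The key step is to compare the normal affine spaces of $\gamma$ and $\bar\gamma$. Fix a normal covector $\eta_0\in\mathcal{N}_T$ for $\gamma$ and a normal covector $\bar\eta_0$ for $\bar\gamma$. Both lie in the normal affine space $\mathcal{N}_\tau$ of $\gamma|_{[0,\tau]}=\bar\gamma|_{[0,\tau]}$, whose underlying vector space is $\mathcal{A}_\tau$. Hence $\bar\eta_0-\eta_0\in\mathcal{A}_\tau$. Moreover $\bar\eta_0\notin\mathcal{A}_t+\eta_0=\mathcal{N}_t(\gamma)$ for $t>\tau$. Otherwise $\bar\eta_0$ would give a normal lift of $\gamma|_{[0,t]}$, and by uniqueness of the Hamiltonian flow of $\vec H$ we would get $\bar\gamma=\gamma$ near $\tau$, contradicting branching. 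Hence $\bar\eta_0-\eta_0\in\mathcal{A}_\tau\setminus\mathcal{A}_t$, so $\mathcal{A}_\tau=\mathcal{A}_t\oplus\spn\{\bar\eta_0-\eta_0\}$.

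Next I use the abnormal space of $\bar\gamma$. Let $\bar{\mathcal{A}}_t$ denote the abnormal space of $\bar\gamma|_{[0,t]}$. Then $\bar{\mathcal{A}}_t\subset\mathcal{A}_\tau$ and $\bar\mu_0\in\bar{\mathcal{A}}_t$. Since $\bar\eta_0+\bar{\mathcal{A}}_t$ is the normal affine space of $\bar\gamma|_{[0,t]}$, the Hamiltonian $H$ would have to be compatible along it. The argument I would try is this: abnormal covectors of $\bar\gamma$ are exactly those $\nu\in\mathcal{A}_\tau$ whose abnormal lift along $\gamma|_{[0,\tau]}$ continues along $\bar\gamma$. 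Since $\bar\gamma$ is regular abnormal, $\bar\mu_0\notin\Delta_3^\perp$, so $\bar\mu_0=\alpha\lambda_0+\xi_0$ with $\alpha\ne0$ and $\xi_0\in\Delta_3^\perp\cap\mathcal{A}_\tau$. The controls after $\tau$ then differ: along $\bar\gamma$ the control is $\pm\h(\bar\mu(t))/|\h(\bar\mu(t))|$, and also $(h_1,h_2)(\bar\eta(t))$. Along $\gamma$ the analogous identities hold with $\lambda$ and $\eta$.

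The main obstacle is closing the contradiction from this data. The cleanest route I would attempt uses the codimension-one structure. Write $\bar\eta_0=\eta_0+s\bar\mu_0+a$, with $a\in\mathcal{A}_t$ and $s\in\R$, which is possible if $\bar\mu_0\notin\mathcal{A}_t$. Then $\bar\eta_0-s\bar\mu_0=\eta_0+a\in\mathcal{N}_t(\gamma)$. But $\bar\eta_0-s\bar\mu_0$ is also a normal covector of $\bar\gamma$, since $\bar\mu_0$ is abnormal for $\bar\gamma$ and normal minus abnormal is normal. The resulting normal extremal therefore projects both to $\gamma$ and to $\bar\gamma$ on $[0,t]$ for small $t>\tau$. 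By uniqueness of $\vec H$-trajectories this forces $\gamma=\bar\gamma$ near $\tau$, a contradiction. If instead $\bar\mu_0\in\mathcal{A}_t$, then $\bar\mu_0$ gives regular abnormal lifts of both $\gamma$ and $\bar\gamma$ past $\tau$, and uniqueness of $\vec G$ (or $-\vec G$) integral lines gives the same contradiction. Both cases rely on the dimension drop being exactly one and on $\bar\mu_0\in\mathcal{A}_\tau$; verifying that these decompositions hold is where I expect the care to be needed. The final claim then follows: a regular abnormal geodesic branching from $\gamma$ that were also normal would be a normal geodesic branching at $\tau$ admitting a regular abnormal lift, which has just been ruled out.
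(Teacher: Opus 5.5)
Your argument is correct and is essentially the paper's proof. The paper views $\mathcal N_t$ as a hyperplane in $\mathcal N_\tau$ and splits according to whether $\mathcal N_t$ and $\bar{\mathcal N}_t$ intersect (common normal lift, contradicted via the $\vec H$-flow) or are parallel, i.e.\ $\bar{\mathcal A}_t\subset\mathcal A_t$ (common regular abnormal covector, contradicted via Proposition~\ref{prop:Imax}); this is exactly the dichotomy you realize concretely through $\bar\mu_0\notin\mathcal A_t$ versus $\bar\mu_0\in\mathcal A_t$, and the decompositions you flag as delicate follow at once from $\bar\mu_0\in\mathcal A_\tau$ and $\dim\mathcal A_\tau-\dim\mathcal A_t=1$.
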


\begin{proof}
    By contradiction, assume that for some $\varepsilon >0$ the curve $\bar{\gamma}|_{[0,\tau+\varepsilon]}$ is also regular abnormal. Let $\mathcal{A}_t, \bar{\mathcal{A}}_t$ be the abnormal vector spaces and $\mathcal{N}_t, \bar{\mathcal{N}}_t$ be the normal affine spaces associated with $\gamma$ and $\bar{\gamma}$, and notice that $\mathcal{N}_t=\bar{\mathcal{N}}_t$ for $t \in [0,\tau]$. By hypothesis, $\mathcal{N}_{\tau}$ has dimension $k+1$, while, up to reducing $\varepsilon$, for $t\in (\tau,\tau+\varepsilon]$, $\mathcal{N}_t$ and $\bar{\mathcal{N}}_t$ are respectively a $k$-dimensional and $m$-dimensional (with $1\le m\le k$) constant affine subspaces of $\mathcal{N}_t$, and either they intersect at some point or they are parallel. In particular, if $\mathcal{N}_t$ and $\bar{\mathcal{N}}_t$ are parallel then $\mathcal{A}_t$ must contain $\bar{\mathcal{A}}_t$. Observe that,  for $t \in (\tau,\tau+\varepsilon]$:
    \begin{itemize}
        \item if there exists $\lambda_0 \in \mathcal{N}_t \cap \bar{\mathcal{N}}_t$ then the curve $[0,t] \ni s \mapsto e^{s \vec{H}}(\lambda_0)$ is a normal lift of both $\gamma|_{[0,t]}$ and $\bar{\gamma}|_{[0,t]}$, which is impossible since their supports branch at $\gamma(\tau)$; 
        \item if instead $\mathcal{N}_t$ and $\bar{\mathcal{N}}_t$ are parallel, there exists $\lambda_0 \in \mathcal{A}_t \cap \bar{\mathcal{A}}_t \cap \mathcal{R}$, since $\bar{\gamma}|_{[0,\tau+\varepsilon]}$ is regular abnormal under our working assumption and $\bar{\mathcal{A}}_t \subset \mathcal{A}_t$. Following Proposition \ref{prop:Imax} we infer that the curve  $[0,t]\ni s\mapsto e^{s\vec{G}}(\lambda_0)$ (or $s \mapsto e^{-s\vec{G}}(\lambda_0)$) is a regular abnormal lift of both $\gamma|_{[0,t]}$ and $\bar{\gamma}|_{[0,t]}$, which also leads to a contradiction.
    \end{itemize} 
    Therefore, $\bar{\gamma}$ admits no regular abnormal lift. 
\end{proof}


\section{Examples of regular strictly abnormal branching}
\label{sec:examplesofbranching}
In this section we give two examples of branching of regular abnormal geodesics. In the two cases,  as an application of the criteria obtained either in Proposition~\ref{prop:ex1} or in Proposition~\ref{prop:strictn}, the branching curves are  strictly abnormal. 

\subsection{First example}\label{sec:exp1}
The example is given by a rank-two sub-Riemannian structure on $\R^4$ with a region where the step is four and one where the step is three. In accordance with Proposition \ref{prop:analytic}, the structure is smooth but not real-analytic. The choice of working in $\R^4$ is motivated by the fact that a rank-two sub-Riemannian structure of step four with $\Delta_2\neq \Delta_3$ and $\Delta_3^\perp \neq \{0\}$ must have dimension at least equal to $4$. In the example, we consider a horizontal curve $\gamma$ which stays in the  step-four region before time $\tau>0$ and enters the step-three region after time $\tau$. In this example, $\gamma|_{[0,\tau]}$ has a two-parameter family of abnormal lifts, generated by a fixed regular abnormal lift $\lambda(\cdot)$ of $\gamma$ and a totally non-regular lift in $\Delta_3^\perp$. In particular $\tnr(\tau)$ is nontrivial, while for $t > \tau$, $\Delta_3^\perp(\gamma(t))$ becomes trivial, and also $\tnr(t)$ as a consequence. Namely,
\begin{equation}
\label{eq:Aex1}
\mathcal{A}_t =  \spn\{\lambda(0)\} \oplus (\tnr(t)\sqcup \{0\}) \quad \forall t, \quad \tnr(\tau) \neq \emptyset, \quad \tnr(t)= \emptyset \quad \forall t >\tau.
\end{equation}

By Theorem \ref{th:abn_br}, $\gamma$ must have a regular abnormal branching at time $\tau$. Since in the example $\gamma|_{[0,\tau]}$ is strictly abnormal, then all geodesics branching from it are strictly abnormal as well thanks to Proposition \ref{prop:ex1}. In building our example, we were inspired by a strategy given in \cite{rifford2014sub} to produce strict abnormals, that we briefly recall. Consider the Martinet-type distribution on $\R^3$ generated by
\begin{equation}
    X_1=\partial_1, \quad X_2=(1+x_1\phi(x))\partial_2 + x_1^2\partial_3,
\end{equation}
where $\phi$ is a smooth function on $\R^3$. This is simply a perturbation of the flat Martinet distribution, which   corresponds to the case $\phi\equiv 0$. Let $\gamma(t)=(0,t,0)$ for $t\in\R$. It is proved in \cite{rifford2014sub} that, if $\phi(\gamma(t))\neq 0$ at some $t\in \R$, then there exists $\varepsilon>0$ such that $\gamma|_{[t,t+\varepsilon]}$  is a strictly abnormal length minimizer. 
\\

Let us turn to the example. Consider the step-four sub-Riemannian structure on $\R^4$ defined by the global orthonormal frame
\begin{equation}
\label{eq:A1}
    X_1 = \partial_1, \quad X_2= (1+x_1x_2)\partial_2 + \frac{x_1^2}{2}\partial_3 + \frac{x_1^3}{6}\partial_4 
    \tag{A1}
\end{equation}
and the step-three one defined by
\begin{equation}
\label{eq:B1}
    X_1 = \partial_1, \quad X_2= (1+x_1x_2)\partial_2 + \frac{x_1^2}{2}\partial_3 + x_1x_2\partial_4.
    \tag{B1}
\end{equation}
Let $\theta:\R \to [0,1]$ be a smooth transition function such that $\theta(y)=0$ if $y \le 0$, $\theta(y)=1$ if $y \ge 1$, and $\theta'(y)>0$ for $0<y<1$, and consider the distribution on $\R^4$ generated by
\begin{equation}
\label{eq:T1}
    X_1=\partial_1, \quad X_2= (1+x_1x_2)\partial_2 + \frac{x_1^2}{2}\partial_3 +x_1A(x_1,x_2)\partial_4,
\tag{T1}
\end{equation}
where 
\begin{equation}
    A(x_1,x_2)=(1-\theta(x_2))\frac{x_1^2}{6}+\theta(x_2)x_2.
\end{equation}

The distribution generated by \eqref{eq:T1} is simply the transition between the ones generated by \eqref{eq:A1} on $\{x_2\leq 0\}$ and \eqref{eq:B1} on $\{x_2 \geq 1\}$.  Fix $T>\tau>0$ and consider the horizontal curve
\begin{equation}
    \gamma:[0,T]\to \R^4, \quad \gamma(t)=(0,t-\tau,0,0).
\end{equation}
Notice that $\gamma$ has unit speed and, as anticipated, $\gamma(t)$ belongs to a region of step four for $t\le \tau$ and step three for $t > \tau$. Indeed, an explicit computation shows that
\begin{align}
    X_2(\gamma(t)) &=\partial_2, &  X_{12}(\gamma(t))& = (t-\tau)\partial_2 + \theta(t-\tau)(t-\tau)\partial_4, \\
    X_{112}(\gamma(t))& = \partial_3, & X_{212}(\gamma(t))& = \partial_2 + (\theta'(t-\tau)(t-\tau)+ \theta(t-\tau))\partial_4.
\end{align}

\begin{proposition}\label{prop:strictabn}
    The curve 
    $\gamma|_{[0,\tau]}$ has no normal lifts.
\end{proposition}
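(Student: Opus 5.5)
The plan is to argue by contradiction using the Hamiltonian characterization of normal lifts. If $\gamma|_{[0,\tau]}$ had a normal lift, its control would be determined by $u_i(t)=h_i(\lambda(t))$, and this would force successive brackets of $h_1,h_2$ to vanish along the lift. Evaluating one of these brackets with the explicit frame along $\gamma$ should then give a nonzero value, which is the contradiction.

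\emph{Step 1.} Suppose $\lambda:[0,\tau]\to T^*\R^4$ is a normal lift of $\gamma|_{[0,\tau]}$. The control of $\gamma$ is $u\equiv(0,1)$, since $\dot\gamma=\partial_2=X_2(\gamma(t))$. Hence $h_1(\lambda(t))=0$ and $h_2(\lambda(t))=1$ for all $t\in[0,\tau]$. Since $\lambda$ is an integral line of $\vec H$, every smooth function $h_X$ satisfies
\[
\frac{d}{dt}h_X(\lambda(t))=h_1(\lambda(t))\,h_{1X}(\lambda(t))+h_2(\lambda(t))\,h_{2X}(\lambda(t)).
\]
Along $\lambda$ this reduces to $\frac{d}{dt}h_X(\lambda(t))=h_{2X}(\lambda(t))$.

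\emph{Step 2.} Take $X=X_1$. Differentiating $h_1\equiv0$ gives $h_{21}=-h_{12}\equiv0$ along $\lambda$. Now differentiate $h_{12}\equiv 0$. This gives
\[
h_{212}(\lambda(t))=0 \quad \text{for all } t\in[0,\tau],
\]
and in particular on an open interval, so we may avoid the endpoint issue if needed.

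\emph{Step 3.} Evaluate $h_{212}$ using the explicit computation stated just before the proposition. For $t\le\tau$ we have $t-\tau\le 0$, hence $\theta(t-\tau)=\theta'(t-\tau)=0$. Therefore
\[
X_{212}(\gamma(t))=\partial_2=X_2(\gamma(t)) \quad \text{for } t\in[0,\tau].
\]
It follows that
\[
h_{212}(\lambda(t))=\langle\lambda(t),\partial_2\rangle=h_2(\lambda(t))=1,
\]
which contradicts Step 2. So $\gamma|_{[0,\tau]}$ has no normal lift.

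The only point requiring care is the use of the bracket formula for $X_{212}$ along $\gamma$ on the closed interval $[0,\tau]$. The structure there is \eqref{eq:T1}, which coincides with \eqref{eq:A1} on $\{x_2\le 0\}$. At $t=\tau$ the formula follows either from the explicit expression, using that $\theta$ and $\theta'$ vanish at $0$, or by continuity from $t<\tau$. Even without this, the contradiction already holds at any single $t<\tau$. I therefore do not expect any real obstacle beyond checking the sign conventions in the derivative formula. Those conventions do not matter, since each step only uses that a vanishing quantity has vanishing derivative.
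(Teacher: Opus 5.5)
Your proof is correct and is essentially the paper's argument. The paper writes the Hamiltonian system for \eqref{eq:A1} in coordinates, where the equation for $\dot p_1$ is exactly $\frac{d}{dt}h_1=-h_2h_{12}$ with $h_{12}=x_2p_2=(t-\tau)h_2$ along $\gamma$, and it gets the contradiction from $h_2\neq 0$ and $x_2(0)=-\tau\neq 0$; you express the same mechanism through brackets and differentiate once more, using $X_{212}=X_2$ along $\gamma$ instead of evaluating $h_{12}$ at a point where $x_2\neq 0$.
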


\begin{proof}
    Since $\gamma|_{[0,\tau]}$ is contained in a region where \eqref{eq:A1} and \eqref{eq:T1} coincide, we work only with the distribution \eqref{eq:A1}. Assume by contradiction that there exists a normal lift, that we write in coordinates $\lambda(t)=(x(t),p(t))=((0,t-\tau,0,0),(p_1(t),p_2(t),p_3(t),p_4(t)))$. This lift solves the Hamiltonian system associated with the sub-Riemannian Hamiltonian defined by $2H(x,p)=h_1(x,p)^2+h_2(x,p)^2$, where
    \begin{equation}
        h_1(x,p)= p_1, \quad h_2(x,p)= (1+x_1x_2)p_2 + \frac{x_1^2}{2}p_3 + \frac{x_1^3}{6}p_4.
    \end{equation}
    The Hamiltonian system reads
    \begin{equation}
    \label{eq:systex1}
        \begin{cases}
            \dot{x}_1 = p_1, \\
            \dot{x}_2 =h_2(x,p)(1+x_1x_2), \\
            \dot{x}_3 =h_2(x,p)\frac{x_1^2}{2}, \\
            \dot{x}_4 =h_2(x,p)\frac{x_1^3}{6}, \\
            \dot{p}_1=-h_2(x,p)\left(x_2p_2+x_1p_3+\frac{x_1^2}{2}p_4 \right), \\
            \dot{p}_2=-h_2(x,p)x_1p_2, \\
            \dot{p}_3=\dot{p}_4=0.
        \end{cases}
    \end{equation}
    Letting $\lambda(0)=((0,-\tau,0,0),(\bar{p}_1,\bar{p}_2,\bar{p}_3,\bar{p}_4))$ we have $p_3(t)\equiv \bar{p}_3$, $p_4(t) \equiv \bar{p}_4$, and 
    \begin{equation}
        H(x(t),p(t))=H(x(0),p(0))=\frac{1}{2}\left(\bar{p}_1^2+\bar{p}_2^2 \right) \neq 0 \quad \forall t \in [0,\tau]. 
    \end{equation}
    Since $x_1(t)\equiv0$ we deduce that $p_1(t) \equiv \bar{p}_1= 0$ from the first equation. Therefore since the Hamiltonian is constant we have $p_2(t)\equiv \bar{p}_2 \neq 0$. Finally, the fifth equation gives $0 \equiv -\bar{p}_2^2 x_2(t)$. However,  $x_2(0)=-\tau \neq 0$, which implies $\bar{p}_2 =0$, leading to a contradiction.
\end{proof}

We prove \eqref{eq:Aex1}. Abnormal lifts of $\gamma$ are solutions of $\dot{\lambda}(t) = \vec{h}_2(\lambda(t))$. Writing $\lambda(t)=(x(t),p(t))$ with $\lambda(0)=((0,-\tau,0,0),(\bar{p}_1,\bar{p}_2,\bar{p}_3,\bar{p}_4))$ we find that $p(t)$ satisfies
\begin{align}
    \begin{cases}
    \dot{p}_1(t) = - (t-\tau)\big(p_2(t) + \theta(t-\tau)p_4(t)\big), \\
    \dot{p}_2(t) =\dot{p}_3(t)=\dot{p}_4(t)=0,   
    \end{cases}
\end{align}
hence $(p_2(t),p_3(t),p_4(t)) \equiv (\bar{p}_2,\bar{p}_3,\bar{p}_4)$. Imposing that $\lambda(t) \in \Delta^\perp(\gamma(t))$ and recalling that $X_1=\partial_1$ and $X_2 = \partial_2$ along $\gamma$, we find 
 $p_1(t)\equiv 0 \equiv p_2(t)$, and then
\begin{equation}
\label{eq:p1}
    0= p_1(t) = -\bar{p}_4\int_0^t (s-\tau)\theta(s-\tau)\,ds.
\end{equation}
Depending on the restriction 
of $\gamma$ that we consider, we may impose some additional constraints on the initial covector. We distinguish between two cases:
\begin{itemize}
    \item If we consider 
    $\gamma|_{[0,t]}$ with $t\le \tau$ by the definition of $\theta$ we have 
     that \eqref{eq:p1} is satisfied regardless of the choice of $\bar{p}_4$ and we find a two-parameter family of abnormal lifts, namely,
        \begin{equation}
            \lambda(s)=\big( (0,s-\tau , 0,0) , (0,0,\bar{p}_3, \bar{p}_4)\big ) , \quad (\bar{p}_3,\bar{p}_4) \in \R^2 \setminus \{(0,0)\}.
        \end{equation}
        Notice that these lifts are regular abnormal  if and only if $\bar{p}_3 \neq 0$, while, omitting the base-point from the notation of covectors, it holds
                \begin{equation}
            \tnr(t) =  \{(0,0,0,\bar{p}_4)\mid \bar{p}_4 \neq 0\} \quad \forall t\le \tau.
        \end{equation}
    \item If we consider 
    $\gamma|_{[0,t]}$ with $t > \tau$, the integral in \eqref{eq:p1} becomes strictly positive after time $\tau$, and this imposes $\bar{p}_4=0$. Therefore the segment $\gamma|_{[0,t]}$ has only a one-parameter family of abnormal lifts that are also regular, which is 
        \begin{equation}
            \lambda(s)= \big( (0,s-\tau , 0,0) , (0,0,\bar{p}_3,0)\big ), \quad \bar{p}_3 \neq 0. 
        \end{equation}
        In particular $\tnr(t)$ is trivial as soon as $t>\tau$.
\end{itemize}
        
Summarizing, for $t \le \tau$ we have
\begin{equation}
    \mathcal{A}_t = \mathrm{REG}(t) \sqcup \tnr(t) \sqcup \{0\},
\end{equation}
with $\mathrm{REG}(t)= \{(0,0,\bar{p}_3,\bar{p}_4) \mid \bar{p}_3 \neq 0,\, \bar{p}_4 \in \R\}$ and  $ \tnr(t)=\{(0,0,0,\bar{p}_4)\mid \bar{p}_4 \neq 0\}$, while for $t>\tau$ it holds 
\begin{equation}
    \mathcal{A}_t= \mathrm{REG}(t) \sqcup \{0\},
\end{equation}
with $\mathrm{REG}(t)=\{(0,0,\bar{p}_3,0) \mid \bar{p}_3 \neq 0\}$ and $\tnr(t)= \emptyset$.  This proves \eqref{eq:Aex1} and the fact that $\gamma$ has a regular abnormal branching at time $\tau$  by Theorem \ref{th:abn_br}. Furthermore, by Proposition \ref{prop:strictabn} and Proposition \ref{prop:ex1}, all branching geodesics are strictly abnormal.

\begin{figure}[H]
     \centering
     \includegraphics[width=200pt]{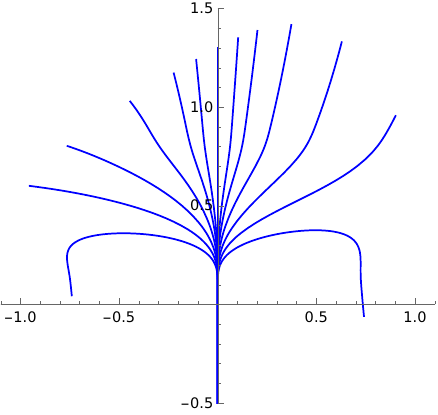}
     \caption{Projection on the $(x_1,x_2)$ plane of the regular abnormal geodesics branching from $\gamma$.}
\end{figure}

Recall the description of the regular abnormal branching given in Section \ref{sect:br_descr}. Let us fix $\lambda_0:= ((0,-\tau,0,0),(0,0,1,0)) \in T_{\gamma(0)}^*\R^4$  and define 
\begin{equation}
    \xi_\alpha = \big ((0,-\tau,0,0), (0,0,0,\alpha) \big), \quad  \lambda_\alpha = \lambda_0+\xi_\alpha, \quad \alpha \neq 0.
\end{equation}
We have that
\begin{equation}
    \tnr(\tau)= \{ \xi_\alpha \mid \alpha \neq 0  \}.
\end{equation}
Identifying $\tnr(\tau) \sqcup \{0\}$ and  $\R$, since $\tnr(t) = \emptyset$ for $t > \tau$ the family of geodesics that branch from $\gamma$ at time $\tau$ is given, with a slight abuse of notation, by the map $\Phi : ([0,T] \times \R) \cap \mathcal{D}(\lambda_0) \to \R^4$ defined as
\begin{equation}
    \Phi(t,\alpha)= \pi\circ e^{t\vec{G}}(\lambda_\alpha). 
\end{equation}

We now prove that this map embeds a $2$-dimensional surface locally around the $x_2$-axis. The following proposition is the analog of \cite[Proposition 11]{Mietton-Rizzi}.


\begin{proposition}\label{prop:embedding}
    The above defined map $\Phi$ is an embedding in a neighborhood of any point $(t,0)$ with $\tau<t<T$.
\end{proposition}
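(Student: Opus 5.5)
The plan is to show that $\Phi$ is an immersion at every point $(t,0)$ with $\tau<t<T$. By the local immersion theorem this already implies that $\Phi$ restricted to a small neighbourhood of $(t,0)$ is an embedding, since an injective immersion of a small ball is a homeomorphism onto its image. The $\Phi$ considered here is smooth near $(t,0)$: it is the composition of $\pi$ with the flow of $\vec G$ applied to the affine family $\lambda_\alpha$, and $(t,\lambda_0)$ lies in the open set $\mathcal D$ because $\lambda_0$ generates the regular lift of $\gamma$ on all of $[0,T]$. The first partial derivative is immediate: $\partial_t\Phi(t,0)=\dot\gamma(t)=\partial_2$. The whole proof therefore reduces to showing that $\partial_\alpha\Phi(t,0)$ is not a multiple of $\partial_2$. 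I will in fact aim for the stronger statement that its $x_1$-component is nonzero.

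To compute $\partial_\alpha\Phi(t,0)$ I will linearize the flow of $\vec G$ along the reference lift $\lambda(s)=e^{s\vec G}(\lambda_0)=((0,s-\tau,0,0),(0,0,1,0))$. Write $(\delta x(s),\delta p(s)):=\partial_\alpha|_{\alpha=0}e^{s\vec G}(\lambda_\alpha)$. On $[0,\tau]$ the analysis of $\mathcal A_t$ above gives $e^{s\vec G}(\lambda_\alpha)=((0,s-\tau,0,0),(0,0,1,\alpha))$, because $\lambda_\alpha\in\mathrm{REG}^+(\tau)$. Hence $\delta x\equiv0$ and $\delta p\equiv(0,0,0,1)$ on $[0,\tau]$, which provides the initial data at $s=\tau$. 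For $s\ge\tau$ the linearized system follows from $\dot x=u_1X_1+u_2X_2$ and $\dot p=-\partial_x(u_1h_1+u_2h_2)$, with $u_1=-h_{212}/|\h|$ and $u_2=h_{112}/|\h|$. Along the reference lift, the formulas for $X_{112},X_{212}$ along $\gamma$ give $h_{112}=1$ and $h_{212}=0$, so $u=(0,1)$. Since $X_2$ has no $\partial_1$-component, $\delta\dot x_1=\delta u_1=-\delta h_{212}$, the last equality because $h_{212}=0$ and $h_{112}=1$ along the reference lift.

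Expanding $\delta h_{212}=d h_{212}(\delta x,\delta p)$ along $\lambda(s)$, the $\delta p$-contribution equals $\delta p_2+c(s)\,\delta p_4$, where $c(s):=\theta'(s-\tau)(s-\tau)+\theta(s-\tau)$ is read off from the coordinates of $X_{212}(\gamma(s))$. Here $c(s)>0$ for $s>\tau$ and $c\equiv0$ for $s\le\tau$. I then want to show the following, using the explicit form of $X_2$ in \eqref{eq:T1}: along the reference lift, the linearized equations for the pair $(\delta x_1,\delta p_1)$ couple with $(\delta p_2,\delta p_4)$ only through terms proportional to $p_2,p_4,x_1$, which vanish on the reference lift. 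In particular, $\dot p_4=-u_2\,\partial_{x_4}h_2=0$ identically, so $\delta p_4\equiv1$. Likewise $\delta p_2$ should satisfy a homogeneous linear equation with zero initial datum, giving $\delta p_2\equiv0$.

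The main obstacle is the $\delta x$-dependent part of $\delta h_{212}$, in particular terms of the form $(\partial_{x_1}h_{212})\,\delta x_1$ and $(\partial_{x_2}h_{212})\,\delta x_2$. These may feed back into $\delta x_1$ and could in principle cancel the forcing $-c(s)$ at some later time $t$. My first attempt is to compute these coefficients explicitly from \eqref{eq:T1} along $x_1=0$, $p=(0,0,1,0)$, hoping that they vanish because each term carries a factor $p_2$, $p_4$ or $x_1$. If they do, then $\delta x_1(t)=-\int_\tau^t c(s)\,ds<0$ for all $t>\tau$ and the proof is complete. If nonzero coefficients survive, I will instead write the closed linear system for $(\delta x_1,\delta p_1)$ with forcing $-c(s)$, zero initial data at $\tau$, and use a sign/comparison argument (or the explicit structure \eqref{eq:B1} on $x_2\ge1$) to show $\delta x_1(t)\neq0$. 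This is the step I expect to be delicate.
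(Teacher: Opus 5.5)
Your route is essentially the paper's. You show $\partial_t\Phi(t,0)=\partial_2$, and that $\partial_\alpha\Phi(t,0)$ has nonzero $\partial_1$-component because $\dot x_1=u_1=-h_{212}/|\h|$ along the flow, with $\delta p_4\equiv1$ and $\delta p_2\equiv0$; the paper uses $\delta p_2\equiv0$ only implicitly.

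The step you flag as delicate, and leave unresolved, is in fact immediate. In $h_{212}=p_2+p_4\bigl(\theta'(x_2)(-\tfrac{x_1^2}{2}-\tfrac{x_1^3x_2}{3}+x_2)+\theta(x_2)\bigr)$, every $x$-dependent term carries the factor $p_4$, which vanishes on the reference lift. Hence $\partial_x h_{212}=0$ there and your first attempt succeeds: $\partial_\alpha x_1^\alpha(t)|_{\alpha=0}=-\int_\tau^t c(s)\,ds=-(t-\tau)\theta(t-\tau)<0$ for $t>\tau$. The fallback comparison argument is not needed.
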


\begin{proof}
    The equality $\Phi(t,0)=(0,t-\tau,0,0)$ implies that $\frac{\partial\Phi}{\partial t}(t,0)=\partial_2$. We prove that $\frac{\partial\Phi}{\partial \alpha}(t,0)$ has nonzero component along $\partial_1$ for $t>\tau$. Denote $e^{t\vec{G}}(\lambda_\alpha)=(x^\alpha(t),p^\alpha(t))$. We have
    \begin{equation}
        x_1^\alpha(t) = - \int_\tau^t \frac{h_{212}(x^\alpha(s),p^\alpha(s))}{|\h(x^\alpha(s),p^\alpha(s))|} ds. 
    \end{equation}
    An explicit computation shows that
    \begin{align}
        h_{112}(x,p)& = p_3 + p_4x_1(1-\theta(x_2)) \\
        h_{212}(x,p) &= p_2 + p_4 \left( \theta'(x_2) \left(-\frac{x_1^2}{2} - \frac{x_1^3x_2}{3} +x_2 \right) +\theta(x_2) \right),
    \end{align}
    and combining this with the observation that the $p_3,p_4$ components of the covectors are constant along the flow of $\vec{G}$ we find that
    \begin{equation}
        \frac{h_{212}(x^\alpha(t),p^\alpha(t))}{|\h(x^\alpha(t),p^\alpha(t))|} = \alpha f(t,\alpha),
    \end{equation}
    where $f$ is a smooth function of $(t,\alpha)$ when $\alpha$ is small and 
    \begin{equation}
        f(t,0)= \theta'(t-\tau)(t-\tau)+\theta(t-\tau) > 0 \quad \text{when  } t>\tau.
    \end{equation}
    Therefore,
    \begin{equation}
        \left.\frac{\partial x_1^\alpha(t)}{\partial \alpha} \right |_{\alpha=0} = -\int_0^t (\theta'(s)s+\theta(s)) \ ds \ <0 \quad \text{for } t>\tau. 
        \qedhere
    \end{equation} 
\end{proof}


\subsection{Second example}\label{sec:exp2}
Consider the step-four distribution on $\R^4$ generated by
\begin{equation}
\label{eq:A2}
    X_1=\partial_1, \quad X_2=\partial_2 + \frac{x_1^2}{2}\partial_3+ \frac{x_1^3}{6}\partial_4,
    \tag{A2}
\end{equation}
and the step-three one generated by
\begin{equation}
\label{eq:B2}
    X_1=\partial_1, \quad X_2=\partial_2 + \frac{x_1^2}{2}\partial_3+ x_1x_2\partial_4.
    \tag{B2}
\end{equation}
Let $\theta$ be a smooth transition function as in the previous example. Consider the sub-Riemannian structure on $\R^4$ defined by the global orthonormal frame
\begin{equation}
\label{eq:T2}
    X_1=\partial_1, \quad X_2= \partial_2 + \frac{x_1^2}{2}\partial_3+ x_1A(x_1,x_2)\partial_4,
    \tag{T2}
\end{equation}
where 
\begin{equation}
    A(x_1,x_2)=(1-\theta(x_2))\frac{x_1^2}{6}+ \theta(x_2)x_2.
\end{equation}
Again, this distribution is simply the transition between the distributions generated by \eqref{eq:A2} on $x_2\leq 0$ and \eqref{eq:B2} on $x_2\geq 1$. Fix $T>\tau>0$ and consider the curve
\begin{equation}
    \gamma:[0,T] \to \R^4, \quad \gamma(t)=(0,t-\tau,0,0).
\end{equation}
Along $\gamma$ we compute
\begin{align}
    X_2(\gamma(t)) & =\partial_2, \quad X_{12}(\gamma(t)) = \theta(t-\tau)(t-\tau)\partial_4, \\
    X_{112}(\gamma(t)) &= \partial_3, \quad X_{212}(\gamma(t)) = (\theta'(t-\tau)(t-\tau)+ \theta(t-\tau))\partial_4,
\end{align}
thus $\gamma(t)$ lies in a region where the step is $4$ for $t \le \tau$ and where the step is $3$ for $t>\tau$. \\

An explicit computation shows that for $t \le \tau$ the restriction $\gamma|_{[0,t]}$ has a two-parameter family of normal lifts, given in coordinates by
\begin{equation}
    \lambda (s) = (x(s),p(s))= \big( (0,s-\tau,0,0), (0,1,\bar{p}_3,\bar{p}_4)), \quad (\bar{p}_3,\bar{p}_4) \in \R^2,
\end{equation}
which corresponds to an affine plane $\mathcal{N}_t$ in $T^*_{\gamma(0)}M$. As soon as $t>\tau$, the space of normal lifts loses dimension, restricting to the one-parameter family
\begin{equation}
    \lambda (s) = (x(s),p(s))= \big( (0,s-\tau,0,0), (0,1,\bar{p}_3,0)), \quad \bar{p}_3 \in \R.
\end{equation}

Let us turn to the abnormal lifts of $\gamma$. 
Again, they are solutions of $\dot{\lambda}(t) = \vec{h}_2(\lambda(t))$, and writing $\lambda(t)=(x(t),p(t))$ with $\lambda(0)=((0,-\tau,0,0),(\bar{p}_1,\bar{p}_2,\bar{p}_3,\bar{p}_4))$, we find
\begin{align}
    \begin{cases}
        \dot{p}_1(t)= -\theta(t-\tau)(t-\tau)p_4(t), \\
        \dot{p}_2(t)=\dot{p}_3(t)=\dot{p}_4(t)=0,
    \end{cases}
\end{align}
hence $(p_2(t),p_3(t),p_4(t)) \equiv (\bar{p}_2,\bar{p}_3,\bar{p}_4)$. Imposing $\lambda(t) \in \Delta^\perp(\gamma(t))$ we find that 
 $p_1(t)\equiv 0 \equiv p_2(t)$, and 
\begin{equation}
   0= p_1(t) = - \bar{p}_4 \int_0^t \theta(s-\tau)(s-\tau) \, ds.
\end{equation}
Again, we distinguish between two cases:
\begin{itemize}
    \item If we consider 
    $\gamma|_{[0,t]}$ with $t < \tau$, then $p_1 \equiv0$ on $[0,t]$ without further restrictions on the initial covector. The curve $\gamma|_{[0,t]}$ has a two-parameter family of abnormal lifts, namely,
        \begin{equation}
            \lambda(s) = \big( (0,s-\tau,0,0), (0,0,\bar{p}_3,\bar{p}_4) \big), \quad (\bar{p}_3,\bar{p}_4) \in \R^2 \setminus \{0\},
        \end{equation}
        which are regular if $\bar{p}_3 \neq 0$, with $\tnr(t) =  \{(0,0,0,\bar{p}_4)\mid \bar{p}_4 \neq 0\}$. 
    \item If we consider 
    $\gamma|_{[0,t]}$ with $t > \tau$, we need also to impose $\bar{p}_4=0$. In this case, there is only a one-parameter family of abnormal lifts, that are also regular, namely,
        \begin{equation}
            \lambda(s) = \big( (0,s-\tau,0,0), (0,0,\bar{p}_3,0) \big), \quad \bar{p}_3 \neq 0,
        \end{equation}
        and $\tnr(t) = \emptyset$.
\end{itemize}
Notice that this is coherent with the drop of dimension of the space of normal lifts. The decomposition of $\mathcal{A}_t$ is the same as in the previous example. Again, \eqref{eq:Aex1} holds and $\gamma$ has both a normal and a regular abnormal branching at time $\tau$ (indeed, both Theorems \ref{th:abn_br} and \ref{th:n_br} apply). By Proposition \ref{prop:strictn}, the regular abnormal geodesics that branch from $\gamma$ at time $\tau$ are strictly abnormal. \\

 \begin{figure}[H]
     \centering
     \includegraphics[width=170pt]{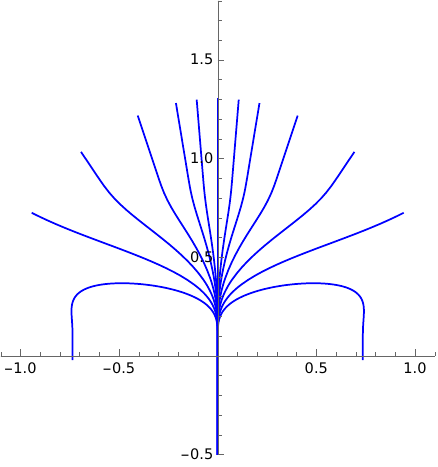} \quad
     \includegraphics[width=170pt]{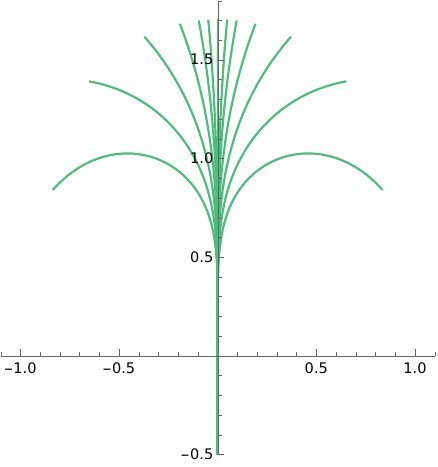}
     \caption{Projection on the $(x_1,x_2)$ plane of the regular (strictly) abnormal 
     (blue, left) and normal geodesics (green, right) branching from $\gamma$.}
     \label{fig:placeholder}
 \end{figure}

Defining 
 $\Phi$ as in the previous example (see Proposition \ref{prop:embedding}), it is possible to prove that it is a local embedding at any point $(t,0)$ with $\tau<t<T$.

\section{Local branching}
\label{sect:local_branching}
In the study of branching of geodesics in sub-Riemannian manifolds it is possible to give the following definition of branching, which can be applied to geodesics in general and does not refer to the existence of lifts. It can be also taken as a definition of branching in the more general class of metric spaces.
\begin{definition}
\label{def:branching_gen}
    A geodesic $\gamma:[0,T]\to M$ \emph{branches} at time $\tau \in (0,T)$ if there exist $\varepsilon_0 >0$ and a geodesic $\bar{\gamma}:[0,\tau+\varepsilon_0] \to M$ such that $\bar{\gamma} |_{[0,\tau]} = \gamma|_{[0,\tau]}$ and $\bar{\gamma} |_{[0,\tau+\varepsilon]} \neq \gamma|_{[0,\tau+\varepsilon]}$  for all $0< \varepsilon \le \varepsilon_0$.
\end{definition}
Let us introduce a notion of branching that sits in between Definition \ref{def:branching_gen} and the Definitions \ref{def:n_br} and \ref{def:ra_br} of normal and regular abnormal branching.

This new approach is motivated by two observations. The first one is that a global definition of the vector field $\vec{G}$ (and therefore of regular abnormal paths) requires the existence of a global orthonormal frame for the distribution. Such a basis does not exist, in general, not even on a neighborhood of a fixed geodesic $\gamma$, but it is always possible to find one if we reduce to a neighborhood of a sufficiently short subsegment of $\gamma$. The second observation is that, as detailed later in this section, with techniques similar to those used in \cite{Mietton-Rizzi} and in Sections \ref{sec:exp1} and \ref{sec:exp2}, it is possible to produce examples of branching (in the sense of Definition \ref{def:branching_gen}) of normal and regular abnormal geodesics that rely only on information on a subsegment of $\gamma$ and cannot be detected via Theorem \ref{th:n_br} and Theorem \ref{th:abn_br} (even when a global orthonormal frame exists). \\

Let us focus on this second motivation. In the following we assume the setting of Section~\ref{sec:prel}, namely that of a sub-Riemannian structure of rank $2$ on a smooth manifold $M$, admitting for simplicity a global frame, which allows us to use Definition~\ref{def:reg_abn} of regular abnormals.
\begin{definition}
\label{def:loc_br}
    A geodesic $\gamma:[0,T]\to M$ has a \emph{local normal (resp.\ regular abnormal) branching} at time $\tau \in (0,T)$ if there exist an interval $[t_1,t_2] \subset [0,T]$ with $\tau \in (t_1,t_2)$ and a normal (resp.\ regular abnormal) geodesic $\tilde{\gamma}:[t_1,t_2] \to M$ that branches from $\gamma|_{[t_1,t_2]}$ at time $\tau$.
\end{definition}

 Notice that we do not require $\gamma|_{[t_1,t_2]}$ to be normal or regular abnormal. When  $\gamma|_{[t_1,t_2]}$ is normal (resp.\ regular abnormal), the branching of $\tilde{\gamma}$ from $\gamma|_{[t_1,t_2]}$ is characterized by Theorem \ref{th:n_br} (resp.\ Theorem \ref{th:abn_br}). Observe that if $\gamma$ has a local normal (resp.\ regular abnormal) branching at time $\tau$, we can consider a geodesic $\tilde{\gamma}$ that branches from  $\gamma|_{[t_1,t_2]}$ at time $\tau$ and denote as $\bar{\gamma} = \gamma|_{[0,t_1]} \ast \tilde{\gamma}$ the concatenation
\begin{align}
    \bar{\gamma}(t) = 
    \begin{cases}
        \gamma(t) & \text{if } 0 \le t \le t_1, \\
        \tilde{\gamma}(t) & \text{if } t_1<t\le t_2.
    \end{cases}
\end{align}
The curve $\bar{\gamma}$ is a geodesic that branches from $\gamma$ at time $\tau$, in the sense of Definition \ref{def:branching_gen}, but it does not need to be a normal or regular abnormal geodesic; in particular, such branching is not detected by Theorem \ref{th:n_br} (resp.\ Theorem \ref{th:abn_br}). We now illustrate this fact with an example.  \\

The example is inspired by the one  in \cite{Mietton-Rizzi}, that we briefly recall. On $\R^3$ we consider the sub-Riemannian structure generated by the global orthonormal frame
\begin{equation}
\label{eq:norm_trans}
    X_1=\partial_1, \quad X_2=\partial_2 + x_1 A(x_1,x_2)\partial_3,
\end{equation}
where 
\begin{equation}\label{eq:Ax1x2}
    A(x_1,x_2)=(1-\theta(x_2))\frac{x_1}{2} + \theta(x_2).
\end{equation}
and $\theta:\R\to [0,1]$ is a smooth transition function such that $\theta(y)=0$ if $y \le 0$, $\theta(y)=1$ if $y \ge 1$, and $\theta'(y)>0$ for $0<y<1$. Such distribution coincides with the flat Martinet distribution on $\{x_2\le 0\}$ and the Heisenberg distribution on $\{x_2\ge1\}$. Consider the horizontal curve 
\begin{equation}
\label{eq:gamma}
    \gamma:[-1,1] \to \R^3, \quad \gamma(t)=(0,t,0).
\end{equation}
The Hamilonian system associated with \eqref{eq:norm_trans} is
\begin{align}
\label{eq:syst}
    \begin{cases}
        \dot{x}_1=p_1 \\
        \dot{x}_2=h_2(x,p) \\
        \dot{x}_3=h_2(x,p)x_1A(x_1x_2) \\
        \dot{p}_1=-h_2(x,p)((1-\theta(x_2))x_1+\theta(x_2))p_3\\
        \dot{p}_2=-h_2(x,p)\theta'(x_2)(-\frac{x_1^2}{2}+x_1)p_3\\
        \dot{p}_3=0,
    \end{cases}
\end{align}
with $h_2(x,p)=p_2+x_1A(x_1,x_2)p_3$, hence for $t\le 0$, the subsegments $\gamma|_{[-1,t]}$ have a one-parameter family of normal lifts, given by
\begin{equation}
\label{eq:lift_n}
    \lambda(s) = ((0,s,0),(0,1,\bar{p}_3)), \quad \bar{p}_3 \in \R, 
\end{equation}
while for $t>0$, the segment $\gamma|_{[-1,t]}$ possesses a unique normal lift that corresponds to the case $\bar{p}_3=0$. 
Following Theorem~\ref{th:n_br}, $\gamma$ has a normal branching at time $\tau=0$, and no other normal branching can occur at later times, since the abnormal vector spaces $\mathcal{A}_t$ (which, with a slight abuse of notation, are associated with the curves $\gamma|_{[-1,t]}$)   become trivial as $t>0$. \\

Let us now consider the distribution \eqref{eq:norm_trans}, with $A(x_1,x_2)$ as in \eqref{eq:Ax1x2}, where $\theta$ is chosen as follows. First we consider a smooth function $\theta_1:\R \to [0,1]$ such that
\begin{itemize}
    \item $\theta_1(y)=0$ for $y \in (-\infty, -\frac{4}{5}]\cup [-\frac{1}{5},+\infty)$,
    \item $\theta_1(y)=1$ for $y \in \left[-\frac{3}{5},-\frac{2}{5}\right]$,
    \item $\theta_1(y) \in (0,1)$ with $\theta'(y)>0$ for $y \in (-\frac{4}{5},-\frac{3}{5})$ and $\theta_1(y) \in (0,1)$ with $\theta'(y)<0$ for $y \in (-\frac{2}{5},-\frac{1}{5})$. 
\end{itemize}
For all $n \ge 1$ let $\theta_n:\R \to [0,1]$ be defined by $\theta_n(y):=\theta_1(5^{n-1}y)$ and set $\bar{\theta}(y):\R\to [0,1]$, $\bar{\theta}(y):=\sum_{n \ge 1} \theta_n(y)$. Finally we let
\begin{align}
    \theta:\R\to[0,1], \quad \theta(y):= 
    \begin{cases}
        \bar{\theta}(y) e^{1/y} & \text{if } y \neq 0, \\
        0 & \text{if } y=0.
    \end{cases}
\end{align}
 \begin{figure}[H]
     \centering
     \includegraphics[width=200pt]{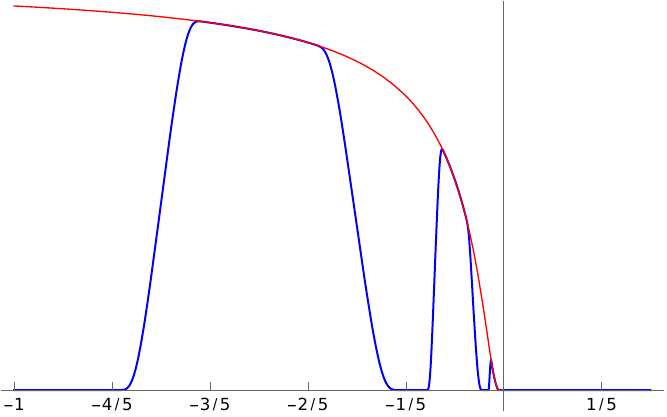} 
     \caption{Plot of the functions $\theta(y)$ (blue) on $\R$ and $e^{1/y}$ (red).}
     \label{fig:placeholder}
 \end{figure} 
Since $\theta$ is smooth, the distribution \eqref{eq:norm_trans} is smooth; moreover, it is totally nonholonomic. Define the horizontal curve
\begin{equation}
    \gamma:[-1,1] \to \R^3, \quad \gamma(t) = (0,t,0).
\end{equation}
The curve $\gamma$ is normal, with only one normal lift. However, given $-1 \le t_1 < t_2 \le 1$, considering the solutions of system \eqref{eq:syst} that project on $\gamma|_{[t_1,t_2]}$ we infer that the restriction $\gamma|_{[t_1,t_2]}$ has a one-parameter family of normal lifts as in \eqref{eq:lift_n} if and only if $\theta \equiv 0$ on $[t_1,t_2]$. In particular, for each $n \ge 1$ and $t>-1/5^{n-1}$ the restriction of $\gamma$ to the interval $[-1/5^{n-1},t]$ undergoes a loss of dimension of the normal affine space at $t= \tau_n :=-4/5^n $. Therefore the geodesic $\gamma$ has a local normal branching at $t=\tau_n$. Notice that the branching times are accumulating at $t=0$. 

This new phenomenon is not in contradiction with the fact that a normal geodesic has at most a finite number of normal branchings, \emph{in the sense of Definition \ref{def:n_br}}. Indeed, let $n \ge 2$ and consider a geodesic 
\begin{equation}
    \bar{\gamma}=\gamma|_{[-1,-1/5^{n-1}]} \ast \tilde{\gamma},
\end{equation} 
where $\tilde{\gamma}$ is a normal geodesic that branches from $\gamma|_{[-1/5^{n-1}, -3/5^n]}$  (in the sense of Definition \ref{def:n_br}) at time $\tau_n$. If $\bar{\gamma}$ were normal, since its restriction to the interval $(-4/5^{n-1},-1/5^{n-1})$ is contained in a region where the distribution is of contact type, the normal lift of $\bar{\gamma}$ would be unique. Moreover, since $\bar{\gamma}$ coincides with $\gamma$ up to time $\tau_n$, such a lift should coincide with the unique normal lift of $\gamma$, which would prevent branching. This proves that $\bar{\gamma}$ has no normal lift.\\

With analogous techniques, starting from examples of Sections \ref{sec:exp1} and \ref{sec:exp2}, it is possible to produce occurrences of regular abnormal geodesics that branch an arbitrarily large (even infinite) number of times, whose branching cannot all be detected via Theorem \ref{th:abn_br}.  Let us finally provide a necessary condition for a normal geodesic to have a local regular abnormal branching. 

\begin{proposition}
    Let $\gamma:[0,T]\to M$ be a unit speed normal geodesic that has a local regular abnormal branching at time $\tau \in (0,T)$. Then $\gamma$ has a local normal branching at time $\tau$.
\end{proposition}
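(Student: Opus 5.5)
The plan is to reduce the statement to Theorem~\ref{th:n_br}, applied to a suitable restriction of $\gamma$. The idea is to reproduce the ``only if'' half of the proof of Theorem~\ref{th:abn_br}, observing that this half never uses that $\gamma$ is regular abnormal. It only uses that $\gamma$ is a unit speed geodesic and that the branching curve has a regular abnormal lift.

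By Definition~\ref{def:loc_br} there are $[t_1,t_2]\ni\tau$ with $\tau\in(t_1,t_2)$ and a regular abnormal geodesic $\tilde\gamma:[t_1,t_2]\to M$ such that $\tilde\gamma|_{[t_1,\tau]}=\gamma|_{[t_1,\tau]}$ and $\tilde\gamma|_{[t_1,\tau+\varepsilon]}\neq\gamma|_{[t_1,\tau+\varepsilon]}$ for all small $\varepsilon>0$. Fix a regular abnormal lift $\tilde\lambda(\cdot)$ of $\tilde\gamma$, an integral line of $\vec G$ say, and put $\tilde\lambda_0:=\tilde\lambda(t_1)$. For $t\in(t_1,t_2]$ let $\mathcal{A}_t$ denote the abnormal vector space of $\gamma|_{[t_1,t]}$, with the same abuse of notation used above. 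Since $\tilde\lambda|_{[t_1,\tau]}$ is an abnormal lift of $\gamma|_{[t_1,\tau]}$, we have $\tilde\lambda_0\in\mathcal{A}_\tau$.

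The key step is to show that $\tilde\lambda_0\notin\mathcal{A}_{\tau+\varepsilon}$ for every $\varepsilon>0$. Suppose instead that $\mu(\cdot)$ is an abnormal lift of $\gamma|_{[t_1,\tau+\varepsilon]}$ with $\mu(t_1)=\tilde\lambda_0$. Abnormal lifts are determined by their initial covector through \eqref{eq:extremallift}, so $\mu=\tilde\lambda$ on $[t_1,\tau]$. In particular $\mu(\tau)=\tilde\lambda(\tau)\in\mathcal{R}$, and since $\mathcal{R}$ is open in $\Delta^\perp$, we get $\mu(t)\in\mathcal{R}$ on some interval $I=[t_1,\tau+\delta]$. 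Now Proposition~\ref{prop:Imax} applies to $\gamma$, which is a unit speed geodesic with abnormal lift $\mu$. It shows that $\mu|_I$ is an integral line of $\vec G$ or of $-\vec G$. The sign must be $+$, because $\mu$ agrees with $\tilde\lambda$ on $[t_1,\tau]$. By uniqueness of integral curves of $\vec G$, $\mu=\tilde\lambda$ on $I$, hence $\gamma=\pi\circ\mu=\tilde\gamma$ on $[t_1,\tau+\delta]$. This contradicts the branching.

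I expect this to be the main obstacle. The point is that we cannot assume $\gamma|_{[t_1,t_2]}$ is regular abnormal, and hence cannot quote Theorem~\ref{th:abn_br} directly. We must make sure that Proposition~\ref{prop:Imax} really only needs the geodesic property of $\gamma$ together with the membership $\mu(\tau)\in\mathcal{R}$, and this is exactly what its statement provides.

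To conclude, recall that $t\mapsto\mathcal{A}_t$ is non-increasing for inclusion and $\dim\mathcal{A}_t$ is piecewise constant. The strict inclusion $\mathcal{A}_{\tau+\varepsilon}\subsetneq\mathcal{A}_\tau$ for every $\varepsilon>0$ therefore forces the right limit of $\dim\mathcal{A}_t$ at $\tau$ to be strictly smaller than $\dim\mathcal{A}_\tau$, so $t\mapsto\dim\mathcal{A}_t$ is discontinuous at $\tau$. On the other hand, $\gamma|_{[t_1,t_2]}$ is a normal geodesic, being the restriction of a normal lift of $\gamma$. Theorem~\ref{th:n_br}, applied to $\gamma|_{[t_1,t_2]}$ and translated in time, then gives a normal geodesic branching from $\gamma|_{[t_1,t_2]}$ at $\tau$. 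By Definition~\ref{def:loc_br}, this is a local normal branching of $\gamma$ at $\tau$.
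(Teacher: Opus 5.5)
Your proof is correct, and it reaches the conclusion by a somewhat different route from the paper.

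\textbf{Your route.} You first show that $t\mapsto\dim\mathcal{A}_t$ (for $\gamma|_{[t_1,\cdot]}$) drops at $\tau$. You do this by re-running the first half of the proof of Theorem~\ref{th:abn_br}, correctly observing that it only uses that $\gamma$ is a unit-speed geodesic and that $\tilde\gamma$ has a regular abnormal lift. You then invoke Theorem~\ref{th:n_br} as a black box.

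\textbf{The paper's route.} The paper never passes through the dimension count. It takes a normal lift $\lambda$ of $\gamma$ and a regular abnormal lift $\tilde\mu$ of $\tilde\gamma$. It extends the normal lift $\lambda+\tilde\mu$ of $\gamma|_{[t_1,\tau]}$ as an integral line $\xi$ of $\vec{H}$. It then applies Proposition~\ref{prop:Imax} to the abnormal lift $\xi-\lambda$ to show that $\pi\circ\xi$ cannot follow $\gamma$ beyond $\tau$.

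\textbf{Comparison.} The paper's argument is essentially the \emph{if} direction of Theorem~\ref{th:n_br} unwound for the specific covector $\tilde\mu(t_1)\in\mathcal{A}_\tau\setminus\mathcal{A}_{\tau+\varepsilon}$.
\begin{itemize}
\item The paper's version is self-contained and explicitly exhibits a normal branching geodesic attached to $\tilde\gamma$, with initial covector $\lambda(t_1)+\tilde\mu(t_1)$.
\item Your version is more modular. It records the stronger intermediate fact that the abnormal space itself loses dimension at $\tau$, from which Theorem~\ref{th:n_br} produces the normal branching.
\end{itemize}
Both arguments rest on the same rigidity statement, Proposition~\ref{prop:Imax}.

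\textbf{One justification needs fixing.} $\mathcal{R}$ is not open in $\Delta^\perp$; it is a codimension-one submanifold of it. What you actually need is:
\begin{itemize}
\item $\mathcal{R}=\Delta_2^\perp\setminus\Delta_3^\perp$ is relatively open in $\Delta_2^\perp$, since $\Delta_3^\perp$ is closed;
\item $\mu$ takes values in $\Delta_2^\perp$ by the Goh condition \eqref{eq:Goh}, since $\gamma$ has unit speed.
\end{itemize}
Together these give $\mu(t)\in\mathcal{R}$ for $t$ near $\tau$, and the rest of your argument goes through unchanged.
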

\begin{proof}
    Without loss of generality, adopting the notation of Definition \ref{def:loc_br}, assume that $t_1=0$ and $t_2=T$.  Notice that in this setting (Definition \ref{def:loc_br}) we are not assuming that $\gamma|_{[0,t]}$ has a regular abnormal lift for $t>\tau$, therefore Theorem \ref{th:abn_br} cannot be applied. Let $\lambda(\cdot)$ be a normal lift of $\gamma$ and $\tilde{\mu}(\cdot)$ a regular abnormal lift of $\tilde{\gamma}$. The sum of $\lambda|_{[0,\tau]}$ and $\tilde{\mu}|_{[0,\tau]}$ is a normal lift of $\gamma|_{[0,\tau]}$, and it can be extended on the interval $[0,\tau+\varepsilon]$ for some $\varepsilon>0$ as an integral line $\xi(\cdot)$ of $\vec{H}$. By contradiction, if $\xi(\cdot)$ were a normal lift of $\gamma|_{[0,t+\varepsilon']}$ for some $0<\varepsilon'\le \varepsilon$, then $\xi(\cdot)-\lambda(\cdot)$ would be an abnormal lift of the same curve, with $(\xi-\lambda)|_{[0,\tau]}=\tilde{\mu}|_{[0,\tau]}$. Observing that $\tilde{\mu}(\tau) \in \mathcal{R}$ and thanks to Proposition \ref{prop:Imax}, up to possibly reducing $\varepsilon'$ we would deduce that $(\xi- \lambda)|_{[0,\tau+\varepsilon^{'}]}=\tilde{\mu}|_{[0,\tau+\varepsilon^{'}]}$, which would lead to a contradiction,  since their projections do not coincide on the interval $[0,\tau+\varepsilon]$. 
\end{proof}

\section{Optimality of short arcs}
\label{sect:opt_arcs}
Our examples of branching of regular abnormal geodesics present some similarities with the branching of normal ones. One of their features is that they give rise to families of branching geodesics, which depend smoothly on a finite-dimensional set of parameters. A natural question is the following. 
\begin{question}
    Let $\gamma:[0,T]\to M$ be a normal (resp.\ regular abnormal) geodesic and $(\gamma^\lambda)_{\lambda \in \Lambda}$ a family of normal (resp.\ regular abnormal) geodesics that branch from $\gamma$ at time $\tau \in(0,T)$. Does it exist  $\rho > 0$ such that the restrictions to the interval $[\tau-\rho,\tau+\rho]$ of each geodesic $(\gamma^\lambda)_{\lambda \in \Lambda}$, is a length-minimizer?
\end{question} 
If $\Lambda$ has finite cardinality, the answer is clearly positive. As proven in \cite{Mietton-Rizzi}, the answer is again positive for normal geodesics, if we consider compact families of branching geodesics. In this section, we prove that the same result holds true for regular abnormal geodesics. This result will be obtained as a corollary of Theorem~\ref{thm:opt_cpt}, which generalizes Theorem~\ref{pr:liu-sus}.  

\begin{notation}
     Recall the definition of $\mathcal{R}_T$ (Definition \ref{def:rt}). For all $\lambda\in\mathcal{R}_T$ we let $\gamma^\lambda:[0,T]\to M$ be the regular abnormal geodesic $\gamma^\lambda(t)=\pi\circ e^{t\vec{G}}(\lambda)$.
\end{notation}

\begin{theorem}
\label{thm:opt_cpt}
    Let $T>0$. For all $\tau\in [0,T]$ and $\lambda_0\in\mathcal{R}_T$ there exist $\rho>0$ and an open neighborhood $\Lambda$ of $\lambda_0$ in $\mathcal{R}_T$ such that for all $\lambda\in \Lambda$, the restriction 
    \begin{equation}
        \gamma^\lambda|_{[\tau-\rho,\tau+\rho]\cap [0,T]}
    \end{equation} 
    is the unique length-minimizing curve, parametrized with unit speed, between its endpoints.
\end{theorem}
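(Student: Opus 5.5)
We would follow the classical Liu--Sussmann strategy for Theorem~\ref{pr:liu-sus}, carried out with constants uniform in the initial covector. Fix $\tau$ and $\lambda_0\in\mathcal{R}_T$, and set $q_0=\gamma^{\lambda_0}(\tau)$. Since $e^{\tau\vec G}(\lambda_0)\in\mathcal{R}$, we have $\h\neq 0$ there, i.e.\ $\Delta_2(q_0)\neq\Delta_3(q_0)$. We work in a coordinate neighborhood $W$ of $q_0$. In $W$ we construct, via the local structure of $\mathcal{R}$, a smooth one-form $\eta$ whose kernel contains $\Delta$ and for which the $2$-form $d\eta|_\Delta$ vanishes exactly along the abnormal curves through the relevant region. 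This is the Liu--Sussmann ``exact one-form'' construction: for covectors $\mu\in\mathcal{R}$ near $\mu_0:=e^{\tau\vec G}(\lambda_0)$, extend $\mu$ to a field of annihilators of $\Delta$ which is transported by the flow of $\vec G$ along the leaves of the foliation of $\mathcal{R}$. By continuous dependence of this construction on the covector, we obtain, for each $\mu$ in a neighborhood $U$ of $\mu_0$ in $\mathcal{R}$, a one-form $\eta_\mu$ on a common neighborhood $W$ of $q_0$, depending continuously on $\mu$ in $C^1$.

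The key estimate, following \cite{liu-sus} and \cite[Ch.~12]{ABB}, is the following. For a horizontal curve $c$ in $W$ joining two points of the abnormal curve $\gamma_\mu$ through $\pi(\mu)$ (the projection of the $\vec G$-integral line through $\mu$), a Stokes-type argument gives
\[
0=\int_{c}\eta_\mu-\int_{\gamma_\mu}\eta_\mu=\int_S d\eta_\mu .
\]
Combined with the nondegeneracy $h_{112}^2+h_{212}^2\ge c_0>0$ on $U$, this shows that any competitor $c$ of length at most $L(\gamma_\mu)$ between those endpoints must coincide with $\gamma_\mu$, provided the length is smaller than some $\rho_0$. The constants entering this step are bounds on $\eta_\mu$ and its first two derivatives on $W$, together with $c_0$. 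These depend continuously on $\mu$, so by shrinking $U$ to have compact closure we obtain a single $\rho_0$ valid for all $\mu\in U$. This uniformity is the main obstacle. The classical proof is stated for a single curve, and one has to reread it carefully to check that the smallness parameter depends only on finitely many $C^k$ norms of the data and on a lower bound for $|\h|$. I would first isolate this as a lemma giving a quantitative local optimality of the form ``every regular abnormal arc through $\mu\in U$ of length $<\rho_0$ is the unique minimizer,'' with the constants made explicit.

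The final step is a transfer back to initial covectors. By continuity of $(t,\lambda)\mapsto e^{t\vec G}(\lambda)$ on the open set $\mathcal{D}$, there are $\rho\le\rho_0/2$ and a neighborhood $\Lambda$ of $\lambda_0$ in $\mathcal{R}_T$ such that $e^{t\vec G}(\lambda)\in U$ and $\gamma^\lambda(t)\in W$ for all $\lambda\in\Lambda$ and $t\in[\tau-\rho,\tau+\rho]\cap[0,T]$. Then $\gamma^\lambda|_{[\tau-\rho,\tau+\rho]\cap[0,T]}$ is a unit-speed regular abnormal arc of length at most $2\rho<\rho_0$ passing through $e^{\tau\vec G}(\lambda)\in U$. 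The lemma therefore yields that it is the unique unit-speed length minimizer between its endpoints. Competitors leaving $W$ are excluded by choosing $\rho$ smaller than half the sub-Riemannian distance from $\gamma^{\lambda}(\tau)$ to $\partial W$, which is uniformly positive for $\lambda\in\Lambda$. The endpoint cases $\tau=0$ or $\tau=T$ are handled identically, using one-sided intervals.
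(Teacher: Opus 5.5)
Your architecture is the paper's: a parameter-uniform Liu--Sussmann argument, a continuity transfer from $e^{\tau\vec G}(\lambda)$ back to $\lambda$, and exclusion of competitors leaving the chart. Your distance argument for the last step is fine. The gap is that the two steps carrying the content of the theorem are asserted rather than proved, and the first is described in a way that does not work.

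You cannot build a one-form on an open set $W$ by transporting annihilators along the leaves of $\mathcal{R}$ with $\vec G$. The field $\vec G$ lives only on $\mathcal{R}$, and $\pi(\mathcal{R})$ need not contain any open set; for the Martinet structure $X_1=\partial_1$, $X_2=\partial_2+\frac{x_1^2}{2}\partial_3$ it is the surface $\{x_1=0\}$. What is needed, and what Proposition~\ref{prop:nice} constructs, is much weaker. One takes a unit horizontal field $Y^\lambda=a_1^\lambda X_1+a_2^\lambda X_2$ extending $\dot\gamma^\lambda$. One flows an arbitrary section over a transversal hypersurface with $W^\lambda=a_1^\lambda\vec h_1+a_2^\lambda\vec h_2$, which extends $\vec G$ off $\mathcal{R}$. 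The resulting form $\omega^\lambda$ annihilates $\Delta$ and satisfies $\mathscr{L}_{Y^\lambda}\omega^\lambda=0$ \emph{only along} $\gamma^\lambda$. Lemma~\ref{lem:Lemma3} and Proposition~\ref{prop:thm4} then turn this into charts $\kappa^\lambda$ with $X=\partial_1$, $Y^\lambda=\partial_2+x_1\sum_i\psi_i^\lambda\partial_i$, $\psi_3^\lambda=x_1\eta_1^\lambda+x_3\eta_3^\lambda+\dots+x_n\eta_n^\lambda$ and $\eta_1^\lambda\neq0$. The common domain and smooth $\lambda$-dependence do not follow from an appeal to ``continuous dependence of the construction''. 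The paper gets them by applying the inverse function theorem to maps of the form $(\lambda,x)\mapsto(\lambda,\cdot)$ on $\Lambda\times C(\varepsilon_0)$, which puts all objects on a fixed $U$ containing a fixed cube $\bar C(\varepsilon)$. This is exactly the step you call ``the main obstacle'' and then defer.

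Your key estimate is also a black box. The identity $\int_S d\eta_\mu=0$ together with $|\h|\ge c_0$ does not by itself force a competitor to coincide with $\gamma_\mu$. Until the argument is written out, you cannot tell which quantities the threshold depends on, and that dependence is the whole content of the statement. (You also assert only $C^1$-dependence of $\eta_\mu$ while needing bounds on two derivatives.) In the normal form the mechanism is quantitative. Let $\alpha=\sigma-\int v_y$ be the length deficit and $\beta=\max|y_1|$. The constraint $y_3(s_2)=y_3(s_1)$ gives $\int y_1^2\le 2\beta^2\alpha$; this uses that $\psi_3^\lambda$ vanishes on the $x_2$-axis (where invariance along $\gamma^\lambda$ enters) and that $|y_i|\le C_2\sigma^{1/2}(\int y_1^2)^{1/2}$ for $i\ge3$. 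The elementary bound $\int y_1^2\ge 2\beta^3/(3C_1)$ then gives $\beta\le 3C_1\alpha$. The $x_2$-equation gives $2\rho\le\sigma+(3C_1C_2\sigma-1)\alpha$, which forces $\alpha=0$. Only then is it visible that $\rho$ depends solely on $C_1,C_2,C_3,m,L,\delta$. Each of these becomes uniform by taking maxima or minima over $\bar\Lambda$, precisely because the normal form depends smoothly on $\lambda$.
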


\begin{corollary}
\label{cor:cpt_br}
    Let $\gamma:[0,T]\to M$ be a regular abnormal geodesic that has regular abnormal branching at time $\tau \in (0,T)$. Let $K \subset \mathrm{REG}^+(\tau)$ be compact. There exists $\rho>0$ such that for all $\lambda \in K$ the curve $\gamma^\lambda|_{[\tau-\rho,\tau+\rho]\cap[0,T]}$ is well-defined and it is the unique length-minimizer, parametrized with unit speed, between its endpoints. 
\end{corollary}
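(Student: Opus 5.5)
The plan is to deduce the corollary from Theorem~\ref{thm:opt_cpt} by a compactness argument on $K$, applied with the time horizon $T$ replaced by a slightly larger one that still keeps every orbit inside $\mathcal{R}$.

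First, the orbits through $K$ exist past $\tau$. Each $\lambda\in K\subset\mathrm{REG}^+(\tau)$ is the initial value of a positively oriented regular abnormal lift of $\gamma|_{[0,\tau]}$. Hence $\lambda\in\mathcal{R}_\tau$, and $\gamma^\lambda|_{[0,\tau]}=\gamma|_{[0,\tau]}$. Since $\mathcal{D}$ is relatively open in $[0,+\infty)\times\mathcal{R}$ and $\{\tau\}\times K\subset\mathcal{D}$ is compact, there exists $\delta>0$ with $[0,\tau+\delta]\times K\subset\mathcal{D}$. Equivalently, $K\subset\mathcal{R}_{T'}$ with $T':=\tau+\delta$, and we may take $\delta$ small enough that $T'\le T$. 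Then $\gamma^\lambda$ is well defined on $[0,T']$ for every $\lambda\in K$.

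Second, we cover $K$ with finitely many neighborhoods. For each $\lambda_0\in K$, apply Theorem~\ref{thm:opt_cpt} with horizon $T'$ and the given time $\tau\in[0,T']$. This yields $\rho_{\lambda_0}>0$ and an open neighborhood $\Lambda_{\lambda_0}$ of $\lambda_0$ in $\mathcal{R}_{T'}$ such that, for every $\lambda\in\Lambda_{\lambda_0}$, the restriction $\gamma^\lambda|_{[\tau-\rho_{\lambda_0},\tau+\rho_{\lambda_0}]\cap[0,T']}$ is the unique unit-speed length minimizer between its endpoints. By compactness, finitely many neighborhoods $\Lambda_{\lambda_1},\dots,\Lambda_{\lambda_N}$ cover $K$. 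Set $\rho:=\min\{\rho_{\lambda_1},\dots,\rho_{\lambda_N},\delta\}$.

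Third, we check that this $\rho$ works. Uniqueness of minimizers passes to sub-intervals: if a subarc of a unique minimizer had a competitor of no greater length, concatenating that competitor with the remaining pieces would produce a second minimizer of the longer arc. Moreover, $\rho\le\delta$ gives $[\tau-\rho,\tau+\rho]\cap[0,T]\subset[0,T']$. Hence for every $\lambda\in K$ the curve $\gamma^\lambda|_{[\tau-\rho,\tau+\rho]\cap[0,T]}$ is well defined and is the unique unit-speed length minimizer between its endpoints.

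The only delicate step is the first one: Theorem~\ref{thm:opt_cpt} needs a uniform horizon, i.e.\ $K\subset\mathcal{R}_{T'}$ for some $T'>\tau$. This follows from openness of $\mathcal{D}$ combined with compactness of $K$. One also has to note that $K\subset\mathcal{R}$, which holds because $\mathrm{REG}^+(\tau)$ consists of covectors whose orbits lie in $\mathcal{R}$ on $[0,\tau]$. The remaining steps are routine.
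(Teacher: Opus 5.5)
Your proposal is correct and follows the paper's proof essentially step for step. You obtain uniform existence of the orbits on $[0,\tau+\delta]$ from compactness of $K$, apply Theorem~\ref{thm:opt_cpt} locally at each point of $K$, extract a finite subcover, and take $\rho$ as the minimum of the finitely many radii and $\delta$. Two of your points make precise what the paper leaves implicit: you explicitly use the horizon $T'=\tau+\delta$, which guarantees $K\subset\mathcal{R}_{T'}$ even though the paper takes neighborhoods in $\mathcal{R}_T$, and you observe that unique minimality passes to subarcs, which the paper uses when it assumes $\rho(\lambda)\le\varepsilon$.
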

\begin{proof}[Proof of Corollary \ref{cor:cpt_br}]
      By compactness of $K$ there exists $\varepsilon>0$ such that $\gamma^\lambda$ is well-defined on $[0,\tau+\varepsilon]$ for all $\lambda \in K$. By Theorem \ref{thm:opt_cpt}, for all $\lambda \in K$ we can find an open neighborhood $V(\lambda)$ of $\lambda$ in $\mathcal{R}_T$ and $\rho(\lambda)>0$ (which can be supposed to be $\le \varepsilon$) such that for all $\mu \in V(\lambda)$ the geodesic $\gamma^\mu|_{[\tau-\rho(\lambda),\tau+\rho(\lambda)]\cap [0,T]}$ is uniquely length-minimizing. Since $K$ is compact, we can find $\lambda_1,\dots,\lambda_N \in K$ such that $K \subset \bigcup_{i=1}^NV(\lambda_i)$. It is now sufficient to let $\rho:= \min \{\rho(\lambda_1),\dots,\rho(\lambda_N)\}$.
\end{proof}

The strategy to prove Theorem \ref{thm:opt_cpt} is the following. Fix $T,\tau$, and $\lambda_0$ as above, letting $q_0:= \gamma^{\lambda_0}(\tau)$. First we show that we can choose an open neighborhood $U$ of $q_0$, an open neighborhood $\Lambda$ of $\lambda_0$ in $\mathcal{R}_T$ with compact closure, and $\varepsilon>0$ so that the curves $\gamma^\lambda|_{[\tau-\varepsilon, \tau+\varepsilon] \cap [0,T]}$, for $\lambda \in \Lambda$, are contained in $U$ and can be simultaneously put in a normal form via a family of coordinate changes on $U$ that depend smoothly on $\lambda$. Then we will prove that, up to further restricting $\Lambda$, $U$, and $\varepsilon$, there exists $0<\rho\le \varepsilon$ such that restrictions to the interval $[\tau-\rho, \tau+\rho]\cap [0,T]$ of the curves in normal form are uniquely length-minimizing in the above sense.

\begin{subsection}{A normal form for regular abnormals}
We begin with the following result.

\begin{proposition}
\label{prop:nice}
    There exist $\varepsilon>0$, an open neighborhood $U$ of $q_0 =\gamma^{\lambda_0}(\tau)$, an open neighborhood $\Lambda$ of $\lambda_0$ in $\mathcal{R}_T$, and families of horizontal vector fields $Y^\lambda$ and $1$-forms $\omega^\lambda$ defined on $U$, depending smoothly on $\lambda \in \Lambda$, such that for all $\lambda \in \Lambda$ the following holds:
    \begin{itemize}
        \item [(1)] $\gamma^\lambda([ \tau-\varepsilon, \tau+\varepsilon]) \subset U$;
        \item[(2)] $g(Y^\lambda,Y^\lambda) \equiv 1$ on $U$;
        \item[(3)] $\gamma^\lambda |_{[\tau-\varepsilon,\tau+\varepsilon]}$ is an integral line of $Y^\lambda$;
        \item[(4)] $\omega^\lambda$ annihilates $\Delta$ along $\gamma^\lambda|_{[\tau-\varepsilon, \tau+\varepsilon]}$, $\omega^\lambda(q) \notin \Delta_3^\perp(q)$ for all $q \in U$ and the Lie derivative $\mathscr{L}_{Y^\lambda} \omega^\lambda$ vanishes identically along $\gamma^\lambda|_{[\tau-\varepsilon, \tau+\varepsilon]}$, 
    \end{itemize}
    with the understanding that $\varepsilon<\min\{\tau,T-\tau\}$ if $\tau \in (0,T)$, while the interval $[\tau-\varepsilon,\tau+\varepsilon]$ is replaced with $[0,\varepsilon]$ (resp.\ $[T-\varepsilon,T]$) if $\tau=0$ (resp.\ $\tau=T$). 
\end{proposition}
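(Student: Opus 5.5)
We build $Y^\lambda$ and $\omega^\lambda$ from the regular abnormal extremal itself, using a transversal slice to extend data from the curve to a neighborhood, with smooth dependence on $\lambda$ inherited from the smooth flow of $\vec G$ on $\mathcal{R}$.

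\emph{Choice of $U$, $\varepsilon$, $\Lambda$.} Since $\lambda_0\in\mathcal{R}_T$, the map $(t,\lambda)\mapsto e^{t\vec G}(\lambda)$ is smooth near $[0,T]\times\{\lambda_0\}$. The projection $\gamma^{\lambda}(t)$ and the covector $e^{t\vec G}(\lambda)$ therefore depend smoothly on $(t,\lambda)$. Fix a coordinate chart $U$ around $q_0$ on which the frame $X_1,X_2$ is defined and $\Delta_2\neq\Delta_3$ near $q_0$. By continuity, choose $\varepsilon$ and a neighborhood $\Lambda$ of $\lambda_0$ with compact closure so that $\gamma^\lambda([\tau-\varepsilon,\tau+\varepsilon])\subset U$ for all $\lambda\in\Lambda$. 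This gives (1).

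\emph{Construction of $Y^\lambda$.} Along the curve the unit control is $u^\lambda(t)=\h(e^{t\vec G}\lambda)/|\h(e^{t\vec G}\lambda)|$, which is smooth in $(t,\lambda)$. Fix a hypersurface $S$ through $q_0$ transversal to $\dot\gamma^{\lambda_0}(\tau)=u^{\lambda_0}_1X_1+u^{\lambda_0}_2X_2$. Consider the map $F^\lambda(t,s)=e^{(t-\tau)(u_1X_1+u_2X_2)}(\sigma^\lambda(s))$, where $\sigma^\lambda$ parametrizes $S$ shifted so that $\sigma^\lambda(0)=\gamma^\lambda(\tau)$. A better choice is to flow the \emph{time-dependent} field $u^\lambda_1(t)X_1+u^\lambda_2(t)X_2$ from points of $S$. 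The resulting map is a diffeomorphism near $(\tau,0)$, uniformly in $\lambda$ by the inverse function theorem with parameters, after shrinking $\varepsilon,\Lambda,U$. We then define $Y^\lambda=\bar u^\lambda_1X_1+\bar u^\lambda_2X_2$, where $\bar u^\lambda(F^\lambda(t,s))=u^\lambda(t)$. This field is horizontal and unit since $X_1,X_2$ are orthonormal, which gives (2). Its integral line through $\gamma^\lambda(\tau)$ is $\gamma^\lambda$, which gives (3).

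\emph{Construction of $\omega^\lambda$.} The key step is the choice of $\omega^\lambda$. On the slice $S$, choose a smooth $1$-form $\eta^\lambda$ defined on $U$ with $\eta^\lambda(\gamma^\lambda(\tau))=e^{\tau\vec G}(\lambda)$, for instance a constant-coefficient form in coordinates. We then transport it: $\omega^\lambda(F^\lambda(t,s)):=\big((P^{\lambda}_{\tau,t})^{-1}\big)^*\eta^\lambda(\sigma^\lambda(s))$, where $P^\lambda$ is the flow of the time-dependent field above. Along $\gamma^\lambda$ this reproduces $e^{t\vec G}(\lambda)$, because regular abnormal extremals are exactly pullbacks by the control flow (see \eqref{eq:extremallift} and \eqref{eq:pmp}, with $u$ the control of $\gamma^\lambda$). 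Hence $\omega^\lambda$ annihilates $\Delta$ along the curve. By construction $\omega^\lambda$ is invariant under the flow of $Y^\lambda$ on the curve, i.e.\ $\mathscr{L}_{Y^\lambda}\omega^\lambda=0$ there.

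This is where the main obstacle lies. The field $Y^\lambda$ is autonomous and agrees with the time-dependent field only along the level sets $\{t\}\times S$. Invariance must therefore be checked using that $F^\lambda$ straightens $Y^\lambda$ to $\partial_t$ and that the form is transported by that same flow. I would instead define $\omega^\lambda$ directly via $(F^\lambda)^*$: set $(F^{\lambda})^{*}\omega^\lambda$ equal to the $t$-independent form whose value at $(\tau,0)$ matches $(F^\lambda)^*e^{\tau\vec G}(\lambda)$. Then $\mathscr{L}_{\partial_t}$ of it vanishes identically. One then checks along $s=0$ that it agrees with $e^{t\vec G}(\lambda)$, since both solve the same linear transport ODE along $\gamma^\lambda$.

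Finally, $\omega^\lambda(q)\notin\Delta_3^\perp(q)$ holds at $q=\gamma^\lambda(\tau)$ because $h_{112},h_{212}$ do not both vanish on $\mathcal{R}$. It then extends to all of $U$ after shrinking $U$ and $\Lambda$, by continuity and compactness of $\bar\Lambda$. All constructions depend smoothly on $\lambda$.
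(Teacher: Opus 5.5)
Your construction is sound and is a close variant of the paper's, but the step that carries the real content of (4) is asserted without the argument that makes it true.

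You identify $\omega^\lambda(\gamma^\lambda(t))$ with $\Gamma^\lambda(t):=e^{t\vec G}(\lambda)$ ``since both solve the same linear transport ODE''. A priori they solve different ODEs. Because $\mathscr{L}_{Y^\lambda}\omega^\lambda\equiv 0$, the first is an integral curve of the cotangent lift $\vec{h}_{Y^\lambda}$ of the autonomous field. By contrast, $\Gamma^\lambda$ solves $\dot\Gamma^\lambda=\sum_i u^\lambda_i(t)\vec{h}_i(\Gamma^\lambda)$, the lift of the time-dependent field. This is exactly the discrepancy you had just flagged as the main obstacle. What reconciles them is that $\Gamma^\lambda$ annihilates $\Delta$, and your sentence never invokes this.

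Write $Y^\lambda=\bar u^\lambda_1X_1+\bar u^\lambda_2X_2$, with $\bar u^\lambda_i$ viewed as functions on $T^*M$. Then $\vec{h}_{Y^\lambda}=\bar u^\lambda_1\vec{h}_1+\bar u^\lambda_2\vec{h}_2+h_1\vec{\bar u}^\lambda_1+h_2\vec{\bar u}^\lambda_2$. The last two terms vanish on $\Delta^\perp$, and $\bar u^\lambda(\gamma^\lambda(t))=u^\lambda(t)$. Hence $\Gamma^\lambda$ is also an integral curve of $\vec{h}_{Y^\lambda}$, and uniqueness gives $\omega^\lambda\circ\gamma^\lambda=\Gamma^\lambda$.

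You can also argue directly in your setup. Since $F^\lambda_*\partial_t=Y^\lambda$, the maps $P^\lambda_{\tau,t}$ and $e^{(t-\tau)Y^\lambda}$ agree on the slice. So their differentials at $\gamma^\lambda(\tau)$ can differ only on $\dot\gamma^\lambda(\tau)$, where both pullbacks of $\Gamma^\lambda(t)$ vanish because $\Gamma^\lambda$ annihilates $\Delta$. This is precisely the paper's observation that $W^\lambda=\vec{h}_{Y^\lambda}$ along $\Gamma^\lambda$ because $h_1=h_2=0$ there.

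The same remark rescues your first definition via $P^\lambda$. At points of an integral curve of $Y^\lambda$, $\mathscr{L}_{Y^\lambda}\omega^\lambda$ depends only on the values of $\omega^\lambda$ along that curve. So what was unjustified there was the phrase ``by construction'', not the way $\omega^\lambda$ is extended off the curve.

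Once this is supplied, the two routes differ mainly in bookkeeping. The paper extends the control off the curve by an arbitrary smooth $\hat\alpha$ in a rectifying chart $\theta^\lambda$. It then flows constant-coefficient covectors on the slice with $W^\lambda=\sum_i a^\lambda_i\vec{h}_i$. This makes $\omega^\lambda\circ\gamma^\lambda=\Gamma^\lambda$ automatic and puts the $\Delta^\perp$ argument into the Lie-derivative identity.

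You instead make the control constant along the flow-out of the slice, so $F^\lambda$ itself straightens $Y^\lambda$ to $\partial_t$ and no separate chart or extension is needed. Your $t$-independent choice gives $\mathscr{L}_{Y^\lambda}\omega^\lambda\equiv 0$ on all of $U$, more than (4) requires. The price is that the $\Delta^\perp$ argument moves to the identification along the curve.

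The remaining points go through as in the paper: $\omega^\lambda\notin\Delta_3^\perp$ on $U$ by openness, and the uniform choice of $U,\Lambda,\varepsilon$ by the parametric inverse function theorem. The endpoint cases $\tau\in\{0,T\}$, which you do not address, need the flow of $\vec G$ extended slightly beyond $[0,T]$, as the paper does.
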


\begin{proof}
    To simplify the notation, we will prove the result for $\tau \in (0,T)$. This is not restrictive since we can always find an open neighborhood $\Lambda$ of $\lambda_0$ and a positive time $\delta>0$ such that the curves $(\gamma^\lambda)_{\lambda \in \Lambda}$ are well-defined on the interval $[-\delta, T+\delta]$.
    
    Fix a coordinate chart $(U_0,\psi)$ centered at $q_0$ and let $0<\varepsilon_0<\min\{\tau,T-\tau\}$ and an open neighborhood $\Lambda$ of $\lambda_0$ be such that 
    \begin{equation}
        \gamma^\lambda([\tau-\varepsilon_0,\tau+\varepsilon_0]) \subset U_0 \qquad \forall \lambda\in\Lambda. 
    \end{equation}
    Let us denote by $C(\varepsilon_0)$ the cube $(-\varepsilon_0,\varepsilon_0)^n$. Identifying $U_0$ with $\psi(U_0)$ we can assume that $\dot{\gamma}_2^{\lambda_0}(\tau)>0$ and, up to restricting $\Lambda$, also $\dot{\gamma}^\lambda_2(\tau)>0$ for all $\lambda \in \Lambda$. Consider the map $\Phi: \Lambda \times C(\varepsilon_0) \to \Lambda \times U_0$ that, denoting $x=(x_1,\dots,x_n)$, is defined as
    \begin{equation}
        \Phi(\lambda,x) = \left(\lambda, \gamma^\lambda(\tau+x_2) +(x_1,0,x_3,\dots,x_n)\right).
    \end{equation}
    Up to restricting $\varepsilon_0$ and $\Lambda$, such map is well-defined and its differential at $(\lambda_0,0)$ is invertible. By the inverse function theorem, we can find an open neighborhood $\mathcal{V}$ of $(\lambda_0,0)$ in $\Lambda \times C(\varepsilon_0)$ and an open neighborhood $\mathcal{W}$ of $(\lambda_0,q_0)$ in $\Lambda \times U_0$ such that $\Phi:\mathcal{V} \to \mathcal{W}$ is a diffeomorphism. By restricting $\Lambda$ if necessary, we can choose an open neighborhood $U_1 \subset U_0$ of $q_0$ such that $\Lambda \times U_1 \subset \mathcal{W}$, $\varepsilon_1>0$ such that 
    \begin{equation}
        \gamma^\lambda([\tau-\varepsilon_1,\tau+\varepsilon_1]) \subset U_1 \quad \forall \lambda \in \Lambda, 
    \end{equation} and replace $\mathcal{W}$ with $\Lambda \times U_1$. In particular, there exists a family of smooth functions $\theta^\lambda$ on $U_1$, depending smoothly on $\lambda \in \Lambda$, such that 
    \begin{equation}
        \Phi^{-1}(\lambda,q) = (\lambda, \theta^\lambda(q)), \quad \forall (\lambda,q) \in \Lambda \times U_1.
    \end{equation}
    For all $\lambda \in \Lambda$, the function $\theta^\lambda : U_1\to \R^n$ is a diffeomorphism from $U_1$ onto its image, and it defines a local chart on $U_1$ that rectifies the curve $\gamma^\lambda$, i.e:
    \begin{equation}
    \label{eq:rectif}
        \theta^\lambda(\gamma^\lambda(\tau+t)) = (0,t,0,\dots,0) 
    \end{equation}
    for all $\lambda \in \Lambda$ and $t \in [-\varepsilon_0, \varepsilon_0]$ such that $\gamma^\lambda(\tau+t) \in U_1$. Up to further restricting $\varepsilon_1$ and $\Lambda$, we can assume that $[-\varepsilon_1,\varepsilon_1]^n=:\bar{C}(\varepsilon_1) \subset \theta^\lambda(U_1) \subset C(\varepsilon_0)$ for all $\lambda \in \Lambda$. See Figure~\ref{fig:charts}.

    \begin{figure}[H]
     \centering
          \includegraphics[width=\textwidth]{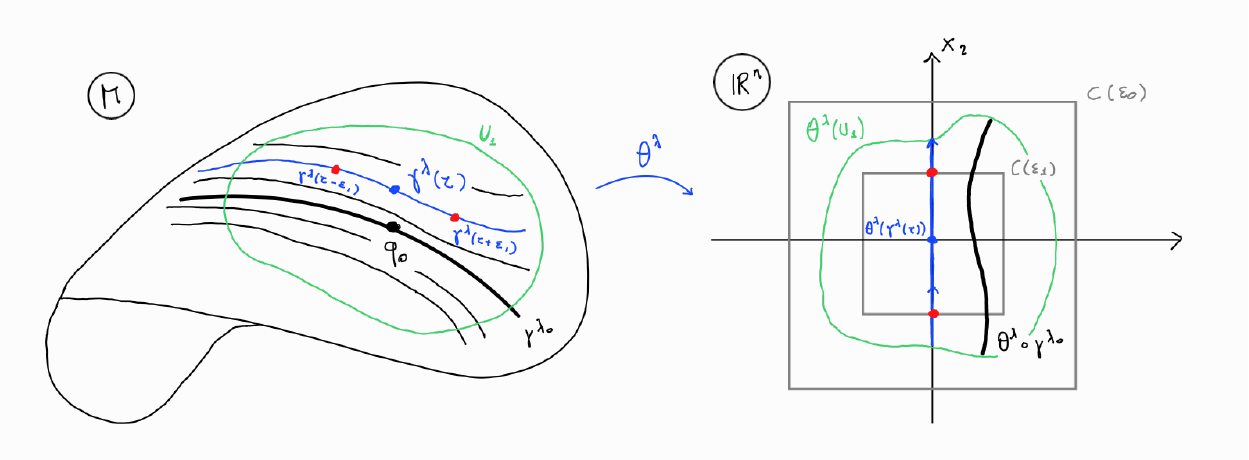}
     \caption{ Illustration of the maps $\theta^\lambda$. }\label{fig:charts}
\end{figure}

    Recall that $\varepsilon_0 < \min\{\tau,T-\tau\}$, and define the open set
    \[
    \Sigma:= \{(\lambda,t)\in \Lambda\times (-\varepsilon_0,\varepsilon_0) \mid \gamma^\lambda(\tau+t)\in U_1\}.
    \]
   Note that $\Lambda \times [-\varepsilon_1,\varepsilon_1] \subset \Sigma \subset \Lambda \times (-\varepsilon_0, \varepsilon_0)$. Since the curves  $(\gamma^\lambda)_{\lambda \in \Lambda}$ are horizontal and have unit speed, there exists a smooth function $\alpha:\Sigma \to \R^2$ such that
    \begin{equation}
        \dot\gamma^\lambda(\tau+t) = \sum_{i=1}^2 \alpha_i(\lambda,t)X_i(\gamma^\lambda(\tau+t))
    \end{equation}
    and $\alpha_1^2(\lambda,t)+\alpha_2^2(\lambda,t)=1$ for all $(\lambda,t) \in \Sigma$. Let
     $i:\Lambda \times \R \hookrightarrow \Lambda \times \R^n$  be the embedding $i(\lambda, x_2) = (\lambda, 0,x_2,0,\dots,0)$. By construction of $\Sigma$,
      we have that $(\Phi\circ i)(\Sigma)$ is a (properly) embedded submanifold of $\Lambda \times U_1$. In particular, there exists a smooth  $\hat{\alpha} :  \Lambda \times U_1  \to \R^2$ that extends $\alpha$ in the sense that  $\hat\alpha \circ \Phi \circ  i = \alpha$ on $\Sigma$. Since $\hat{\alpha}(\lambda_0,q_0) = \alpha(\lambda_0,0)\neq 0$, up to restricting $\Lambda$, $U_1$, and $\varepsilon_1$ we can assume that $\hat{\alpha}$ does not vanish on $\Lambda \times U_1$. Let then
    \begin{equation}
        a_i^\lambda(q) := \frac{\hat{\alpha}_i(\lambda,q)}{\sqrt{\hat{\alpha}_1^2(\lambda,q)+\hat{\alpha}_2^2(\lambda,q)}} \quad \forall (\lambda,q) \in \Lambda \times U_1, \ i=1,2,
    \end{equation}
    and 
    \begin{equation} 
        Y^\lambda(q):=\sum_{i=1}^2 a_i^\lambda(q) X_i(q),\qquad \forall (\lambda,q)\in \Lambda \times U_1.
    \end{equation}
    
    By construction, $(Y^\lambda)_{\lambda\in \Lambda}$ is a family of horizontal vector fields, smoothly depending on $\lambda$, with unit norm, such that $Y^\lambda(\gamma^\lambda(\tau+t)) = \dot\gamma^\lambda(\tau+t)$ for all $\lambda \in \Lambda$ and for all $t$ such that $\gamma^\lambda(\tau+t)\in U_1$, and so in particular for all $t\in [-\varepsilon_1,\varepsilon_1]$.

    Let us now turn to the construction of the $1$-forms $\omega^\lambda$. First of all we define  a family $(W^\lambda)_{\lambda \in \Lambda}$ of smooth vector fields on $\pi^{-1}(U_1)$ as 
    \begin{equation}
        W^\lambda(\mu) =  \sum_{i=1}^2a_i^\lambda(\pi(\mu)) \vec{h}_i(\mu) \quad \forall \mu \in \pi^{-1}(U_1), \lambda \in \Lambda.
    \end{equation}
    Notice that, by construction, $ e^{t W^\lambda} \circ e^{\tau \vec{G}}(\lambda)= e^{(\tau+t)\vec{G}}(\lambda)$ for all $(\lambda,t) \in \Sigma$. Let $S$ be the submanifold of $\R^n$ defined as $S=\{x_2=0\} \cap C(\varepsilon_1)$. Let us write in coordinates $\theta^\lambda$, 
    \begin{equation}
        ((\theta^\lambda)^{-1})^*e^{\tau\vec{G}}(\lambda)= \sum_{i=1}^n p_i^\lambda dx_i(0),
    \end{equation}
    and define the smooth sections of $T^*\R^n$ on $S$ letting for all $x=(x_1,0,x_3,\dots,x_n) \in S$
    \begin{equation}
        \beta^\lambda(x) = \sum_{i=1}^np_i^\lambda dx_i(x).
    \end{equation}
    By construction $(\theta^\lambda)^*(\beta^\lambda(0))=e^{\tau\vec{G}}(\lambda)$ for all $\lambda \in \Lambda$. We rewrite the map $\lambda\mapsto \beta^\lambda$ as a smooth form on $\Lambda \times S$ by setting
    \begin{equation}
        \beta(\lambda,(x_1,0,x_3,\dots,x_n)) := \beta^\lambda(x_1,0,x_3,\dots,x_n).
    \end{equation}
    Since $\Phi$ is a diffeomorphism, the image $\Phi(\Lambda \times S)$ is an embedded submanifold of $\Lambda \times M$ of codimension $1$. We equip $\Phi(\Lambda \times S)$ with the smooth form $\eta := (\Phi^{-1})^*\beta$. By construction, $\eta(\lambda,\gamma^\lambda(\tau))=e^{\tau\vec{G}}(\lambda)$ for all $\lambda \in \Lambda$. Let now
    \begin{equation}
        A:(-\varepsilon_1,\varepsilon_1) \times \Phi(\Lambda \times S) \to \Lambda \times M
    \end{equation}
    be defined by 
    \begin{equation}
        A(s,(\lambda,q))=\left( \lambda, \pi \circ e^{s W^\lambda}(\eta(\lambda,q)) \right).
    \end{equation}
    Up to reducing $\Lambda$ and $\varepsilon_1$, the map $A$ is well-defined, moreover $A(0,\cdot)=\mathrm{Id}_{\Phi(\Lambda\times S)}$ and 
    \begin{equation}
    \left.    \frac{d}{ds}\right |_{s=0} A(s,(\lambda_0,q_0))=  \Big(0,\pi_* \big (W^{\lambda_0}(e^{\tau\vec{G}}(\lambda_0))\big)\Big) = \big(0,\dot{\gamma}^{\lambda_0}(\tau)\big).
    \end{equation}
    Hence, the differential of $A$ is invertible at $(0,(\lambda_0,q_0))$, since $S$ is transverse to the support of $\gamma^{\lambda_0}$ at $\gamma^{\lambda_0}(\tau)$ in coordinates $\theta^\lambda$. Again, there exist open neighborhoods $\mathcal{V}'$ of $(0,(\lambda_0,q_0))$ and $\mathcal{W}'$ of $(\lambda_0,q_0)$ such that $A:\mathcal{V}' \to \mathcal{W}'$ is a diffeomorphism. We can now choose an open neighborhood $U_2 \subset U_1$ of $q_0$ and $0<\varepsilon_2\le \varepsilon_1$ such that 
    \begin{equation}
        \gamma^\lambda([\tau-\varepsilon_2,\tau+\varepsilon_2]) \subset U_2 \quad \forall \lambda \in \Lambda, 
    \end{equation} 
    possibly reducing $\Lambda$, and assume that $\mathcal{W}'=\Lambda \times U_2$. For all $\lambda \in \Lambda$ we define the $1$-form $\omega^\lambda$ on $U_2$ as follows. For all $q \in U_2$ there exist unique $s \in (-\varepsilon_1,\varepsilon_1)$ and, by definition of $\Phi$, $\tilde{q} \in (\theta^\lambda)^{-1}(S)$ such that $(\lambda,q)=A(s,(\lambda,\tilde{q}))$, therefore we set
    \begin{equation}
        \omega^\lambda(q):=e^{s W^\lambda}(\eta(\lambda,\tilde{q})).
    \end{equation}
    By construction, the family of $1$-forms $\omega^\lambda$ depends smoothly on $\lambda$, each $\omega^\lambda$ annihilates $\Delta$ (actually, $\Delta_2$) along $\gamma^\lambda|_{[\tau-\varepsilon_2,\tau+\varepsilon_2]}$ and, up to further restricting $\Lambda$, $\varepsilon_2$, and $U_2$, it never intersect $\Delta_3^\perp$. Notice that for all $\lambda \in \Lambda$ and $t \in [-\varepsilon_2,\varepsilon_2]$ it holds $(\lambda,\gamma^\lambda(\tau+t))= A(t,(\lambda, \gamma^\lambda(\tau)))$, hence
    \begin{equation}
    \label{eq:omegaflow}
        \omega^\lambda(\gamma^\lambda(\tau+t))= e^{t W^\lambda}(e^{\tau\vec{G}}(\lambda))=e^{(\tau+t)\vec{G}}(\lambda)
    \end{equation}
    for all $t \in [-\varepsilon_2,\varepsilon_2]$.   We can now fix $U:=U_2$ and $\varepsilon:=\varepsilon_2$ as in the statement, since there will be no more restrictions.
    \\

    The proof that $\mathscr{L}_{Y^\lambda} \omega^\lambda \equiv 0$ along $\gamma^\lambda|_{[\tau-\varepsilon,\tau+\varepsilon]}$ is given for fixed $\lambda$, and follows without modifications the one given in \cite{liu-sus}. We report it for the sake of completeness. \\
    
    For $t \in [\tau-\varepsilon,\tau+\varepsilon]$ let $\Gamma^\lambda(t):= e^{t\vec{G}}(\lambda)$. First observe that, by construction of $W^\lambda$, it holds $W^\lambda=\vec{h}_{Y^\lambda}$ along $\Gamma^\lambda$. Indeed, $Y^\lambda=a_1^\lambda X_1 + a_2^\lambda X_2$ hence
    \begin{align}
        \vec{h}_{Y^\lambda}& = a_1^\lambda \ \vec{h}_1 + a_2^\lambda \ \vec{h}_2 + \vec{a}_1^\lambda \ h_1 + \vec{a}_2^\lambda \ h_2 \\ 
         & = W^\lambda + \vec{a}_1^\lambda \ h_1 + \vec{a}_2^\lambda \ h_2,
    \end{align}
    while $h_1=h_2=0$ along $\Gamma^\lambda$. This, together with \eqref{eq:omegaflow}, implies that
    \begin{equation}
        \frac{d}{d t}\Gamma^\lambda(t)=\vec{h}_{Y^\lambda}(\Gamma^\lambda(t)).
    \end{equation}
    Let now $Z \in \Vect(U)$.
    From the previous equality it follows that
    \begin{equation}
        \frac{d}{d t}h_Z(\Gamma^\lambda(t))= h_{[Y^\lambda,Z]}(\Gamma^\lambda(t))=(\scal{\omega^\lambda}{[Y^\lambda,Z]})(\gamma^\lambda(t)). 
    \end{equation}
    On the other hand, 
    \begin{equation}
        \frac{d}{d t}h_Z(\Gamma^\lambda(t)) = \frac{d}{d t}\scal{\omega^\lambda}{Z}(\gamma^\lambda(t)) = (Y^\lambda \scal{\omega^\lambda}{Z})(\gamma^\lambda(t)).
    \end{equation} 
    Recalling the identity
    \begin{equation}
        \scal{\mathscr{L}_{Y^\lambda}\omega^\lambda}{Z}=Y^\lambda \scal{\omega^\lambda}{Z} - \scal{\omega^\lambda}{[Y^\lambda,Z]},
    \end{equation}
    we obtain that $\mathscr{L}_{Y^\lambda} \omega^\lambda \equiv 0$ along $\gamma^\lambda|_{[\tau-\varepsilon,\tau+\varepsilon]}$.
\end{proof}

We now prove that short arcs of regular abnormal paths taken as in Proposition~\ref{prop:nice} can be put simultaneously in a normal form via a family of coordinate charts parametrized smoothly by $\lambda \in \Lambda$. The proof is based on two results that we introduce now. 

\begin{definition}
    Let $M$ be a smooth manifold, $\mathcal{D}$ a smooth distribution on $M$, $X$ a smooth section of $\mathcal{D}$, and $\gamma:[0,T] \to M$ a smooth curve on $M$. Then $\mathcal{D}$ is \emph{invariant under $X$ along $\gamma$} if for every smooth section $Y$ of $\mathcal{D}$ and for all $t\in [0,T]$ it holds $[X,Y](\gamma(t)) \in \mathcal{D}(\gamma(t))$.
\end{definition}

\begin{remark}
    The above definition is weaker than the usual notion of invariance of a distribution under a vector field, since it is required 
    only along a reference curve.
\end{remark}

\begin{lemma}
\label{lem:Lemma3}
In the setting of Proposition \ref{prop:nice}, let
      \begin{equation}
          \mathcal{D^\lambda}=\spn\{X, Y^\lambda,X_4^\lambda, \dots, X_n^\lambda\}
      \end{equation} 
      be a family of smooth distributions of rank $n-1$ on $U$, where the vector fields $(X_i^\lambda)_{i=4,\dots,n}$ depend smoothly on $\lambda \in \Lambda$. Assume that for all $\lambda \in \Lambda$ the distribution $\mathcal{D}^\lambda$ is invariant under $Y^\lambda$ along $\gamma^\lambda|_{[\tau-\varepsilon,\tau+\varepsilon] \cap [0,T]}$. Then, up to restricting $\Lambda$, $U$, and $\varepsilon$ there exists a family of charts $\kappa^\lambda=(x_1,\dots,x_n)$ defined on $U$, each centered at $\gamma^\lambda(\tau)$ and whose image contains a closed cube $\bar{C}(\varepsilon)$, with respect to which on $\bar{C}(\varepsilon)$ it holds
      \begin{equation}
      \label{eq:XY}
          X = \partial_1, \quad Y^\lambda = \partial_2 + x_1 \sum_{i=1}^n \psi_i^\lambda(x)\partial_i,
      \end{equation}
      where $\psi_i^\lambda  : \bar{C}(\varepsilon)\to \R$ are smooth and depend smoothly on $\lambda \in \Lambda$, and
      \begin{equation}
      \label{eq:psi3}
          \psi_3^\lambda(x) = x_1 \eta_1^\lambda(x) +  x_3 \eta_3^\lambda(x) + \dots +  x_n \eta_n^\lambda(x)
      \end{equation}
      for some smooth functions  $\eta_i^\lambda : \bar{C}(\varepsilon)\to \R$, for $i\neq 2$, depending smoothly on $\lambda \in \Lambda$. Moreover, if it holds $[X,[X,Y^{\lambda_0}]](q_0)\notin \mathcal{D}^\lambda(q_0)$, then $\kappa^\lambda$  can be defined in such a way that $\eta_1^\lambda \neq 0$ on $\bar{C}(\varepsilon)$ for all $\lambda \in \Lambda$. \\
\end{lemma}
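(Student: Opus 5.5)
The plan is a parametric version of the construction in \cite{liu-sus}. Coordinates come from flows: $x_1$ is the flow time of $X$, $x_2$ the flow time of $Y^\lambda$, and $(x_3,\dots,x_n)$ parametrize a transversal slice through $\gamma^\lambda(\tau)$. Uniform control in $\lambda$ comes from the inverse function theorem applied to the map augmented by $\lambda$, as in the proof of Proposition~\ref{prop:nice}.

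\emph{Step 1: the chart.} Work in a fixed chart around $q_0$. Choose vector fields $V^\lambda$ depending smoothly on $\lambda$ with $V^\lambda(\gamma^\lambda(\tau))\notin\mathcal{D}^\lambda(\gamma^\lambda(\tau))$; a constant coordinate field works after shrinking $\Lambda$. Then $X,Y^\lambda,V^\lambda,X_4^\lambda,\dots,X_n^\lambda$ is a frame at $\gamma^\lambda(\tau)$. Let $\nu^\lambda(x_3,\dots,x_n)$ be an $(n-2)$-dimensional slice through $\gamma^\lambda(\tau)$ with tangent space spanned by $V^\lambda,X_4^\lambda,\dots,X_n^\lambda$ at the origin, for instance obtained by composing the flows of these fields. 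Define
\[
F^\lambda(x)=e^{x_1X}\circ e^{x_2Y^\lambda}\big(\nu^\lambda(x_3,\dots,x_n)\big).
\]
The map $(\lambda,x)\mapsto(\lambda,F^\lambda(x))$ has invertible differential at $(\lambda_0,0)$. After shrinking $\Lambda$, $U$ and $\varepsilon$, its inverse gives charts $\kappa^\lambda$ centered at $\gamma^\lambda(\tau)$, depending smoothly on $\lambda$, whose images contain a common cube $\bar C(\varepsilon)$. By construction $X=\partial_1$ everywhere. On $\{x_1=0\}$ we have $Y^\lambda=\partial_2$, and the curve $\gamma^\lambda$ is the $x_2$-axis, because $\gamma^\lambda$ is an integral line of $Y^\lambda$ by Proposition~\ref{prop:nice}. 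Hadamard's lemma in $x_1$, applied with parameters, gives $Y^\lambda-\partial_2=x_1\sum_i\psi_i^\lambda\partial_i$ with $\psi_i^\lambda$ smooth in $(\lambda,x)$. This is \eqref{eq:XY}.

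\emph{Step 2: $\psi_3^\lambda$ vanishes on the axis.} This is the step I expect to be the main obstacle, since it is where invariance along $\gamma^\lambda$ has to be used. I would show that along the axis $\mathcal{D}^\lambda=\spn\{\partial_1,\partial_2,\partial_4,\dots,\partial_n\}$, so that $dx_3$ annihilates $\mathcal{D}^\lambda$ there.
\begin{itemize}
\item Along the axis, $\partial_j$ for $j\ge3$ equals $(e^{x_2Y^\lambda})_*$ applied to $\partial_j$ at the origin, while $\partial_1=X$ and $\partial_2=Y^\lambda$ lie in $\mathcal{D}^\lambda$.
\item It therefore suffices that $(e^{tY^\lambda})_*\mathcal{D}^\lambda(\gamma^\lambda(\tau))=\mathcal{D}^\lambda(\gamma^\lambda(\tau+t))$.
\item Take a local frame $Z_k$ of $\mathcal{D}^\lambda$ and set $W_k(t)=(e^{-tY^\lambda})_*Z_k(\gamma^\lambda(\tau+t))$. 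Then $\dot W_k=(e^{-tY^\lambda})_*[Y^\lambda,Z_k]$.
\item By invariance along $\gamma^\lambda$, the bracket $[Y^\lambda,Z_k]$ evaluated on $\gamma^\lambda$ equals $\sum_l c_{kl}(t)Z_l$. Hence $\dot W_k=\sum_l c_{kl}W_l$, a linear ODE, so the span of the $W_k$ is constant in $t$.
\end{itemize}
Now $(Y^\lambda)^3=x_1\psi_3^\lambda$, so on $\{x_1=0\}$ we get $\psi_3^\lambda=\partial_1(Y^\lambda)^3=([X,Y^\lambda])^3$. On the axis this vanishes because $[X,Y^\lambda]\in\mathcal{D}^\lambda$ there, using invariance with the section $X$. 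Hence $\psi_3^\lambda(0,x_2,0,\dots,0)=0$. A parametric Hadamard lemma in the variables $(x_1,x_3,\dots,x_n)$ then yields \eqref{eq:psi3}.

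\emph{Step 3: the last claim.} Since $X=\partial_1$, we have $[X,[X,Y^\lambda]]=\partial_1^2Y^\lambda$ componentwise. Its $x_3$-component at the origin equals $\partial_1^2(x_1\psi_3^\lambda)(0)=2\partial_1\psi_3^\lambda(0)=2\eta_1^\lambda(0)$. Because $dx_3$ annihilates exactly $\mathcal{D}^\lambda$ at the origin, the hypothesis at $(\lambda_0,q_0)$ gives $\eta_1^{\lambda_0}(0)\neq0$. Continuity in $(\lambda,x)$ then allows shrinking $\Lambda$ and $\varepsilon$ so that $\eta_1^\lambda\neq0$ on $\bar C(\varepsilon)$ for all $\lambda\in\Lambda$.
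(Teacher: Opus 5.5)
Your proposal is correct and is essentially the paper's proof. Take the slice as the composition of flows, which is what Step 2 actually uses, since it needs $\partial_j\nu^\lambda(0)=X_j^\lambda\in\mathcal{D}^\lambda$ for $j\ge 4$ and not just the span condition; then your chart is exactly the paper's map $\Psi$, your linear-ODE argument reproves Liu--Sussmann's Lemma~4, which the paper invokes, and $\psi_3^\lambda$ vanishes on the axis for the same reason, namely $[X,Y^\lambda]\in\mathcal{D}^\lambda=\ker dx_3$ along $\gamma^\lambda$. The only minor deviation is in the last claim: you read off $\eta_1^\lambda(0)=\tfrac12\,dx_3\bigl([X,[X,Y^\lambda]](0)\bigr)$ for an arbitrary transversal $V^\lambda$, whereas the paper chooses $X_3^\lambda=[X,[X,Y^\lambda]]$ to get $\eta_1^\lambda(0)=\tfrac12$ directly, and both arguments work.
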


The proof of this lemma needs the following observations, that we state without proof after introducing some notation.

\begin{notation}
    Let $M,N$ be smooth manifolds, $q \in M$, and $\Phi: M \to N$ a smooth diffeomorphism. If $v \in T_qM$ we denote by
    \begin{equation}
        \Phi_* v \in T_{\Phi(q)}N
    \end{equation}
    the differential of $\Phi$ at $q$ applied to $v$, namely $\Phi_* v = d\Phi(q) v$. Moreover, if $X \in \Vect(M)$ we let $\Phi_*X$ be the push-forward of $X$ through $\Phi$. With this notation,
    \begin{equation}
        (\Phi_*X)(\Phi(q))= \Phi_*(X(q)).
    \end{equation}
\end{notation}

\begin{lemma}[{\cite[Lemma 4]{liu-sus}}]
    \label{lem:Lemma4}
    Let $M$ be a smooth manifold, $X \in \Vect(M)$, $\mathcal{D}$ a smooth distribution, and $q \in M$. Let $\gamma(t) = e^{t X}(q)$ and assume that $\mathcal{D}$ is invariant under $X$ along $\gamma$. Then for each times $t_1,t_2$ at which $\gamma$ is defined, letting $q_i= e^{t_i X}(q)$, $i=1,2$, it holds
    \begin{equation}
        \left(e^{(t_1-t_2)X}\right)_* v \in \mathcal{D}(q_1) \qquad \forall v \in \mathcal{D}(q_2).
    \end{equation}
\end{lemma}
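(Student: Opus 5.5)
The plan is to prove the lemma through the standard identity relating flows to invariance. Set $\Psi_s := e^{sX}$ and fix $v\in\mathcal{D}(q_2)$. It suffices to show that $V(s):=(e^{-sX})_*\, v_s$ stays in $\mathcal{D}$ along the curve, where $v_s$ is suitably transported. More cleanly, we work in reverse and consider the curve of subspaces
\begin{equation}
L(s) := \left(e^{-sX}\right)_* \mathcal{D}(\gamma(t_2+s)) \subset T_{q_2}M,
\end{equation}
defined for all $s$ with $t_2+s$ in the domain of $\gamma$. We have $L(0)=\mathcal{D}(q_2)$. Taking $s=t_1-t_2$ and applying $(e^{(t_1-t_2)X})_*$, the statement is equivalent to $L(s)\equiv\mathcal{D}(q_2)$ for all admissible $s$: then $(e^{(t_1-t_2)X})_*\mathcal{D}(q_2)=\mathcal{D}(q_1)$, which gives the claim in both directions.

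To prove that $L(s)$ is constant, we choose locally near each point of $\gamma$ a frame $Y_1,\dots,Y_k$ of $\mathcal{D}$ and set $w_j(s):=(e^{-sX})_*Y_j(\gamma(t_2+s))\in T_{q_2}M$. The key formula is
\begin{equation}
\frac{d}{ds}w_j(s) = \left(e^{-sX}\right)_*[X,Y_j](\gamma(t_2+s)).
\end{equation}
By the invariance hypothesis, $[X,Y_j](\gamma(t_2+s)) = \sum_i c_{ij}(s) Y_i(\gamma(t_2+s))$ with $c_{ij}$ smooth, because the $Y_i$ form a frame and the coefficients depend smoothly on $s$. Hence $\dot w_j=\sum_i c_{ij}w_i$. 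This is a linear ODE in $(T_{q_2}M)^k$, so the frame evolves as $w(s)=w(s_0)\Phi(s)$ with $\Phi(s)$ an invertible $k\times k$ matrix, the fundamental solution. Therefore $\spn\{w_j(s)\}=\spn\{w_j(s_0)\}$, i.e. $L$ is locally constant.

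Since the time interval between $t_1$ and $t_2$ is compact and connected, we cover it by finitely many subintervals on which a local frame exists. Local constancy together with connectedness then gives $L(s)=L(0)$ throughout, and the conclusion follows.

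The main obstacle is justifying the derivative formula and the smoothness of the coefficients $c_{ij}$. The formula is the standard identity $\frac{d}{ds}(e^{-sX})_*Y(e^{sX}q) = (e^{-sX})_*[X,Y](e^{sX}q)$, up to a sign convention for the bracket; a sign flip would not affect the argument. The smoothness of the $c_{ij}$ follows from expressing the bracket in the local frame completed to a full frame of $TM$.
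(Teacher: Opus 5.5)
The paper does not prove this lemma, so there is nothing in it to compare against: the lemma is quoted from \cite[Lemma 4]{liu-sus} and explicitly stated without proof. Your argument is correct, self-contained, and the standard one. Pulling a local frame $Y_1,\dots,Y_k$ of $\mathcal{D}$ back to $T_{q_2}M$ along the flow turns the invariance hypothesis into the linear system $\dot w_j=\sum_i c_{ij}(s)w_i$. Its fundamental matrix is invertible, so $L(s)$ is locally constant, hence constant on the connected interval between $0$ and $t_1-t_2$. This gives the equality $(e^{(t_1-t_2)X})_*\mathcal{D}(q_2)=\mathcal{D}(q_1)$, not just the inclusion, and it never uses that $X$ is itself a section of $\mathcal{D}$.

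Only cosmetic points remain. Delete the false start with $V(s)$. Applying the invariance hypothesis to a merely local frame is legitimate because the bracket is local: replace $Y_j$ by $\chi Y_j$ with a cutoff $\chi\equiv 1$ near the relevant point of $\gamma$, so that $[X,\chi Y_j]=[X,Y_j]$ there. Finally, compactness of the time interval is not needed, since local constancy on a connected interval already suffices.
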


\begin{lemma}[{\cite[Chapter 2]{ABB}}]
\label{lem:Phi}
    Let $M$ be a smooth manifold and $X_1,\dots,X_n \in \Vect(M)$ be linearly independent at $q \in M$. The map
    \begin{equation}
        \Phi: \R^n \to M , \quad \Phi(x_1,\dots,x_n)= e^{x_1X_1}\circ \dots \circ e^{x_nX_n}(q)
    \end{equation}
    is a local diffeomorphism in a neighborhood of $0$ and, letting $x=(x_1,\dots,x_n)$, we have
    \begin{equation}
        \Phi_* \left(\partial_{i}(x)\right) = \left( (e^{x_1X_1})_* \circ \dots \circ (e^{x_{i}X_{i}})_* X_i\right)(\Phi(x)).
    \end{equation}
\end{lemma}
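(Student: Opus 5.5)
The plan is to compute the partial derivatives of $\Phi$ directly from the chain rule, using only the definition of the flow. The inverse function theorem then gives the local diffeomorphism property from the value of the differential at the origin. First, fix a neighborhood of $0$ small enough that all the flows $e^{x_jX_j}$ appearing in the composition are defined at the relevant points. This is possible by continuous dependence of flows on initial data, so $\Phi$ is a well-defined smooth map on that neighborhood.

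For fixed $i$ and fixed $x$, I would factor
\begin{equation}
\Phi(x) = F_{i-1}\bigl(e^{x_iX_i}(p_i)\bigr), \qquad F_{i-1}:=e^{x_1X_1}\circ\dots\circ e^{x_{i-1}X_{i-1}}, \quad p_i:=e^{x_{i+1}X_{i+1}}\circ\dots\circ e^{x_nX_n}(q).
\end{equation}
Here $p_i$ and $F_{i-1}$ do not depend on $x_i$. By the definition of the flow, $\frac{\partial}{\partial x_i}e^{x_iX_i}(p_i)=X_i(e^{x_iX_i}(p_i))$. The chain rule then gives
\begin{equation}
\Phi_*\partial_i(x) = (e^{x_1X_1})_*\circ\dots\circ(e^{x_{i-1}X_{i-1}})_*\, X_i\bigl(e^{x_iX_i}(p_i)\bigr).
\end{equation}

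To match the stated formula, I would use that a flow preserves its own generator. From $e^{sX_i}\circ e^{x_iX_i}=e^{x_iX_i}\circ e^{sX_i}$, differentiating in $s$ at $s=0$ gives $(e^{x_iX_i})_*X_i=X_i$. Hence $X_i(e^{x_iX_i}(p_i))=\bigl((e^{x_iX_i})_*X_i\bigr)(e^{x_iX_i}(p_i))$. Rewriting each pushed-forward vector via the convention $(\Psi_*Y)(\Psi(p))=\Psi_*(Y(p))$, the composite vector equals the vector field $(e^{x_1X_1})_*\circ\dots\circ(e^{x_iX_i})_*X_i$ evaluated at the image point, and that point is exactly $\Phi(x)$. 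This yields the claimed identity.

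Finally, at $x=0$ all the flows are the identity, so $\Phi_*\partial_i(0)=X_i(q)$. By assumption these vectors are linearly independent, so $d\Phi(0)$ is invertible, and the inverse function theorem gives that $\Phi$ is a local diffeomorphism near $0$. There is no real obstacle here. The only point needing care is the bookkeeping of base points in the push-forward notation, namely checking that the point where the composed vector field is evaluated is indeed $\Phi(x)$.
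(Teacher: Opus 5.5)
Your proof is correct. You factor $\Phi(x)=F_{i-1}\bigl(e^{x_iX_i}(p_i)\bigr)$, apply the chain rule, absorb the extra factor via $(e^{x_iX_i})_*X_i=X_i$, and invoke the inverse function theorem at $x=0$, where $d\Phi(0)\partial_i=X_i(q)$ and $\dim M=n$ by the paper's standing convention; this is the standard argument, and the paper itself states the lemma without proof, referring to \cite[Chapter 2]{ABB}, so there is no in-paper proof to compare against.
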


\begin{proof}[Proof of Lemma \ref{lem:Lemma3}]
     Again we can assume that $\tau \in (0,T)$. Let $X_1:=X$ and $X_2^\lambda:=Y^\lambda$ for all $\lambda \in \Lambda$. Up to restricting $\Lambda$, $U$, and $\varepsilon$, we find an additional family of vector fields $X_3^\lambda$, depending smoothly on $\lambda$ and defined on $U$, such that $X_1,X_2^\lambda,X_3^\lambda,\dots,X_n^\lambda$ form a local frame  for all $\lambda \in \Lambda$.

    Let $r>0$ and consider the map $\Psi:\Lambda \times C(r) \to \Lambda \times U$ defined as
    \begin{equation}
        \Psi(\lambda,x)=\left( \lambda, e^{x_1X_1}\circ e^{x_2X_2^\lambda}\circ e^{x_3X_3^\lambda}\circ \cdots \circ e^{x_nX_n^\lambda}(\gamma^\lambda(\tau))\right),
    \end{equation}
    which is well-defined (up to reducing $\Lambda$ and $r$) and smooth. The differential of $\Psi$ at $(\lambda_0,0)$ is invertible, hence by the inverse function theorem we can find an open neighborhood $\mathcal{V}$ of $(\lambda_0,0)$ and an open neighborhood $\mathcal{W}$ of $(\lambda_0,q_0)$ such that $\Psi:\mathcal{V} \to \mathcal{W}$ is a diffeomorphism. Possibly restricting $\Lambda$, $U$, and $\varepsilon$, we assume that $\mathcal{W}= \Lambda \times U$. The inverse of $\Psi$ on $\mathcal{W}$ verifies
    \begin{equation}
        \Psi^{-1}(\lambda,q) = (\lambda, \kappa^\lambda(q)) \quad \forall (\lambda,q) \in \Lambda \times U,
    \end{equation}
    where the maps $\kappa^\lambda$ are a smooth family of diffeomorphisms from $U$ onto their images. Such maps define smooth charts on $U$, centered at $\gamma^\lambda(\tau)$, that rectify the curves  $\gamma^\lambda|_{[\tau-\varepsilon,\tau+\varepsilon]}$. More precisely, since $\gamma^\lambda|_{[\tau-\varepsilon,\tau+\varepsilon]}$ are integral curves of $X^\lambda_2$, we have that
    \begin{equation}
        \kappa^\lambda(\gamma^\lambda(\tau+t)) = (0,t,0,\dots,0),\qquad \forall t\in [-\varepsilon,\varepsilon],\, \lambda \in \Lambda.
    \end{equation}
     Up to further restricting $\Lambda$ and $\varepsilon$ we have $\bar{C}(\varepsilon) \subset \kappa^\lambda(U) \subset C(r)$ for all $\lambda \in \Lambda$. \\
 
    Let us compute $X_1$ and $X_2^\lambda$ in coordinates $\kappa^\lambda$. First of all we have $X_1=\partial_1$. Moreover $X_2^\lambda(x) = \partial_2(x)$ 
    at each point $x=(x_1,\dots,x_n)$ with $x_1=0$, hence 
    \begin{equation}
        X_2^\lambda = \partial_2 + x_1 \sum_{i=1}^n \psi_i^\lambda \partial_i,
    \end{equation}
     with the functions $\psi_i^\lambda$ depending smoothly on $\lambda$. We prove that \eqref{eq:psi3} holds. It is enough to show that $\psi_3^\lambda=0$ on $\{(0,x_2,0\dots,0) \mid x_2 \in [-\varepsilon,\varepsilon]\}$, that is, along the support of $\gamma^\lambda|_{[\tau-\varepsilon,\tau+\varepsilon]}$ in coordinates $\kappa^\lambda$. Notice that
    \begin{equation}\label{eq:bracket}
        [X_1,X_2^\lambda] = \sum_{i=1}^n \left (\psi_i^\lambda + x_1 \frac{\partial \psi_i^\lambda}{\partial x_1} \right) \partial_i.
    \end{equation}
    Thus, we just need to prove that $[X_1,X_2^\lambda]$ is a linear combination of the $(\partial_i)_{i\neq 3}$ along $\gamma^\lambda|_{[\tau-\varepsilon,\tau+\varepsilon]}$. To prove it, recall that $\partial_1 = X_1$ and, by Lemma \ref{lem:Phi}, we have
    \begin{equation}\label{eq:todo}
        \partial_i(0,x_2,0,\dots,0)= \left(e^{x_2X_2^\lambda}\right)_*X_i^\lambda \quad \forall i\geq 2, \, x_2 \in [-\varepsilon,\varepsilon].
    \end{equation}
    Using these expressions in \eqref{eq:bracket}, and noting that $[X_1,X_2^\lambda](\gamma^\lambda(\tau+t)) \in \mathcal{D}^\lambda(\gamma^\lambda(\tau+t))$ for all $|t|<\varepsilon$ (by the invariance of $\mathcal{D}^\lambda$ with respect to $X_2^\lambda$ along $\gamma^\lambda$) we deduce that, on $\gamma^\lambda$, there is no $\partial_3$ component in \eqref{eq:bracket}.
      
    To conclude, note that if $[X,[X,Y^{\lambda_0}]](q_0) \notin \mathcal{D}^\lambda(q_0)$, up to further shrinking $\Lambda, U_1$, and $\varepsilon$, we can assume that the same holds for $Y^\lambda$ at each $\gamma^\lambda(\tau)$, and let $X_3^\lambda=[X,[X,Y^\lambda]]$. With this choice we find that, again in coordinates $\kappa^\lambda$,
    \begin{equation}
        X_3^\lambda = [X_1,[X_1,X_2^\lambda]]= \sum_{i=1}^n \left( 2\frac{\partial \psi_i^\lambda}{\partial x_1} + x_1 \frac{\partial^2\psi_i^\lambda}{\partial x_1^2} \right) \partial_i,
    \end{equation}
    and since $X_3(\gamma^\lambda(\tau)) = \partial_3(0)$ we deduce that $\eta_1^\lambda(0) \neq 0$ for all $\lambda \in \Lambda$. If necessary, we can restrict $\Lambda$ and $\varepsilon$ so that $\eta_1^\lambda$ never vanishes on $\bar{C}(\varepsilon)$ for all $\lambda \in \Lambda$.
\end{proof}

\begin{proposition}
\label{prop:thm4}
    In the same setting as Proposition \ref{prop:nice}, there exist an open neighborhood $\Lambda$ of $\lambda_0$, an open neighborhood $U$ of $q_0$,  a family of coordinate charts $(\kappa^\lambda)_{\lambda \in \Lambda}$ defined on $U$, depending smoothly on $\lambda$, and $\varepsilon>0$ such that
    \begin{itemize}
        \item  the curves $\gamma^\lambda |_{[\tau-\varepsilon, \tau+\varepsilon]}$ are contained in $U$ for all $\lambda \in \Lambda$;
        \item  each $\kappa^\lambda$ is centered at $\gamma^\lambda(\tau)$ and verifies $\bar{C}(\varepsilon) \subset \kappa^\lambda(U)$;
        \item there exist $X \in \Vect(U)$, $(Y^\lambda)_{\lambda \in \Lambda} \subset \Vect(U)$ depending smoothly on $\lambda$, such that for all $\lambda \in \Lambda$ it holds $\Delta = \spn\{X,Y^\lambda\}$ on $U$;
    \end{itemize} 
    so that for all $\lambda \in \Lambda$ the following holds:
    \begin{itemize}
        \item [(i)] in the chart $\kappa^\lambda$, on the cube $\bar{C}(\varepsilon)$ the vector fields $X$ and $Y^\lambda$ are written  as \eqref{eq:XY} and $\psi_3^\lambda$ verifies \eqref{eq:psi3} with $\eta_1^\lambda$ never vanishing;
        \item [(ii)]in the chart $\kappa^\lambda$ it holds $\gamma^\lambda(\tau+t)=(0,t,0,\dots,0)$ for all $t \in [-\varepsilon,\varepsilon]$;
    \end{itemize}
 with the understanding that $\varepsilon<\min\{\tau,T-\tau\}$ if $\tau \in (0,T)$, while the interval $[\tau-\varepsilon,\tau+\varepsilon]$ is replaced with $[0,\varepsilon]$ (resp.\ $[T-\varepsilon,T]$) if $\tau=0$ (resp.\ $\tau=T$). 
\end{proposition}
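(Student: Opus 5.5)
The plan is to combine Proposition~\ref{prop:nice} with Lemma~\ref{lem:Lemma3}, following the strategy of \cite{liu-sus} but keeping track of the smooth dependence on $\lambda$. We start from the data given by Proposition~\ref{prop:nice}: $\varepsilon$, $U$, $\Lambda$, the unit horizontal fields $Y^\lambda$ tangent to $\gamma^\lambda$, and the $1$-forms $\omega^\lambda$. These forms satisfy three properties: $\omega^\lambda|_{\gamma^\lambda}=e^{t\vec G}(\lambda)$, $\omega^\lambda\notin\Delta_3^\perp$, and $\mathscr{L}_{Y^\lambda}\omega^\lambda=0$ along $\gamma^\lambda$.

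\textbf{Choice of $X$.} We need a single horizontal field $X$, independent of $\lambda$, with $\Delta=\spn\{X,Y^\lambda\}$. Since $Y^{\lambda_0}(q_0)$ is a unit vector in $\Delta(q_0)$, let $X$ be a horizontal field that is transverse to $Y^{\lambda_0}(q_0)$ inside $\Delta$ (for instance, a suitable fixed combination of $X_1,X_2$). Shrinking $\Lambda$ and $U$ then gives $\Delta=\spn\{X,Y^\lambda\}$ on $U$ for all $\lambda\in\Lambda$.

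\textbf{Construction of $\mathcal{D}^\lambda$.} We take $\mathcal{D}^\lambda:=\ker\omega^\lambda$, which has rank $n-1$. To see this, note that $\omega^\lambda$ does not vanish on $U$: it is nonzero at $q_0$, so this holds after shrinking. The distribution $\mathcal{D}^\lambda$ contains $X$ and $Y^\lambda$ along $\gamma^\lambda$, since $\omega^\lambda$ annihilates $\Delta$ there. This is only along the curve, however, while Lemma~\ref{lem:Lemma3} asks for $\mathcal{D}^\lambda=\spn\{X,Y^\lambda,X_4^\lambda,\dots,X_n^\lambda\}$ on $U$. To fix this, we modify $\omega^\lambda$ away from $\gamma^\lambda$ by subtracting a correction vanishing to first order on the curve. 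Alternatively, and more cleanly, we define $\mathcal{D}^\lambda$ directly as the span of $X$, $Y^\lambda$ and $n-3$ fields $X_i^\lambda$ chosen to be smooth in $\lambda$ and spanning, together with $X$ and $Y^\lambda$, $\ker\omega^\lambda$ along $\gamma^\lambda$. We obtain them by extending a smooth basis of $\ker(e^{\tau\vec G}(\lambda))$ complementary to $\Delta$ through the flow of $Y^\lambda$, exactly as $\omega^\lambda$ itself was built through the flow chart $A$. Invariance along $\gamma^\lambda$ then follows from $\mathscr{L}_{Y^\lambda}\omega^\lambda=0$ along $\gamma^\lambda$. Indeed, for any section $Z$ of $\mathcal{D}^\lambda$, the identity $\langle\omega^\lambda,[Y^\lambda,Z]\rangle = Y^\lambda\langle\omega^\lambda,Z\rangle-\langle \mathscr{L}_{Y^\lambda}\omega^\lambda,Z\rangle$ vanishes along the curve, because $\langle\omega^\lambda,Z\rangle\equiv0$ on the curve and its derivative along the tangent $Y^\lambda$ is zero. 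This is the step I expect to need the most care: I must ensure that $\mathcal{D}^\lambda$ agrees with $\ker\omega^\lambda$ on the curve, so that $[Y^\lambda,Z]\in\ker\omega^\lambda=\mathcal{D}^\lambda$ there. Having them agree only to zeroth order suffices for this computation, because $Z$ is a section of $\mathcal{D}^\lambda$ and hence $\langle\omega^\lambda,Z\rangle$ vanishes identically on the curve.

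\textbf{Applying Lemma~\ref{lem:Lemma3}.} We check the nondegeneracy hypothesis $[X,[X,Y^{\lambda_0}]](q_0)\notin\mathcal{D}^{\lambda_0}(q_0)$. At $q_0$ the form $\omega^{\lambda_0}(q_0)=e^{\tau\vec G}(\lambda_0)$ lies in $\Delta_2^\perp\setminus\Delta_3^\perp$. Write $\Delta_3=\Delta_2+\spn\{[X,[X,Y]],[Y,[X,Y]]\}$. The Goh-type relation $u_1h_{112}+u_2h_{212}=0$, rewritten in the frame $(X,Y^\lambda)$ with $Y^\lambda$ tangent to the curve, gives $\langle\omega,[Y,[X,Y]]\rangle=0$ on the curve, modulo terms that vanish by $h_1=h_2=h_{12}=0$. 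Since $\omega\notin\Delta_3^\perp$, it follows that $\langle\omega,[X,[X,Y]]\rangle\neq0$. Lemma~\ref{lem:Lemma3} then provides charts $\kappa^\lambda$ with \eqref{eq:XY}, \eqref{eq:psi3} and $\eta_1^\lambda\neq0$, which gives (i). The rectification of $\gamma^\lambda$ in the proof of that lemma gives (ii). Finally, we shrink $\Lambda$, $U$ and $\varepsilon$ once more so that all conditions hold simultaneously.
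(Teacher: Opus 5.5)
Your proposal is correct and follows essentially the paper's route. You pass from $\ker\omega^\lambda$ to a distribution spanned by $X$, $Y^\lambda$ and $n-3$ further fields that agrees with $\ker\omega^\lambda$ along $\gamma^\lambda$, deduce invariance under $Y^\lambda$ from $\mathscr{L}_{Y^\lambda}\omega^\lambda=0$ along the curve, obtain $[X,[X,Y^\lambda]]\notin\ker\omega^\lambda$ from $\omega^\lambda\notin\Delta_3^\perp$, and apply Lemma~\ref{lem:Lemma3}.

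The differences are minor. The paper builds the extra fields by $\hat g$-orthogonal projection onto $\ker\omega^\lambda$ and proves invariance via a basis expansion rather than your slightly cleaner zeroth-order argument. It also gets $\langle\omega^\lambda,[Y^\lambda,[X,Y^\lambda]]\rangle=0$ from the Lie-derivative identity with $Z=[X,Y^\lambda]$ instead of the Goh relation. If you keep your flow-transport construction, state explicitly that the transported fields stay in $\ker\omega^\lambda$ along $\gamma^\lambda$, which again follows from $\mathscr{L}_{Y^\lambda}\omega^\lambda=0$ there.
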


\begin{proof}
    Again we can assume that $\tau \in (0,T)$. Let $\varepsilon, \Lambda, U, Y^\lambda, \omega^\lambda$ be as in Proposition \ref{prop:nice}, so that \emph{(1),(2),(3),(4)} are verified. Up to further shrinking $\Lambda$, $U$, and $\varepsilon$, we can choose $X\in \Vect(U)$ so that $\Delta=\mathrm{span}\{X,Y^\lambda\}$ on $U$ for all $\lambda \in \Lambda$. For the rest of the proof, ``along $\gamma^\lambda$'' is understood as ``along $\gamma^\lambda|_{[\tau-\varepsilon, \tau+\varepsilon]}$''.
     
    Consider the distributions of rank $n-1$ on $U$ defined as $\mathcal{D}^\lambda=\ker(\omega^\lambda)$. Since $\mathscr{L}_{Y^\lambda} \omega^\lambda \equiv 0$ along $\gamma^\lambda$, for any $Z \in \Vect(U)$ it holds
    \begin{equation}
        \scal{\omega^\lambda}{[Y^\lambda,Z]} = Y^\lambda(\scal{\omega^\lambda}{Z})- \scal{\mathscr{L}_{Y^\lambda} \omega^\lambda}{Z} = Y^\lambda(\scal{\omega^\lambda}{Z})
    \end{equation}
    along $\gamma^\lambda$. Now:
    \begin{itemize}
        \item taking a smooth section $Z$ of $\mathcal{D}^\lambda$, we deduce that $\mathcal{D}^\lambda$ is invariant under $Y^\lambda$ along $\gamma^\lambda$;
        \item taking $Z=X$ we find that $[X,Y^\lambda] \in \mathcal{D}^\lambda$ along $\gamma^\lambda$, since $\omega^\lambda$ annihilates $\Delta$ along $\gamma^\lambda$;
        \item taking $Z=[X,Y^\lambda]$ we deduce that $\scal{\omega^\lambda}{[Y^\lambda,[X,Y^\lambda]]}=0$ along $\gamma^\lambda$.
    \end{itemize}   
    Since $\omega^\lambda \notin \Delta_3^\perp$ on $U$ we conclude that $[X,[X,Y^\lambda]] \notin \mathcal{D}^\lambda$ along $\gamma^\lambda$. Up to restricting $\Lambda$, $U$, and $\varepsilon$, we can assume that $[X,[X,Y^\lambda]](q) \notin \mathcal{D}^\lambda(q)$ for all $q \in U$ and $\lambda \in \Lambda$.

    Notice that by construction $X,Y^\lambda \in \mathcal{D}^\lambda$ only along $\gamma^\lambda$. Finally, we modify $\mathcal{D}^\lambda$ into a distribution $\bar{\mathcal{D}}^\lambda$ so that $X,Y^\lambda$ are sections of $\bar{\mathcal{D}}^\lambda$: then, we can apply Lemma \ref{lem:Lemma3} and conclude. 
    
    To this aim, we construct a basis of sections $\{Z_1^\lambda,Z_2^\lambda, Z_4^\lambda,\dots,Z_n^\lambda\}$ of $\mathcal{D}^\lambda$ on $U$, chosen so that $Z_1^\lambda=X$ and $Z_2^\lambda=Y^\lambda$ along $\gamma^\lambda$ and $Z_i^\lambda$ depend smoothly on $\lambda$ (up to shrinking $\varepsilon$ and $U$ uniformly in $\lambda$). Let $\hat{g}$ be a Riemannian metric that extends $g$ on $U$ (possibly restricting $U$, and $\varepsilon$ as a consequence). 
    For every $q \in U$, let $Z_1^\lambda(q)$ and $Z_2^\lambda(q)$ be the $\hat{g}_q$-orthogonal projections of $X(q)$ and $Y^\lambda(q)$ respectively on $\mathcal{D}^\lambda(q)$. Notice that since $\omega^\lambda$ annihilates $\Delta$ along $\gamma^\lambda$ and $\mathcal{D}^\lambda=\ker(\omega^\lambda)$, then $Z_1^\lambda = X$ and $Z_2^\lambda=Y^\lambda$ along $\gamma^\lambda$. By shrinking $\Lambda$, $U$, and $\varepsilon$ if necessary, we can find $X_3,\dots,X_n \in \Vect(U)$ such that $Z_1^\lambda,Z_2^\lambda,X_3,\dots,X_n$ are linearly independent on $U$ and, for $i=3,\dots,n$, consider the pointwise $\hat{g}$-orthogonal projection $\hat{Z}_i^\lambda$ of $X_i$ on $\mathcal{D}^\lambda$. Up to shrinking $\Lambda$, $U$, and $\varepsilon$, we can find $j_1,\dots,j_{n-3} \in \{3,\dots,n\}$ such that $\mathcal{D}^\lambda= \spn\{Z_1^\lambda,Z_2^\lambda,\hat{Z}_{j_1}^\lambda, \dots , \hat{Z}_{j_{n-3}}^\lambda\}$ on $U$. Notice that we might need to shrink $\Lambda$ in order to select the \emph{same} indices $j_1,\dots,j_{n-3}$ for all the $\lambda \in \Lambda$.  We conclude by setting $Z_i^\lambda = \hat{Z}_{j_{i-3}}^\lambda$ for $i=4,\dots,n$.
    
    
   For each $\lambda \in \Lambda$, define the new distribution $\bar{\mathcal{D}}^\lambda$ on $U$ as
    \begin{equation}
        \bar{\mathcal{D}}^\lambda = \spn\{X,Y^\lambda, Z_4^\lambda,\dots, Z_n^\lambda\}. 
    \end{equation}
    Since $\bar{\mathcal{D}}^\lambda = \mathcal{D}^\lambda$ along $\gamma^\lambda$, we have $[X,[X,Y^\lambda]](\gamma^\lambda(\tau+t)) \notin \bar{\mathcal{D}}^\lambda(\gamma^\lambda(\tau+t))$ for all $t \in [-\varepsilon,\varepsilon]$. \\
    
    We prove the invariance of $\bar{\mathcal{D}}^\lambda$ under $Y^\lambda$ along $\gamma^\lambda$. If $Z$ is a smooth section of $\bar{\mathcal{D}}^\lambda$ we can rewrite
    \begin{equation}
        Z = \nu_1^\lambda X + \nu_2^\lambda Y^\lambda + \sum_{i=4}^{n} \nu_i^\lambda Z_i^\lambda,
    \end{equation}
    for some smooth functions $\nu_i^\lambda$ depending smoothly on $\lambda$. Then
    \begin{equation}
        [Y^\lambda,Z] = \nu_1^\lambda[Y^\lambda,X]+ \sum_{i=4}^{n}\nu_i^\lambda [Y^\lambda,Z_i^\lambda] 
         + (Y^\lambda \nu_1^\lambda) X + (Y^\lambda \nu_2^\lambda) Y^\lambda + \sum_{i=4}^{n} (Y^\lambda \nu_i^\lambda) Z_i^\lambda.
    \end{equation}
    By invariance of $\mathcal{D}^\lambda$ along $\gamma^\lambda$ we have that $[Y^\lambda,X]$ and $[Y^\lambda,Z_i^\lambda]$ are sections of $\mathcal{D}^\lambda$ along $\gamma^\lambda$, as well as $X,Y^\lambda, Z_i^\lambda$ for $i \ge 4$. Since $\bar{\mathcal{D}}^\lambda$ and $\mathcal{D}^\lambda$ coincide on $\gamma^\lambda$, we conclude.
\end{proof}
\begin{subsection}{Proof of Theorem \ref{thm:opt_cpt}}
Let $U,\Lambda,\kappa^\lambda,\varepsilon$ be as in Proposition \ref{prop:thm4}, again assuming that $\tau \in (0,T)$ without loss of generality. Fix $0<\varepsilon'< \varepsilon$. For each $\lambda \in \Lambda$ we replace $\gamma^\lambda$ with its restriction to $[\tau-\varepsilon', \tau+\varepsilon']$ so that, in coordinates $\kappa^\lambda$, we have
\begin{equation}
    \gamma^\lambda(\tau+t)=(0,t,0,\dots,0)=: x^\lambda(t) 
    , \quad \forall t\in[-\varepsilon',\varepsilon'].
\end{equation}
Notice that $x^\lambda :[-\varepsilon',\varepsilon']\to \bar{C}(\varepsilon)\subset \R^n$ is the trajectory that passes through $x=0$ at time $0$ corresponding to the control $(u,v) \equiv (0,1)$ of the control system in $\bar{C}(\varepsilon)$:
\begin{align}\label{eq:sigmalambda}
    \begin{cases}
        \dot{x}_1= u + v x_1 \psi_1^\lambda(x), \\
        \dot{x}_2= (1+x_1\psi_2^\lambda(x))v,\\
        \dot{x}_i= vx_1\psi_i^\lambda(x), \quad i=3,\dots,n,
    \end{cases} \quad \quad u^2+v^2 = 1, \tag{$\Sigma^\lambda$}
\end{align}
where
\begin{equation}
    \psi_3^\lambda(x)=x_1\eta_1^\lambda(x) + x_3\eta_3^\lambda(x) + \dots + x_n\eta_n^\lambda(x).
\end{equation}

The admissible trajectories $y:[a,b] \to \bar{C}(\varepsilon)$ for the system \eqref{eq:sigmalambda} with control $(u,v)$ are the expressions in coordinates $\kappa^\lambda$ of the horizontal curves $\gamma$ on $(\kappa^\lambda)^{-1}(\bar{C}(\varepsilon))$ that satisfy 
\begin{equation}
    \dot{\gamma}(t) = u(t) X(\gamma(t)) + v(t) Y^\lambda(\gamma(t)) \quad \text{for a.e. } t \in [a,b].
\end{equation}
Fix an open set $W \subset \R^n$ such that $\bar{C}(\varepsilon') \Subset W \Subset C(\varepsilon)$. Possibly restricting $\Lambda$ to ensure that the functions $\psi_i^\lambda$ and $\eta_i^\lambda$ are well defined also on the closure $\bar{\Lambda} \times \bar{C}(\varepsilon)$, let
\begin{align}
    C_1 &= \max \{ |\dot{y}(\cdot)| \mid y \ \text{is a trajectory of {\eqref{eq:sigmalambda}}}  \text{ contained in } \bar{W}, \ \lambda \in \bar{\Lambda} \} >0, \\
    C_2 &= \max \{ |\psi_i^\lambda(x)| \mid x \in \bar{W}, \ i=1,\dots,n, \ \lambda \in \bar{\Lambda}\}>0, \\
    C_3 &= \max \left \{|\eta_i^\lambda(x)| \mid x\in \bar{W}, \ i=3,\dots,n, \ \lambda \in \bar{\Lambda}\right \}>0, \\ 
    m &= \min \left \{ |\eta_1^\lambda(x^\lambda (t))| \, \Big | -\varepsilon'\le t \le \varepsilon', \ \lambda \in \bar{\Lambda} \right \}>0. 
\end{align}

Fix moreover $\delta= \operatorname{dist}(C(\varepsilon'),W^c)>0$ and $L>0$ such that all the $(\eta_i^\lambda)_{ \lambda \in \Lambda}$ are $L$-Lipschitz continuous on $W$. Then set
\begin{equation}
    \rho_1 = \frac{\delta}{4C_1}, \quad 
    \rho_2 = \frac{m}{8C_1L}, \quad
    \rho_3 = \frac{m}{8(n-2)C_2C_3}, \quad
    \rho_4 = \frac{1}{7C_1C_2}.
\end{equation}

\begin{remark}
\label{rk:tau12}
    With these choices:
    \begin{itemize}
        \item if $y:[t_1,t_2] \to C(\varepsilon)$ is a trajectory of \eqref{eq:sigmalambda} through a point in $\bar{C}(\varepsilon')$ with $t_2-t_1 \le 2\rho_1$ then it must be contained in $W$;
        \item if $x,y \in W$ verify $|x-y| \le 2C_1 \rho_2$ then $|\eta_1^\lambda(x) - \eta_1^\lambda(y)| \le m/4$.
    \end{itemize}   
\end{remark}

Let $\rho = \min \{\varepsilon', \rho_1,\rho_2,\rho_3,\rho_4\}$ and from now on for all $\lambda \in \Lambda$ replace $\gamma^\lambda$ with its restriction to the interval $[\tau-\rho,\tau+\rho]$. Since $\gamma^\lambda$ has unit speed, we have $L(\gamma^\lambda)=2\rho$. The proof is concluded if we prove that if $\bar{\gamma}:[s_1,s_2]\to M$ is a length-minimizing trajectory parametrized with unit speed with the same endpoints as $\gamma^\lambda$ for some $\lambda \in \Lambda$, then $L(\bar{\gamma})=s_2-s_1 = 2\rho$ and $\bar{\gamma}(s)=\gamma^\lambda((\tau-\rho)+s-s_1)$ for all $s \in [s_1,s_2]$. 

 Notice that any such $\bar{\gamma}$  must be contained in $(\kappa^\lambda)^{-1}(C(\varepsilon))$. Otherwise, there would exist $s^* \in (s_1,s_2)$ such that $s^* = \sup \{s\in [s_1,s_2] \mid \kappa^\lambda \circ \bar{\gamma} \,([0,s]) \in C(\varepsilon)\}$. Since $\bar{\gamma}$ is a unit speed length-minimizer, then $s_2-s_1 = L(\bar{\gamma} \,)\le 2 \rho$. But then $\kappa^\lambda \circ \bar{\gamma}|_{[s_1,s^*]}$ would be a trajectory of \eqref{eq:sigmalambda} through a point in $\bar{C}(\varepsilon')$ with $s^*-s_1 \le s_2-s_1 \le 2\rho_1$, and by Remark~\ref{rk:tau12} it would be contained in $W$, contradicting the maximality of $s^*$. It follows also that such $\bar\gamma$, written in coordinates $\kappa^\lambda$, must be an admissible trajectory of \eqref{eq:sigmalambda}.\\

The rest of the proof is given for fixed $\lambda$ and follows with small modifications the one given in \cite{liu-sus}. We report it for the sake of completeness. Assume that $y :[s_1,s_2] \to \R^n$ is another trajectory of \eqref{eq:sigmalambda}, with control $(u_y,v_y)$, that joins $x^\lambda(-\rho)$ to $x^\lambda(\rho)$ in time $\sigma = s_2 - s_1$.  Assume that $\sigma \le 2\rho$. 

Denote $y(s)=(y_1(s),\dots,y_n(s))$ and $h(s) = \int_{s_1}^s v_y(r)\,dr$. Then $|h(s)| \le s-s_1$ for all $s\in [s_1,s_2]$, so $-\sigma \le h(s_2) \le \sigma$. Let 
    \begin{equation}
       0\leq  \alpha = \sigma - h(s_2), \quad \beta = \max\{|y_1(s)| \mid s_1 \le s \le s_2\}.
    \end{equation} 
    We then have
    \begin{align}
    \label{eq:2rho}
        2\rho & = y_2(s_2) - y_2(s_1)
        = \int_{s_1}^{s_2} \left(1 + y_1(s)\psi_2^\lambda(y(s))\right) v_y(s) \ ds \\
        & \le \int_{s_1}^{s_2} v_y(s)\,ds + \beta\sigma C_2
        = h(s_2) + \beta\sigma C_2
        = \sigma - \alpha + \beta\sigma C_2.
    \end{align}

We now claim that
\begin{equation}\label{eq:upper_bound_beta}
    \beta \le 3C_1\alpha.
\end{equation}
Let us write
\begin{equation}
\int_{s_1}^{s_2} y_1(s)^2\,ds = \int_{s_1}^{s_2} y_1(s)^2(1-v_y(s))\,ds
+ \int_{s_1}^{s_2} y_1(s)^2 v_y(s)\,ds = \mathcal{I}_1 + \mathcal{I}_2.
\end{equation}
Notice that
$\mathcal{I}_1 \le \beta^2\alpha$.
The second integral is 
\begin{align}
\mathcal{I}_2 &= \int_{s_1}^{s_2} y_1(s)^2 v_y(s) \frac{\eta_1^\lambda(y(s_1))}{\eta_1^\lambda(y(s_1))}\,ds \\ 
&= \int_{s_1}^{s_2} y_1(s)^2 v_y(s) \frac{\eta_1^\lambda(y(s_1))-\eta_1^\lambda(y(s))} {\eta_1^\lambda(y(s_1))}\,ds+ 
\int_{s_1}^{s_2} y_1(s)^2 v_y(s) \frac{\eta_1^\lambda(y(s))} {\eta_1^\lambda(y(s_1))}\,ds.
\end{align}
Using equation \eqref{eq:psi3} we rewrite
\begin{align}
0 & = \frac{y_3(s_2)-y_3(s_1)}{\eta_1^\lambda(y(s_1))} \\
& = \int_{s_1}^{s_2} y_1(s)^2 v_y(s) \frac{\eta_1^\lambda(y(s))} {\eta_1^\lambda(y(s_1))}\,ds
+ \sum_{i=3}^n \int_{s_1}^{s_2} y_1(s)y_i(s)v_y(s) \frac{\eta_i^\lambda(y(s))} {\eta_1^\lambda(y(s_1))}\,ds.
\end{align}

Then, we can rewrite
    \begin{equation}\label{eq:I21new}
\mathcal{I}_2= \int_{s_1}^{s_2} y_1(s)^2 v_y(s) \frac{\eta_1^\lambda(y(s_1))-\eta_1^\lambda(y(s))} {\eta_1^\lambda(y(s_1))}\,ds - \sum_{i=3}^n \int_{s_1}^{s_2} y_1(s)y_i(s)v_y(s) \frac{\eta_i^\lambda(y(s))} {\eta_1^\lambda(y(s_1))}\,ds.
\end{equation}

To estimate the sum term of \eqref{eq:I21new}, recall that for $i \ge 3$ the functions $y_i$ satisfy $\dot y_i = v_yy_1\psi_i^\lambda(y)$. Therefore, since $y_i(s_1)=0$, we have
\begin{equation} 
    |y_i(s)| \le C_2 \sigma^{1/2} \left(\int_{s_1}^{s_2} y_1(r)^2\,dr\right)^{1/2}, \quad\forall i\geq 3.
\end{equation}

Then
\begin{equation}
    \left| \int_{s_1}^{s_2} y_1(s)y_i(s)v_y(s) \frac{\eta_i^\lambda(y(s))} {\eta_1^\lambda(y(s_1))}\,ds \right| \le \frac{C_2C_3\sigma}{m} \int_{s_1}^{s_2} y_1(s)^2\,ds.
\end{equation}

For the first term in \eqref{eq:I21new}, notice that  $|\eta_1^\lambda(y(s_1))|\ge m$.  Since $\rho \le \rho_1$, and $y$ goes through a point of $\bar{C}(\varepsilon')$, it follows by Remark \ref{rk:tau12} that $y$ is entirely contained in $W$, and the bound $|\dot y(s)| \le C_1$ holds for almost all $s$. Moreover, since $s-s_1\le\sigma\le2\rho_2$ and  $|\dot y|\le C_1$ we have $|y(s_1)-y(s)|\le 2C_1\rho_2$, therefore $|\eta_1^\lambda(y(s_1))-\eta_1^\lambda(y(s))|\le m/4$ by Remark \ref{rk:tau12}. We can estimate 
\begin{equation}
    \left| \int_{s_1}^{s_2} y_1(s)^2 v_y(s) \frac{\eta_1^\lambda(y(s_1))-\eta_1^\lambda(y(s))} {\eta_1^\lambda(y(s_1))}\,ds \right| \le \frac14 \int_{s_1}^{s_2} y_1(s)^2\,ds,
\end{equation}
hence
\begin{equation}
    \mathcal{I}_2 \le \left(\frac14 + \frac{(n-2) C_2C_3\sigma}{m}\right) \int_{s_1}^{s_2} y_1(s)^2\,ds.
\end{equation}

Collecting all the terms, we get the bound
\begin{equation}
    \int_{s_1}^{s_2} y_1(s)^2\,ds \le \beta^2\alpha + \left(\frac14 + \frac{(n-2)C_2C_3\sigma}{m}\right) \int_{s_1}^{s_2} y_1(s)^2\,ds.
\end{equation}

Since $\sigma \le 2\rho_3 \le \frac{m}{4(n-2)C_2C_3}$, we have
\begin{equation}
    \int_{s_1}^{s_2} y_1(s)^2\,ds \le \beta^2\alpha + \frac12 \int_{s_1}^{s_2} y_1(s)^2\,ds,
\end{equation}
from which it follows that
\begin{equation}
    \int_{s_1}^{s_2} y_1(s)^2\,ds \le 2\beta^2\alpha.
\end{equation}

Let us now establish a lower bound on $\int_{s_1}^{s_2} y_1(s)^2\,ds $. 
Let $\bar s \in [s_1,s_2]$ be such that $|y_1(\bar s)|=\beta$. Since $y_1(s_1)= y_1(s_2)=0$, and $|\dot y_1 (s)|\le C_1$ for a.e.\ $s\in [s_1,s_2]$, we have $\bar s - s_1 \ge \beta/C_1$ and $s_2 - \bar s \ge \beta/C_1$. Hence the intervals $I_1=[\bar s-\beta/C_1,\bar s]$ and $I_2=[\bar s,\bar s+\beta/C_1]$ are entirely contained in $[s_1,s_2]$. On each of these intervals $I_j$, $|y_1(s)|$ is bounded below by the linear function $\lambda_j$ which is equal to $\beta$ at $\bar s$ and to zero at the other endpoint. With a direct computation we have then
\begin{equation}
    \int_{s_1}^{s_2} y_1(s)^2\,ds \ge \int_{I_1} \lambda_1^2 + \int_{I_2} \lambda_2^2 = \frac{2\beta^3}{3C_1}.
\end{equation}  
Combining the upper and lower bounds on $\int_{s_1}^{s_2} y_1(s)^2\,ds$ we obtain
\begin{equation}
    \frac{2\beta^3}{3C_1} \le 2\beta^2\alpha.
\end{equation}
Therefore \eqref{eq:upper_bound_beta} holds true.

Finally, recalling \eqref{eq:2rho}, we have
\begin{equation}
    2\rho \le \sigma - \alpha + \beta\sigma C_2 \le \sigma - \alpha + 3C_1C_2\sigma\alpha = \sigma + (3C_1C_2\sigma - 1)\alpha.
\end{equation}

Since $\sigma \le 2\rho \le 2 \rho_4 \le 2/(7C_1C_2)$, we have $3C_1C_2\sigma - 1<0$. Recall that $\alpha \geq 0$ by definition. If $\alpha \neq 0$, the above computations would imply that, as soon as $\sigma \le 2\rho$, then $2\rho<\sigma$, which is impossible. Thus $\alpha=0$, i.e. $v_y(s) = 1$ almost everywhere. But then $u_y(s) = 0$ almost everywhere, and $y(s)= x^\lambda( s-s_1-\rho)$. This concludes the proof.$\qed$
\end{subsection}

\end{subsection}

\bibliographystyle{abbrvnat}
\bibliography{bibliography}
\end{document}